\documentclass[11pt]{amsart}
\usepackage[utf8]{inputenc}
\usepackage{amsmath,amssymb,amsthm}
\usepackage{xcolor}
\usepackage[colorlinks=true,citecolor=blue,linkcolor=blue]{hyperref}
\usepackage[none]{hyphenat}

\usepackage[letterpaper, margin=2.5cm]{geometry}

\newtheorem{theorem}{Theorem}[section]
\newtheorem{proposition}[theorem]{Proposition}
\newtheorem{lemma}[theorem]{Lemma}
\newtheorem{corollary}[theorem]{Corollary}
\theoremstyle{definition}
\newtheorem{definition}[theorem]{Definition}
\theoremstyle{remark}
\newtheorem{remark}[theorem]{Remark}
\newtheorem{example}[theorem]{Example}

\newcommand{\M}{M}
\newcommand{\R}{\mathbb{R}}
\newcommand{\Hy}{\mathbb{H}}
\newcommand{\dg}{\Delta_g}
\newcommand{\Hess}{\operatorname{Hess}}

\title[Busemann profiles on Cartan--Hadamard manifolds]{Rigidity and existence
of Busemann profiles for the Allen--Cahn equation on Cartan--Hadamard manifolds}

\author{Luis Eduardo Osorio-Acevedo}
\author{\'Alvaro Jaramillo}
\address{Department of Mathematics \\
Technological University of Pereira \\
Barrio Álamos, Pereira, Risaralda, Colombia \\
Postal Code: 660003}
\email{leosorio@utp.edu.co}
\email{aljamil@utp.edu.co}

\begin{document}

\begin{abstract}
We study entire solutions of the Allen--Cahn equation
$\Delta_g u = W'(u)$ on Cartan--Hadamard manifolds of the form
$u = U\circ b$, where $b$ is a Busemann function and $U\in C^2(\mathbb{R})$;
we call these Busemann profiles. Our main result is a rigidity principle
requiring no homogeneity of the manifold: if $\inf \Delta_g b>0$ and $U$ has
finite limits $\ell_\pm$, then $W(\ell_-)\le W(\ell_+)$, with equality
exactly when $U$ is constant. In particular no nonconstant Busemann layer
joins the two wells of a balanced double-well potential, in any dimension
$n\ge 2$; under $K\le-\kappa_2<0$ the hypothesis is automatic by Hessian
comparison, and if $W'$ is locally Lipschitz it holds under the mere
nonvanishing of $\Delta_g b$. The mechanism is a dissipation identity for the
Hamiltonian $\tfrac12(U')^2-W(U)$ along the flow lines of $\nabla b$, in
which the mean curvature of the horospheres acts as a wave speed.
Conversely, when $\Delta_g b\equiv c$ and $W$ is bistable and unbalanced,
strictly increasing Busemann fronts joining the wells exist precisely when
$c$ is the Fife--McLeod speed; their interfaces are then horospheres of
constant mean curvature. On warped-line and rotational surfaces we test
sharpness: monotone warpings admit no equal-level profile and rotational
surfaces no radial two-well profile, while an even convex warping carries a
nonconstant monotone layer whose nodal set is the central geodesic leaf.
\end{abstract}

\maketitle

\section{Introduction}\label{sec1}

The De Giorgi conjecture asks whether every bounded entire solution of
$\Delta u=u^3-u$ on $\mathbb R^n$ that is strictly monotone in one
direction must be one-dimensional, at least for $n\le8$
\cite{DeGiorgi1979}. It was proved in its original form for $n=2$ by
Ghoussoub and Gui \cite{GhoussoubGui1998} and for $n=3$ by Ambrosio and
Cabr\'e \cite{AmbrosioCabre2000}, building on Liouville-type techniques of
\cite{BerestyckiCaffarelliNirenberg1997}. Savin proved it for $4\le n\le8$
under the additional assumption $\lim_{x_n\to\pm\infty}u(x',x_n)=\pm1$
\cite{Savin2009}; without this assumption these dimensions remain open. For
$n\ge9$ the conjecture is false \cite{delPinoKowalczykWei2011}. (The
asymptotic condition of \cite{Savin2009} is automatically satisfied by
travelling-wave-type profiles such as those studied below, where limits at
$\pm\infty$ are prescribed from the outset.)

More generally, the geometric relation between phase-transition layers and
minimal or constant-mean-curvature hypersurfaces on Riemannian manifolds has
been developed in several directions; see, for instance,
\cite{PacardRitore2003}. In negatively curved settings, hyperbolic space has
served as a natural testing ground for these phase-transition and interface
questions from several points of view. A complementary parabolic viewpoint
was developed by Matano, Punzo, and Tesei~\cite{MatanoPunzoTesei2015}, who
studied front propagation for KPP and Allen--Cahn nonlinearities on
$\Hy^n$. They introduced \emph{horospheric waves}, whose level sets are
moving horospheres. In the upper half-space model these solutions have the
form
\[
u(x,t)=q^*(-\log x_n-c^*t), \qquad c^*=c_0-(n-1),
\]
where $c_0$ is the one-dimensional front speed. Thus the constant
horospherical mean-curvature term $n-1$ appears explicitly as a geometric
drift modifying the propagation speed. Pisante and Ponsiglione studied
phase-transition minimizers and their relation to the asymptotic Plateau
problem in $\Hy^n$ \cite{PisantePonsiglione2011}, while Mazzeo and
S\'aez constructed multiple-layer solutions with nodal set close to a
prescribed, widely separated collection of hyperplanes \cite{MS2014}.

On hyperbolic space this was taken up by Birindelli and Mazzeo
\cite{BirindelliMazzeo2009}, who studied solutions invariant under the three
cohomogeneity-one subgroups of $\operatorname{Isom}(\Hy^n)$ and, in each case,
analyzed existence or nonexistence of layers joining the wells; separately, under
decay hypotheses, they showed that solutions with invariant asymptotic data are
themselves invariant. Their arguments use the isometry group throughout: the
reduced equation is autonomous or explicitly computable precisely because the
ambient space is homogeneous.

This paper asks what survives when homogeneity is removed and the curvature is
not necessarily constant. Let $(\mathcal M^n,g)$ be a Cartan--Hadamard
manifold, let $\xi\in\partial_\infty\mathcal M$ be an ideal boundary point,
and let $b_\xi$ be the associated Busemann function. The horospheres at
$\xi$ are the level sets of $b_\xi$, and $\Delta_g b_\xi$ is their mean
curvature. We study solutions of the form $u=U\circ b_\xi$, which we call
\emph{Busemann profiles}: functions constant on each horosphere. On $\Hy^n$
these are exactly the solutions invariant under a parabolic subgroup, but the
definition needs no isometries, only the Busemann function itself.

The reduction is elementary but, in the absence of horospherical
homogeneity, is naturally pointwise. Since $|\nabla b_\xi|\equiv1$,
\[
\Delta_g(U\circ b_\xi)(x)
=
U''(b_\xi(x)) + U'(b_\xi(x))\,\Delta_g b_\xi(x).
\]
Hence a profile $u=U\circ b_\xi$ solves $\Delta_g u=W'(u)$ if and only if
\[
U''(b_\xi(x)) + U'(b_\xi(x))\,\Delta_g b_\xi(x)
=
W'\bigl(U(b_\xi(x))\bigr)
\qquad\text{for every }x\in\mathcal M.
\]
Write $\Sigma_t:=b_\xi^{-1}(t)$ for the horospheres at $\xi$. For
$p\in\Sigma_0$, let $\varphi_p$ be the integral curve of
$\nabla b_\xi$ through $p$, so that $b_\xi(\varphi_p(t))=t$. Along this
curve the equation becomes
\[
U''(t) + \alpha_p(t)\,U'(t) = W'(U(t)), \qquad
\alpha_p(t):=\Delta_g b_\xi(\varphi_p(t)).
\]
Different flow lines may in general produce different friction coefficients
$\alpha_p$. If $\Delta_g b_\xi$ is constant on each horosphere,
$\alpha_p$ is independent of $p$ and the reduction becomes a single
nonautonomous ODE; in the constant-drift case $\Delta_g b_\xi\equiv c$, it
reduces further to the classical travelling-wave equation with constant
speed $c$. Conversely, a nonconstant profile forces such leaf-constancy
\emph{a posteriori} wherever $U'\neq0$ (Remark~\ref{rem:consistency}): the
ansatz is self-limiting rather than automatically reducible.

Our main result is a rigidity statement that requires no horospherical
constancy of $\Delta_g b_\xi$, only the uniform positive lower bound
\[
c:=\inf_{\mathcal M}\Delta_g b_\xi>0.
\]
Along every flow line this yields $\alpha_p(t)\ge c$, which is all that the
dissipation argument needs.

The mechanism is the dissipation identity for the Hamiltonian
$H=\tfrac12(U')^2-W(U)$. Along any chosen flow line one has
\[
H'=-\alpha_p(t)(U')^2.
\]
The proof uses only the uniform estimate $\alpha_p(t)\ge c>0$, inherited
from $\inf_{\mathcal M}\Delta_g b_\xi>0$; no constancy of the coefficient
is assumed as a hypothesis of the theorem.

The converse direction is equally geometric. When $\Delta_g b_\xi\equiv c$ is
constant and $W$ is a bistable potential satisfying the hypotheses of
Proposition~\ref{prop:compatibility-front}, with $W(-1)<W(1)$, a strictly
increasing Busemann front joining the wells exists exactly when $c$ equals
the Fife--McLeod speed $\tau^*(W')$ (Proposition~\ref{prop:compatibility-front});
the interfaces are then exact CMC horospheres. The constant-drift geometric
hypothesis is not vacuous: it holds on hyperbolic space, on harmonic
manifolds, and on the nonsymmetric Damek--Ricci spaces. Between the two
extremes lies the heterogeneous regime, where $\Delta_g b_\xi$ depends on
$b_\xi$ alone; Proposition~\ref{prop:dictionary} records the resulting
dictionary between variable-curvature geometry and nonautonomous friction.

Section~\ref{sec:warped} tests the sharpness of these results on surfaces,
where warped metrics $g=dt^2+f(t)^2\,ds^2$ make everything explicit. There
the uniform drift hypothesis can be weakened to $a=f'/f\ge0$ with
$a\not\equiv0$ (Theorem~\ref{thm:translational}), though within that class
$t$ need not be a Busemann function, so this does not by itself extend
Theorem~\ref{thm:main} within the Busemann class; rotational surfaces admit
no radial two-well profiles, for a different reason, namely the boundary
condition at the pole (Theorem~\ref{thm:radial}); and a reflection-symmetric
geodesic leaf carries an odd monotone layer (Theorem~\ref{thm:existence}).
The last of these lies outside the Busemann-profile setting of
Theorem~\ref{thm:main}: there the coordinate $t$ is signed distance to a
geodesic, not a Busemann function, and $a(t)=\tanh t$ changes sign, impossible
for a genuine Busemann function, whose Laplacian is always nonnegative
(Proposition~\ref{prop:flow-geodesic}). The example shows that sign-changing
drift can support nonconstant equal-level layers in this broader warped
setting, but it is not itself a sharpness example within the Busemann class.

Some nonvanishing of the drift is, however, indispensable even within that
class: in $\mathbb R^n$, affine Busemann functions satisfy $\Delta b\equiv0$,
and the classical balanced heteroclinic yields a nonconstant profile
$u=U\circ b$. That degenerate case turns out to be the only obstruction. If
$W'$ is locally Lipschitz, in particular if $W\in C^2$, the same dissipation
argument combined with Cauchy uniqueness shows that the strict lower bound
$c>0$ may be relaxed all the way to $\Delta b\not\equiv0$, which already
forces an equal-level Busemann profile to be constant
(Corollary~\ref{cor:nonnegative-drift}); and $\Delta b\equiv0$ occurs exactly
when $\mathcal M$ splits off a flat line (Remark~\ref{rem:dichotomy}). What
remains open is the borderline $C^1$ regularity of Theorem~\ref{thm:main},
where Cauchy uniqueness may fail.

The paper is organized as follows. Section~\ref{sec:busemann} collects the
properties of Busemann functions we use: regularity, the gradient identity, the
geodesic character of the flow, the second fundamental form of horospheres, and
the invariant reduction $\Delta(U\circ b)=U''+U'\,\Delta b$
(Lemma~\ref{lem:invariant}). We also include a proof of the comparison bounds
for $\Delta b$ by a distributional limit argument. This material is standard,
but we have recorded it in the sign convention used throughout, since the
direction in which $\Delta_g b_\xi$ is positive is exactly what the main
theorem is about. Section~\ref{sec:main} proves the rigidity theorem and, in
the constant-drift, unbalanced regime, an exact-front compatibility criterion
for strictly increasing Busemann fronts. Section~\ref{sec:warped} specializes
to surfaces, where the nonexistence hypotheses can be relaxed and the layer
across a symmetric geodesic leaf is constructed by shooting, and its
stability is discussed. Section~\ref{sec:remarks} discusses the energy of
these layers and open problems.

Two limitations should be stated at the outset. The results concern the
profile class $u=U\circ b_\xi$ and do not classify arbitrary bounded entire
solutions; removing the ansatz, as the symmetry theorems of
\cite{BirindelliMazzeo2009} do on $\Hy^n$, would require tools that the
absence of an isometry group leaves unavailable. A recent preprint of
Cavalcante, Espinar, and Mar\'in \cite{CavalcanteEspinarMarin2026a} takes a
complementary route on this point: working directly with the asymptotic
Dirichlet problem on Cartan--Hadamard manifolds, and without any invariance
ansatz, they obtain non-existence of bounded solutions to $\Delta u+f(u)=0$
with prescribed asymptotic boundary data, via convex barrier hypersurfaces
that substitute for the totally geodesic foliations available on $\Hy^n$; it
remains open how their approach interacts with the profile-based rigidity
obtained here. Moreover, the existence result covers warped-line surfaces
with even warping; the case of a convex warping with an isolated interior
minimum but without reflection symmetry about the corresponding geodesic
leaf is not settled here, so we claim no criterion characterizing when
layers exist.

\section{Busemann functions on Cartan--Hadamard manifolds}
\label{sec:busemann}

Throughout this work, let $(\mathcal M^n, g)$, $n\ge 2$, denote a \emph{smooth
Cartan--Hadamard manifold}, i.e., a complete, simply connected Riemannian
manifold with nonpositive sectional curvature ($K\le 0$). By the
Cartan--Hadamard theorem, the exponential map $\exp_p : T_p\mathcal M \to
\mathcal M$ is a diffeomorphism for every $p\in\mathcal M$; consequently,
$\mathcal M$ is diffeomorphic to $\mathbb R^n$.

Fix a base point $o\in\mathcal M$. For each ideal boundary point $\xi \in
\partial_\infty \mathcal M$, let
\[
\gamma_\xi : [0,\infty) \to \mathcal M, \qquad \gamma_\xi(0)=o,
\]
be the unique unit-speed geodesic ray emanating from $o$ with asymptotic
endpoint $\xi$. The \emph{Busemann function} associated to $\xi$ (and
normalized by $o$) is defined by the pointwise limit
\begin{equation}
\label{eq:busemann_def_revised}
b_\xi(x) \;:=\; \lim_{t\to\infty} \big( d(x, \gamma_\xi(t)) - t \big), \qquad x\in\mathcal M.
\end{equation}

The existence of the limit is a direct consequence of the triangle
inequality. Indeed, for $0\le s \le t$, we have
\[
d(x,\gamma_\xi(t)) - t \;\le\; d(x,\gamma_\xi(s)) + d(\gamma_\xi(s),\gamma_\xi(t)) - t
= d(x,\gamma_\xi(s)) - s,
\]
so the function $t\mapsto d(x,\gamma_\xi(t))-t$ is nonincreasing. Moreover, by
the reverse triangle inequality,
\[
d(x,\gamma_\xi(t)) \;\ge\; d(\gamma_\xi(0),\gamma_\xi(t)) - d(x,\gamma_\xi(0))
= t - d(x,o),
\]
hence $d(x,\gamma_\xi(t))-t \ge -d(x,o)$. Thus the limit in
\eqref{eq:busemann_def_revised} is finite for every $x$.

A direct evaluation along the defining ray yields the crucial normalization
\begin{equation}
\label{eq:busemann_sign_revised}
b_\xi(\gamma_\xi(s)) = -s \qquad \text{for all } s\ge 0.
\end{equation}

We adopt Eberlein's sign convention \cite[1.10.1]{Eberlein1996}, so that
$b_\xi(\gamma_\xi(t))=-t$ along geodesic rays; with this convention $b_\xi$ is
convex, and hence $\mathrm{Hess}\,b_\xi\ge0$, the sign used throughout the
comparison arguments below. With this convention, the gradient of $b_\xi$ is
given by
\begin{equation}
\label{eq:grad_busemann_revised}
\nabla b_\xi (x) = -\dot{\gamma}_{x\to \xi}(0),
\end{equation}
where $\gamma_{x\to\xi}$ denotes the geodesic ray from $x$ to $\xi$.
In particular, $b_\xi$ is a $C^2$-function (cf. \cite[Prop.~3.1]{HeintzeImHof1977} or
\cite[1.10.2]{Eberlein1996}). The horospheres $\Sigma_t:=b_\xi^{-1}(t)$
are thus $C^2$ convex hypersurfaces, with the non-negative Hessian ensuring
the convexity of the horoballs $B_\xi(t):=b_\xi^{-1}\bigl((-\infty,t]\bigr)$.

\begin{lemma}[Convexity and Lipschitz bound]\label{lem:convexity-lipschitz}
The Busemann function $b_\xi$ is convex and $1$-Lipschitz.
\end{lemma}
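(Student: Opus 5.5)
Both properties come from the defining limit \eqref{eq:busemann_def_revised}. Write $b_\xi = \lim_{t\to\infty} f_t$ with
\[
f_t(x) := d(x,\gamma_\xi(t)) - t ,
\]
and show that each $f_t$ is $1$-Lipschitz and convex. Both properties pass to pointwise limits, so they hold for $b_\xi$.

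For the Lipschitz bound, the triangle inequality gives
\[
|d(x,\gamma_\xi(t)) - d(y,\gamma_\xi(t))| \le d(x,y)
\]
for every $t$, hence $|f_t(x)-f_t(y)|\le d(x,y)$. Letting $t\to\infty$, which is legitimate since the limit exists and is finite as shown above, we get $|b_\xi(x)-b_\xi(y)|\le d(x,y)$.

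For convexity, the key input is the classical fact that on a Cartan--Hadamard manifold the distance function $x\mapsto d(x,q)$ to a fixed point $q$ is convex along every geodesic. I would justify this via the comparison (CAT(0)) inequality. Alternatively, use the Hessian comparison $\Hess\, r \ge 0$ for $r=d(\cdot,q)$ away from $q$: Rauch comparison with $K\le 0$ shows that Jacobi fields along radial geodesics grow at least linearly, so the second fundamental form of distance spheres is nonnegative. Near $q$ itself, $r$ is convex because it is a norm in normal coordinates to leading order, or more cleanly because $r = \sqrt{r^2}$ with $r^2$ convex. Then, for any geodesic $\sigma:[0,1]\to\M$ and $s\in[0,1]$,
\[
f_t(\sigma(s)) \le (1-s)\,f_t(\sigma(0)) + s\,f_t(\sigma(1)),
\]
since subtracting the constant $t$ does not affect convexity. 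Passing to the limit $t\to\infty$ pointwise at the three points $\sigma(0)$, $\sigma(s)$, $\sigma(1)$ preserves the inequality, so $b_\xi$ is convex.

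The main obstacle is the convexity of $d(\cdot,q)$ across the point $q$, where $r$ is not smooth. I would avoid differential arguments there by using the CAT(0) property directly. For a geodesic $\sigma$ and a point $q$, compare the triangle $q,\sigma(0),\sigma(1)$ with its Euclidean comparison triangle. The CAT(0) inequality gives
\[
d(q,\sigma(s)) \le |\bar q - \bar\sigma(s)|,
\]
and the right-hand side is convex in $s$ because the Euclidean norm is convex. Equality holds at the endpoints, so $d(q,\sigma(s))$ lies below the chord, which is exactly the convexity inequality. Since the paper also cites \cite[1.10.1]{Eberlein1996}, one may simply invoke it for this standard fact.
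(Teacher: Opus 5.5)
Your proposal is correct and takes essentially the paper's route: approximate $b_\xi$ by $f_t=d(\cdot,\gamma_\xi(t))-t$, get the $1$-Lipschitz bound from the triangle inequality and convexity from the convexity of $d(\cdot,q)$ on a Cartan--Hadamard manifold, then pass to the pointwise limit. The paper simply takes that convexity of $d(\cdot,q)$ as known, and your CAT(0) comparison-triangle argument is a valid justification; your parenthetical ``$r=\sqrt{r^2}$ with $r^2$ convex'' would not work on its own (the square root of a convex function need not be convex), but you do not rely on it, since the CAT(0) route, or the observation that $r\circ\sigma(s)=|\sigma'|\,|s-s_0|$ along geodesics through $q$, already settles that point.
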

\begin{proof}
Set $f_t(x):=d(x,\gamma_\xi(t))-t$, so that $f_t \downarrow b_\xi$ pointwise
by the monotonicity established above.

On a Cartan--Hadamard manifold the distance function $d(\cdot,q)$ is convex
for every $q$, hence so is each $f_t$. Let $\gamma_{xy}:[0,1]\to\mathcal M$
be the geodesic with $\gamma_{xy}(0)=x$, $\gamma_{xy}(1)=y$. For
$\lambda\in[0,1]$,
\[
f_t(\gamma_{xy}(\lambda)) \le (1-\lambda) f_t(x)+\lambda f_t(y),
\]
and letting $t\to\infty$ gives
\[
b_\xi(\gamma_{xy}(\lambda)) \le (1-\lambda) b_\xi(x)+\lambda b_\xi(y).
\]
To see that it is Lipschitz, the reverse triangle inequality gives
$|f_t(x)-f_t(y)|\le d(x,y)$ for every $t\ge 0$.
Letting $t\to\infty$ yields $|b_\xi(x)-b_\xi(y)|\le d(x,y)$.
\end{proof}

\begin{proposition}[Regularity and gradient]\label{prop:gradient-norm}
The Busemann function $b_\xi$ is of class $C^2$ on $\mathcal M$.\footnote{This
is stated for a general Cartan--Hadamard manifold in
\cite[Prop.~3.1]{HeintzeImHof1977} (attributed there to Eberlein), whose
proof invokes a lower curvature bound only on a compact neighbourhood, where
it is automatic. Some secondary sources record this regularity under pinched
negative curvature instead; that hypothesis belongs to the stronger
statement $Z\in C^2$ (i.e.\ $b_\xi\in C^3$), which we do not use.} Moreover,
if $\sigma_p:[0,\infty)\to\mathcal M$ denotes the unique unit-speed geodesic
ray starting at $p$ that is asymptotic to $\gamma_\xi$, then
\[
\nabla b_\xi(p)=-\sigma_p'(0).
\]
In particular, $|\nabla b_\xi(p)|=1$ for every $p\in\mathcal M$.
\end{proposition}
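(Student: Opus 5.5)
The plan is to first prove that $b_\xi$ is $C^1$ with the stated gradient, using the convexity and $1$-Lipschitz bound of Lemma~\ref{lem:convexity-lipschitz}, and then to obtain $C^2$ regularity from the asymptotic behaviour of the distance Hessians $\Hess f_t$, where $f_t(x)=d(x,\gamma_\xi(t))-t$.

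\emph{The gradient.} Fix $p$ and let $\sigma_p$ be the ray from $p$ asymptotic to $\gamma_\xi$. Asymptotic rays stay at bounded distance, so $b_\xi(\sigma_p(s))=b_\xi(p)-s$. For the lower bound, compare $f_t$ at $\sigma_p(s)$ and $p$ and pass to the limit. For the upper bound, use that $b_\xi$ is $1$-Lipschitz, so $b_\xi$ decreases with unit rate along $\sigma_p$, the steepest possible rate. For the upper support at $p$, let $q_t$ be the direction at $p$ of the geodesic from $p$ to $\gamma_\xi(t)$. Then $f_t$ is smooth near $p$ with gradient $-q_t$, and $q_t\to\sigma_p'(0)$ by the construction of asymptotic rays in Cartan--Hadamard manifolds. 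Since $b_\xi\le f_t - (f_t(p)-b_\xi(p))$ and the correction tends to $0$, we get
\[
b_\xi(\exp_p v)\le b_\xi(p)-\langle \sigma_p'(0),v\rangle+o(|v|)+\varepsilon_t|v|,
\]
provided the Hessians of $f_t$ are uniformly bounded near $p$. This bound holds because $d(\cdot,q)$ has Hessian controlled by $\coth$-type comparison with a local lower curvature bound, uniformly once $d(p,q)\ge1$. A convex function with an upper support function that is differentiable at $p$ is itself differentiable at $p$, so $\nabla b_\xi(p)=-\sigma_p'(0)$. Continuity of $p\mapsto\sigma_p'(0)$ follows from uniqueness and stability of asymptotic rays, which gives $C^1$ and $|\nabla b_\xi|=1$.

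\emph{The $C^2$ regularity.} This is the main obstacle. Following Heintze--Im Hof, I would write $\Hess f_t(p)$ restricted to $q_t^\perp$ as $U_t(0)$, where $U_t$ solves the Riccati equation $U'+U^2+R=0$ along the geodesic toward $\gamma_\xi(t)$ with $U_t\to+\infty$ at the endpoint. Using $K\le0$, the family $U_t$ is monotone in $t$: it is decreasing by the comparison principle for Riccati equations, and it is bounded below by $0$. It therefore converges to the stable Riccati solution $U_\xi$ along $\sigma_p$. Comparison with a lower curvature bound $-\Lambda^2$ on a compact neighbourhood gives local uniform bounds. A Riccati-equation Arzel\`a--Ascoli argument, together with continuous dependence on the initial geodesic, then shows that $\Hess f_t\to S$ locally uniformly. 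Here $S$ is continuous, and it equals $U_\xi$ on $\nabla b_\xi^\perp$ and $0$ in the radial direction. Since $\nabla f_t\to\nabla b_\xi$ locally uniformly (from the first step together with the uniform Hessian bound), the uniform convergence of second derivatives yields $b_\xi\in C^2$ with $\Hess b_\xi=S$. The delicate point is proving that $U_t\to U_\xi$ uniformly in the base point rather than just pointwise. I would handle it with Dini's theorem, which applies because the convergence is monotone and the limit $U_\xi(p)$ is continuous in $p$. That continuity comes from the fact that it is the monotone limit from above of the continuous solutions with initial data $U=+\infty$ at time $T$, combined with monotonicity in $T$ from below via the solutions with $U=0$ at $T$, which converge to the same limit under the local curvature bounds.
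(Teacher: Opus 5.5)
\textbf{The gradient step.} This part is sound in substance. It differs from the paper only in proving differentiability yourself, via convexity plus a differentiable upper support. The paper instead cites Heintze--Im Hof for $b_\xi\in C^2$. It then gets $b_\xi(\sigma_p(s))=b_\xi(p)-s$ because Busemann functions at the same ideal point differ by a constant, and concludes with $|\nabla b_\xi|\le1$ and the equality case of Cauchy--Schwarz.

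One line of yours is false: $b_\xi\le f_t-(f_t(p)-b_\xi(p))$. In $\mathbb R^2$ with $\gamma_\xi(t)=(t,0)$, $p=(0,1)$ and $x=0$ it reads $0\le t-\sqrt{t^2+1}$. You only need $b_\xi\le f_t$. With the uniform Hessian bound $C$ this gives $b_\xi(\exp_pv)\le f_t(p)-\langle q_t,v\rangle+C|v|^2$, and letting $t\to\infty$ at fixed $v$ gives the required quadratic upper support.

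\textbf{The gap in the $C^2$ step.} It sits exactly at the locally uniform convergence $\Hess f_t\to S$. The restriction of $\Hess f_t(x)$ to $\dot c_{x,t}(0)^\perp$ is the value at $0$ of the Riccati solution along the geodesic $c_{x,t}$ from $x$ to $\gamma_\xi(t)$, and this geodesic changes with $t$. Riccati comparison only orders solutions along one fixed geodesic, so it gives no monotonicity of $\Hess f_t(x)$ in $t$. In fact there is none in general: in the Euclidean example above, $\Hess f_t(p)(e_2,e_2)=t^2/(1+t^2)^{3/2}$, which increases on $(0,\sqrt2)$.

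Continuous dependence on the initial direction does not help either. The blow-up time $d(x,\gamma_\xi(t))$ tends to infinity, while $c_{x,t}\to\sigma_x$ only on compact parameter intervals. The monotone family to which your Dini argument does apply is $U_{x,T}(0)=\Hess d(\cdot,\sigma_x(T))|_x$ along $\sigma_x$. These are Hessians of a different function for each $x$, so their uniform convergence says nothing directly about the Hessians of one family of functions converging to $b_\xi$.

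\textbf{The missing idea: a truncation sandwich on the same geodesic.} The same device is what actually justifies your local Hessian bound.
\begin{itemize}
\item For $0<L<d(x,\gamma_\xi(t))$, let $U^{x,t}_L$ and $V^{x,t}_L$ be the Riccati solutions along $c_{x,t}$ that blow up at $L$, resp.\ vanish at $L$.
\item Since $0\le\Hess f_t<\infty$ at $c_{x,t}(L)$, and the Riccati flow preserves order in both time directions, you get $V^{x,t}_L(0)\le\Hess f_t(x)\le U^{x,t}_L(0)$. Here all three are viewed as forms on $T_x\mathcal M$ vanishing on $\dot c_{x,t}(0)$.
\item Both bounds depend only on $c_{x,t}|_{[0,L]}$. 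So for fixed $L$ they converge as $t\to\infty$ to $V_{x,L}(0)$ and $U_{x,L}(0)$ along $\sigma_x$, uniformly for $x$ in compact sets, since $\nabla f_t\to\nabla b_\xi$ locally uniformly.
\item Dini's theorem, applied to these two families, monotone in $L$ with common continuous limit $S$, then gives $\sup_x\|U_{x,L}(0)-V_{x,L}(0)\|\to0$ as $L\to\infty$. This yields $\Hess f_t\to S$ locally uniformly.
\end{itemize}

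Finally, your claim that $\lim_T V_{x,T}=\lim_T U_{x,T}$ is true, but it does not follow from local curvature bounds. It uses $K\le0$ along the whole ray. Both limits are nonnegative, so their Jacobi tensors have nonincreasing norms. In nonpositive curvature, Jacobi fields bounded on $[0,\infty)$ with the same initial value coincide, because the norm of their difference is convex, bounded and vanishes at $0$.
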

\begin{proof}
The $C^2$-regularity is classical; by \cite[Prop.~3.1]{HeintzeImHof1977}, the
radial field associated with an ideal point is $C^1$ and equals the
negative gradient of the corresponding Busemann function. Hence
$b_\xi\in C^2(\mathcal M)$ (see also \cite[1.10.2(1)]{Eberlein1996}).

Fix $p\in\mathcal M$. There is a unique unit-speed ray
$\sigma_p:[0,\infty)\to\mathcal M$ with $\sigma_p(0)=p$ asymptotic to
$\gamma_\xi$, i.e.\ $\sup_{t\ge 0} d(\gamma_\xi(t),\sigma_p(t))<\infty$
\cite[Prop.~II.8.2]{BridsonHaefliger1999}; equivalently, $\sigma_p$
represents the same ideal point $\xi$.

Let $b_\xi^{(p)}$ be the Busemann function built from $\sigma_p$. By
\eqref{eq:busemann_sign_revised} applied to $\sigma_p$,
\[
b_\xi^{(p)}(\sigma_p(s)) = -s \qquad (s\ge 0).
\]
Since $b_\xi$ and $b_\xi^{(p)}$ are Busemann functions at the same ideal
point, they differ by a constant \cite[1.10.2(3)]{Eberlein1996}; evaluating
at $p$ gives $b_\xi^{(p)} = b_\xi - b_\xi(p)$, hence
\[
b_\xi(\sigma_p(s)) = b_\xi(p) - s \qquad (s\ge 0).
\]
As $b_\xi$ is $C^1$, the right derivative at $s=0$ computes the
differential:
\[
\langle \nabla b_\xi(p),\, v\rangle = -1, \qquad v:=\sigma_p'(0),\ \ |v|=1 .
\]
On the other hand $b_\xi$ is $1$-Lipschitz
(Lemma~\ref{lem:convexity-lipschitz}), so $|\nabla b_\xi(p)|\le 1$.
Cauchy--Schwarz then gives
\[
1 = |\langle \nabla b_\xi(p), v\rangle| \le |\nabla b_\xi(p)|\,|v| \le 1,
\]
so equality holds throughout: $|\nabla b_\xi(p)|=1$ and the equality case
forces $\nabla b_\xi(p) = -v = -\sigma_p'(0)$. Since $p$ was arbitrary, the
proof is complete.
\end{proof}

All Busemann functions in the sequel are normalized at the fixed base point
$o$, so that $b_\xi(o)=0$ for every $\xi\in\partial_\infty\mathcal M$.

In the following, whenever the center $\xi$ is fixed and no confusion can
arise, we simply write $b:=b_\xi$.

\begin{proposition}[Geodesic gradient flow]\label{prop:flow-geodesic}
The following identities hold pointwise on $\mathcal M$,
\[
\Hess b(\nabla b,\cdot)\equiv 0,
\qquad
\Delta b = \sum_{i=2}^n \Hess b(e_i,e_i) \ge 0
\]
for any orthonormal frame $\{e_1=\nabla b,e_2,\dots,e_n\}$. In particular,
the integral curves of $\nabla b$ are unit-speed geodesics:
\[
\nabla_{\nabla b}\nabla b = 0.
\]
\end{proposition}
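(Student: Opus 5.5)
The plan is to differentiate the eikonal identity $|\nabla b|^2\equiv1$ from Proposition~\ref{prop:gradient-norm}, and to combine it with the convexity of $b$ from Lemma~\ref{lem:convexity-lipschitz}.

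The first step is the vanishing $\Hess b(\nabla b,\cdot)\equiv0$. Since $b\in C^2$, the function $|\nabla b|^2$ is $C^1$. For any vector field $X$ we then have
\[
0=X\bigl(|\nabla b|^2\bigr)=2\langle\nabla_X\nabla b,\nabla b\rangle=2\Hess b(X,\nabla b),
\]
using the symmetry of the Hessian. This also gives $\nabla_{\nabla b}\nabla b=0$: for every $X$,
\[
\langle\nabla_{\nabla b}\nabla b,X\rangle=\Hess b(\nabla b,X)=0.
\]
So the integral curves of $\nabla b$, which are $C^1$ curves of a continuous field, satisfy the geodesic equation with unit speed. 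If one worries about regularity, $\nabla b$ is only $C^1$, so the integral curves are $C^2$ and the covariant derivative makes sense. Alternatively, Proposition~\ref{prop:gradient-norm} already identifies each integral curve explicitly: the reversed rays $s\mapsto\sigma_p(-s)$ are unit-speed geodesics whose velocity equals $\nabla b$, by the identity $b(\sigma_p(s))=b(p)-s$ together with the uniqueness of integral curves of a $C^1$ field. I would present the identification with rays as the cleaner argument and keep the computation above as confirmation.

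The second step is the Laplacian formula. Take the trace of $\Hess b$ in an orthonormal frame with $e_1=\nabla b$. The term $\Hess b(e_1,e_1)$ vanishes by the first step, which leaves $\Delta b=\sum_{i\ge2}\Hess b(e_i,e_i)$.

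The third step is nonnegativity, and it needs $\Hess b\ge0$ pointwise. The paper asserts this from the convexity in Lemma~\ref{lem:convexity-lipschitz}, so I would derive it directly. For $v\in T_x\mathcal M$, let $c(s)=\exp_x(sv)$. Convexity of $b$ along geodesics makes $s\mapsto b(c(s))$ a convex $C^2$ function of $s$, so its second derivative at $0$ is nonnegative. That second derivative is $\Hess b(v,v)$, because $c$ is a geodesic. Hence each term $\Hess b(e_i,e_i)$ is nonnegative, and $\Delta b\ge0$.

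The main obstacle is only regularity bookkeeping, namely justifying that a $C^2$ function has a symmetric continuous Hessian and that one may differentiate $|\nabla b|^2$. This is automatic from $b\in C^2$, so I expect no real difficulty.
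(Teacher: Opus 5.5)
Your proposal is correct and follows the paper's proof essentially step for step. You differentiate $|\nabla b|^2\equiv1$ to get $\Hess b(\nabla b,\cdot)\equiv0$ and $\nabla_{\nabla b}\nabla b=0$, drop the $e_1$ term from the trace, and use convexity with $C^2$ regularity for $\Hess b\ge0$. That last step you justify more explicitly than the paper, via $(b\circ c)''(0)=\Hess b(v,v)$ along geodesics. If you lead with the ray identification instead, note that the rays $\sigma_p$ only give the flow line for $t\le0$ (this is the content of Proposition~\ref{prop:global}(ii)). The forward half still needs either your computation or the observation that each forward point's own ray passes back through $p$.
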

\begin{proof}
Differentiating $|\nabla b|^2\equiv 1$ along an arbitrary vector field $X$
gives
\[
0 = X|\nabla b|^2 = 2\langle \nabla_X\nabla b,\nabla b\rangle
= 2\,\Hess b(X,\nabla b),
\]
and by symmetry of the Hessian, $\Hess b(\nabla b,\cdot)\equiv 0$. Since
$\Hess b(\nabla b,X)=\langle \nabla_{\nabla b}\nabla b, X\rangle$ for all
$X$, non-degeneracy of $g$ yields $\nabla_{\nabla b}\nabla b=0$; thus the
integral curves of $\nabla b$ are geodesics parametrized by arc length.

In an orthonormal frame $\{e_1=\nabla b,e_2,\dots,e_n\}$ the first term of
the trace vanishes, so
\[
\Delta b = \sum_{i=1}^n \Hess b(e_i,e_i) = \sum_{i=2}^n \Hess b(e_i,e_i).
\]
Convexity of $b$ (Lemma~\ref{lem:convexity-lipschitz}) together with its
$C^2$-regularity gives $\Hess b\ge 0$ as a bilinear form, whence its trace
$\Delta b\ge 0$.
\end{proof}

\begin{corollary}[Second fundamental form of horospheres]\label{cor:horosphere}
Let $\Sigma=\{b=t\}$ be a horosphere, equipped with the outward unit normal
$N:=\nabla b$ (outward with respect to the horoball
$B_\xi(t)=b^{-1}\bigl((-\infty,t]\bigr)$), and define
\[
\mathrm{II}(X,Y):=\langle \nabla_X N, Y\rangle,\qquad X,Y\in T\Sigma.
\]
Then:
\begin{enumerate}
\item $\mathrm{II} = \Hess b|_{T\Sigma}$, and the mean curvature of $\Sigma$
with respect to $N$ is
\[
H_\Sigma = \operatorname{tr}_\Sigma \mathrm{II} = \Delta b \;\ge\; 0 .
\]
\item For any unit $T\in T\Sigma$, the normal curvature in the direction
$T$, defined by $k_g(T):=\langle \nabla_T T, N\rangle$, satisfies
\[
k_g(T) = -\Hess b(T,T).
\]
\item If $n=2$, then $T$ spans $T\Sigma$ and $k_g=-\Delta b$.
\end{enumerate}
The sign discrepancy between \textup{(1)} and \textup{(2)} is only a matter
of convention: $k_g(T)=-\mathrm{II}(T,T)$ by definition. For $n\ge 3$,
$k_g(T)$ depends on $T$ and is not determined by $\Delta b$ alone.
\end{corollary}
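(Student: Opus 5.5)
The plan is to reduce all three items to Proposition~\ref{prop:flow-geodesic}, using the fact that $N=\nabla b$ is a globally defined $C^1$ unit vector field extending the normal of $\Sigma$.

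For item (1), take $X,Y\in T\Sigma$ and extend them locally so that $\nabla_X N$ makes sense. This is legitimate because $N$ is defined on all of $\mathcal M$ and $\nabla_X N$ at a point depends only on $X$ there. Then $\mathrm{II}(X,Y)=\langle \nabla_X\nabla b,Y\rangle=\Hess b(X,Y)$ by the definition of the Hessian. The outward claim follows from $\langle \nabla b, N\rangle=1>0$, since $b$ increases in the direction of $N$ and the horoball is the sublevel set $\{b\le t\}$. For the mean curvature, choose an orthonormal frame $e_1=\nabla b,e_2,\dots,e_n$, so that $e_2,\dots,e_n$ span $T\Sigma$. Then $\operatorname{tr}_\Sigma\mathrm{II}=\sum_{i\ge2}\Hess b(e_i,e_i)$, and Proposition~\ref{prop:flow-geodesic} identifies this sum with $\Delta b\ge0$, because the term $\Hess b(e_1,e_1)$ vanishes.

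For item (2), let $T$ be a unit field tangent to $\Sigma$, extended tangentially to the level sets near the point. Differentiate the identity $\langle T,N\rangle\equiv0$ along $T$ to get $\langle\nabla_T T,N\rangle+\langle T,\nabla_T N\rangle=0$. Hence $k_g(T)=-\langle\nabla_T N,T\rangle=-\Hess b(T,T)=-\mathrm{II}(T,T)$. The only delicate point in the whole argument is here: $b\in C^2$ makes $N$ only $C^1$, so we must check that this differentiation is valid. For that I would let $T$ be the unit tangent of a $C^2$ curve in $\Sigma$, which exists because $\Sigma$ is a $C^2$ hypersurface, and differentiate along that curve. This requires only that $N$ be $C^1$. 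It also shows that $k_g(T)$ depends only on $T$ at the point, as the definition presumes.

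For item (3), when $n=2$ the tangent space $T\Sigma$ is one-dimensional and spanned by the unit vector $T$. Therefore $\Delta b=\Hess b(T,T)$ by item (1), and item (2) gives $k_g=-\Delta b$. For the closing remark about $n\ge3$, note that item (2) says $k_g(T)$ is the restricted quadratic form $-\Hess b|_{T\Sigma}$ evaluated at $T$. Its trace is $-\Delta b$, but its individual values depend on the eigenvalues of $\mathrm{II}$ and not on their sum alone.
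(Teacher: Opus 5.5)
Your proposal is correct and follows the paper's proof essentially step for step: (1) from $\mathrm{II}(X,Y)=\langle\nabla_X\nabla b,Y\rangle=\Hess b(X,Y)$ together with $\Hess b(\nabla b,\cdot)=0$ from Proposition~\ref{prop:flow-geodesic}, (2) by differentiating $\langle T,N\rangle=0$ along $T$, and (3) as the one-dimensional trace. Your extra care in differentiating along a $C^2$ curve in $\Sigma$, since $N$ is only $C^1$, is a sound refinement that the paper leaves implicit.
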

\begin{proof}
(1) The identity $\mathrm{II}=\Hess b|_{T\Sigma}$ is immediate from
$\mathrm{II}(X,Y)=\langle\nabla_X\nabla b,Y\rangle=\Hess b(X,Y)$. Since
$\Hess b(\nabla b,\cdot)=0$ by Proposition~\ref{prop:flow-geodesic}, the
trace over $T\Sigma$ agrees with the full trace, so $H_\Sigma=\Delta b$;
non-negativity is Proposition~\ref{prop:flow-geodesic}.

(2) Let $T$ be a unit tangent field to $\Sigma$. Differentiating
$\langle T,N\rangle=0$ along $T$ gives
\[
\langle \nabla_T T,N\rangle = -\langle T,\nabla_T N\rangle = -\Hess b(T,T).
\]

(3) For $n=2$ the adapted frame is $\{e_1=N,e_2=T\}$, so
$\Delta b=\Hess b(T,T)$ and the claim follows from (2).
\end{proof}

\begin{remark}[Orientation and the affine case]\label{rem:orientation}
With our sign convention every Busemann function is convex, hence
$\Delta b_\xi\ge 0$ pointwise. Suppose both $b_\xi$ and $-b_\xi$ were
Busemann functions. Convexity of the former gives $\Hess b_\xi\ge 0$ and
convexity of the latter gives $\Hess b_\xi\le 0$, so $\Hess b_\xi\equiv 0$
and $b_\xi$ is affine; equivalently, by Corollary~\ref{cor:horosphere}, all
horospheres at $\xi$ are totally geodesic.

In $\mathbb R^n$ this happens for every Busemann function:
$b_v(x)=-\langle x,v\rangle$ and $-b_v=b_{-v}$. In a general
Cartan--Hadamard manifold, however, $\Hess b_\xi\equiv 0$ does not force
flatness. Indeed, it says precisely that $\nabla b_\xi$ is a parallel unit
vector field, so by the de Rham decomposition theorem $\mathcal M$ splits
isometrically as
\[
\mathcal M \cong \mathbb R \times \mathcal N,
\]
with $b_\xi$ equal, up to an additive constant, to minus the projection onto
the first factor. The factor $\mathcal N$ is again Cartan--Hadamard but need
not be flat.

By contrast, in $\mathbb H^n$ every Busemann function satisfies
$\Delta b_\xi\equiv n-1$, so $-b_\xi$ is never a Busemann function there
(see Example~\ref{ex:H2}).
\end{remark}

We recall the following elementary but fundamental identity, the only tool
from this section needed in the proof of Theorem~\ref{thm:main}. Its virtue
is that it requires no coordinates and holds in any dimension.

\begin{lemma}[Invariant reduction identity]\label{lem:invariant}
Let $U\in C^2(\mathbb R)$. Then $u:=U\circ b\in C^2(\mathcal M)$ and,
pointwise on $\mathcal M$,
\begin{equation}\label{eq:invariant}
  \Delta u \;=\; U''(b)\;+\;U'(b)\,\Delta b .
\end{equation}
\end{lemma}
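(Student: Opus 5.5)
The identity follows from the chain rule for the Hessian, combined with $|\nabla b|\equiv 1$ from Proposition~\ref{prop:gradient-norm}. First, regularity: $b\in C^2(\mathcal M)$ by Proposition~\ref{prop:gradient-norm} and $U\in C^2(\mathbb R)$, so $u=U\circ b$ is $C^2$ as a composition of $C^2$ maps. In local coordinates the second derivatives of $u$ are
\[
\partial_i\partial_j u=U''(b)\,\partial_ib\,\partial_jb+U'(b)\,\partial_i\partial_j b ,
\]
and these are continuous. So $\Delta u$ is defined pointwise, not merely in a distributional sense.

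Next, compute the gradient and Hessian invariantly. By the chain rule, $\nabla u=U'(b)\nabla b$. For any vector fields $X,Y$,
\[
\Hess u(X,Y)=\langle\nabla_X\nabla u,Y\rangle
=U''(b)\,\langle X,\nabla b\rangle\langle \nabla b,Y\rangle+U'(b)\,\Hess b(X,Y).
\]
This uses only the Leibniz rule $\nabla_X(fV)=(Xf)V+f\nabla_XV$ with $f=U'\circ b$, which is $C^1$, and $V=\nabla b$, which is $C^1$. That is exactly the regularity available. In tensor form,
\[
\Hess u=U''(b)\,db\otimes db+U'(b)\Hess b .
\]

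Finally, take the trace with respect to $g$. The trace of $db\otimes db$ is $|\nabla b|^2$, which equals $1$ by Proposition~\ref{prop:gradient-norm}. The trace of $\Hess b$ is $\Delta b$. This gives $\Delta u=U''(b)+U'(b)\Delta b$. Equivalently, one can work in the adapted orthonormal frame $\{e_1=\nabla b,e_2,\dots,e_n\}$. There the first term contributes only through $e_1$, giving $U''(b)$, and Proposition~\ref{prop:flow-geodesic} shows that $\Hess b$ contributes only through $e_2,\dots,e_n$.

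There is no substantive obstacle here. The only delicate point is regularity: $b$ is only known to be $C^2$, not $C^3$, so every derivative taken must stay within second order. The computation above takes exactly two derivatives of $b$, so the identity holds classically at every point, in every dimension $n\ge2$.
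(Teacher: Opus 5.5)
Your proposal is correct and follows essentially the same route as the paper. The paper applies the product rule to $\operatorname{div}\bigl(U'(b)\nabla b\bigr)$ directly, whereas you compute $\Hess u=U''(b)\,db\otimes db+U'(b)\Hess b$ and then trace it; in both cases the key inputs are the chain rule, $b\in C^2$, and $|\nabla b|\equiv1$.
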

\begin{proof}
Since $b\in C^2(\mathcal M)$ and $U\in C^2(\mathbb R)$, $u$ is $C^2$, with
$\nabla u=U'(b)\nabla b$. Hence
\[
\Delta u=\operatorname{div}\!\big(U'(b)\nabla b\big)
= U''(b)|\nabla b|^2+U'(b)\,\Delta b
= U''(b)+U'(b)\,\Delta b,
\]
since $|\nabla b|\equiv 1$.
\end{proof}

For completeness we record the global structure underlying the ``Busemann
coordinates'' picture; the proof of the main theorem does not use it beyond
a single flow line.

\begin{proposition}[Global $C^1$ decomposition]\label{prop:global}
Let $\Sigma_0 := b_\xi^{-1}(0)$, and for each $x\in\mathcal M$, let
$\gamma_{x\to \xi} : [0,\infty) \to \mathcal M$ denote the unique unit-speed
geodesic ray from $x$ to the ideal point $\xi$, so that
$\gamma_{x\to \xi}(0)=x$ and $\dot{\gamma}_{x\to \xi}(0)$ points toward
$\xi$. Then:
\begin{enumerate}
  \item[\textup{(i)}] every level set $\Sigma_t := b^{-1}(t)$ is a nonempty,
  connected, properly embedded $C^2$-hypersurface;
  \item[\textup{(ii)}] the vector field $\nabla b_\xi$ is complete.
  Denoting its flow by $\Phi:\mathbb R\times\mathcal M\to\mathcal M$,
  $(t,x)\mapsto\Phi_t(x)$, each curve $t\mapsto\Phi_t(x)$ is a unit-speed
  geodesic and
\[
  b(\Phi_t(x))=b(x)+t \qquad\text{for all }(t,x)\in\mathbb R\times\mathcal M;
\]
moreover, for $t\le0$,
\[
  \Phi_t(x)=\gamma_{x\to\xi}(-t),
\]
so the flow line $t\mapsto\Phi_t(x)$ extends the ray $\gamma_{x\to\xi}$ to a
complete geodesic through $x$;
  \item[\textup{(iii)}] the restriction $\Phi|_{\mathbb R \times \Sigma_0}$
  is a $C^1$-diffeomorphism. Moreover, the leaves $\Sigma_t$ are orthogonal
  to the flow lines, and $\Phi^*g = dt^2 + h_t$, where $\{h_t\}$ is a
  continuous family of Riemannian metrics on $\Sigma_0$ (with $h_t$
  denoting the pullback of the induced metric on $\Sigma_t$).
\end{enumerate}
If $n=2$, then $\Sigma_0$ is diffeomorphic to $\mathbb R$ and has infinite
length.
\end{proposition}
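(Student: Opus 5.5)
I would organize the argument around the gradient field $Z:=\nabla b$, which is $C^1$ by Proposition~\ref{prop:gradient-norm}. Its integral curves are unit-speed geodesics by Proposition~\ref{prop:flow-geodesic}.

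\emph{Completeness and the flow identities (ii).} Since $|Z|\equiv1$ and $\mathcal M$ is complete, integral curves cannot escape in finite time, so the flow $\Phi$ is global and of class $C^1$. Along a flow line, $\frac{d}{dt}b(\Phi_t(x))=|\nabla b|^2=1$, which gives $b(\Phi_t(x))=b(x)+t$. Each flow line is a unit-speed geodesic by Proposition~\ref{prop:flow-geodesic}. For $t\le0$ I would compare two curves through $x$: the curve $s\mapsto\gamma_{x\to\xi}(s)$ and the flow line run backwards. By Proposition~\ref{prop:gradient-norm} applied at each point $\gamma_{x\to\xi}(s)$, the tangent of $\gamma_{x\to\xi}$ equals $-Z$ there. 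This works because the tail of an asymptotic ray from $x$ is again the asymptotic ray from $\gamma_{x\to\xi}(s)$, by uniqueness of asymptotic rays. So $\gamma_{x\to\xi}(s)$ solves $\dot c=-Z(c)$, and $C^1$ uniqueness gives $\Phi_{-s}(x)=\gamma_{x\to\xi}(s)$. Since the forward flow line is also a geodesic, the whole curve is one complete geodesic extending the ray.

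\emph{The diffeomorphism and level sets ((iii), then (i)).} I would define $F:\mathbb R\times\Sigma_0\to\mathcal M$ by $F(t,p)=\Phi_t(p)$.
\begin{itemize}
\item Surjectivity: given $x$, put $p=\Phi_{-b(x)}(x)\in\Sigma_0$; then $F(b(x),p)=x$.
\item Injectivity: if $F(t,p)=F(t',p')$, applying $b$ gives $t=t'$, and then $p=p'$ because each $\Phi_t$ is a bijection.
\item Inverse: $x\mapsto(b(x),\Phi_{-b(x)}(x))$ is $C^1$, since $b\in C^2$ and $\Phi$ is $C^1$ jointly in $(t,x)$.
\end{itemize}
Hence $F$ is a $C^1$-diffeomorphism, provided $\Sigma_0$ is a $C^2$ hypersurface. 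That holds by the regular value theorem, since $\nabla b\neq0$. The level sets are nonempty because $b(\Phi_t(x))$ ranges over all of $\mathbb R$. Each level set is closed, hence properly embedded. Each $\Sigma_t=\Phi_t(\Sigma_0)$ is homeomorphic to $\Sigma_0$.

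\emph{Connectedness and the metric.} Since $\mathcal M$ is connected and $\mathcal M\cong\mathbb R\times\Sigma_0$, $\Sigma_0$ is connected. Orthogonality of leaves and flow lines holds because $Z=\nabla b$ is normal to the level sets. Thus $F^*g(\partial_t,\partial_t)=1$, and $F^*g(\partial_t,X)=\langle Z,d\Phi_tX\rangle=0$ for $X$ tangent to $\Sigma_0$, because $d\Phi_tX$ is tangent to $\Sigma_t$. The metric therefore has the form $dt^2+h_t$ with $h_t=\Phi_t^*(g|_{\Sigma_t})$. This family is continuous in $t$ because $d\Phi_t$ is continuous.

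\emph{The case $n=2$.} Here $\Sigma_0$ is a connected one-manifold, so it is $\mathbb R$ or $S^1$. It cannot be $S^1$, since then $\mathcal M\cong\mathbb R\times S^1$ would not be simply connected. For infinite length, note that $\Sigma_0$ is properly embedded and diffeomorphic to $\mathbb R$, so both ends leave every compact set. An arc of finite length would have its ends converge, by completeness, to limit points in $\mathcal M$, which contradicts properness.

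\emph{Main obstacle.} The main technical point is the regularity bookkeeping. With $Z$ only $C^1$, the flow is $C^1$ but $d\Phi_t$ is only continuous, so I must avoid claiming more than $C^1$ for $F$ and more than continuity for $h_t$. I would also make sure uniqueness of integral curves is applied only where $Z$ is Lipschitz, which is guaranteed since $Z\in C^1$.
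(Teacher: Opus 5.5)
Your proof is correct. Its architecture matches the paper's: the flow of $\nabla b$, the identity $b(\Phi_t(x))=b(x)+t$, and the bijection $\mathbb R\times\Sigma_0\to\mathcal M$. Several sub-steps, however, are argued differently.

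\emph{The case $n=2$.} This is the most substantive difference. You exclude $\Sigma_0\cong S^1$ topologically, from $\mathcal M\cong\mathbb R\times\Sigma_0$ and simple connectivity. The paper instead invokes Jordan--Schoenflies to obtain a compact disk bounded by $\Sigma_0$. It then argues that the extrema of $b$ on that disk cannot be interior, since $b$ has no critical points, so $b\equiv0$ on the disk, a contradiction. Your argument is shorter and avoids a genuinely two-dimensional theorem. It also shows, in every dimension, that each leaf is simply connected; in fact each leaf is contractible, being a retract of $\mathcal M\cong\mathbb R^n$ under $x\mapsto\Phi_{-b(x)}(x)$. The price is that your argument needs (iii) first, while the paper's uses only $\mathcal M\cong\mathbb R^2$ and $\nabla b\neq0$.

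\emph{The diffeomorphism in (iii).} You exhibit the inverse $x\mapsto(b(x),\Phi_{-b(x)}(x))$ and read off its $C^1$ regularity from the joint $C^1$ regularity of the flow. The paper instead checks that $d\Phi$ is nonsingular on $\mathbb R\times\Sigma_0$ and appeals implicitly to the inverse function theorem. Your version is more transparent about the regularity of the inverse. It also yields connectedness of $\Sigma_0$ at no extra cost, since your map is the paper's retraction $r_0$.

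\emph{The ray identification.} You check at every point of $\gamma_{x\to\xi}$ that its velocity equals $-\nabla b$, using that tails of asymptotic rays are again asymptotic rays together with Proposition~\ref{prop:gradient-norm}, and then apply ODE uniqueness for $-\nabla b$. The paper matches initial velocities at $x$ only and uses uniqueness of geodesics. That is more economical, because Proposition~\ref{prop:flow-geodesic} already makes the flow lines geodesics.

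\emph{A small point to state.} Say explicitly that each $h_t$ is positive definite. This holds because $\Phi_t|_{\Sigma_0}:\Sigma_0\to\Sigma_t$ is a $C^1$ diffeomorphism with inverse $\Phi_{-t}|_{\Sigma_t}$, so its differential is injective.
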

\begin{proof}
(ii) By Proposition~\ref{prop:gradient-norm}, $\nabla b$ is a $C^1$ unit
vector field, and by Proposition~\ref{prop:flow-geodesic} its integral
curves are unit-speed geodesics. Since $(\mathcal M,g)$ is complete, it is
geodesically complete by the Hopf--Rinow theorem, so these integral curves
are defined for all $t\in\mathbb R$; thus $\nabla b$ is a complete vector
field, generating a global $C^1$ flow $\Phi_t$. Along any flow line,
\[
\frac{d}{dt}b(\Phi_t(x))
=\langle\nabla b(\Phi_t(x)),\dot\Phi_t(x)\rangle
=|\nabla b(\Phi_t(x))|^2=1,
\]
hence $b(\Phi_t(x))=b(x)+t$.

To identify $\Phi_t$ with $\gamma_{x\to\xi}$ for $t\le0$: both
$s\mapsto\Phi_{-s}(x)$ ($s\ge0$) and $s\mapsto\gamma_{x\to\xi}(s)$ are
unit-speed geodesics issued from $x$ with the same initial velocity, since
$\dot\Phi_{-s}(x)\big|_{s=0}=-\nabla b(x)=\dot\gamma_{x\to\xi}(0)$ by
Proposition~\ref{prop:gradient-norm}. By uniqueness of geodesics with given
initial point and velocity, the two curves coincide on their common domain
$[0,\infty)$; equivalently $\Phi_t(x)=\gamma_{x\to\xi}(-t)$ for $t\le0$.

(i) Since $|\nabla b| \equiv 1$, $b$ has no critical points. Hence each
$\Sigma_t = b^{-1}(t)$ is a $C^2$-embedded hypersurface; being closed in
$\mathcal M$, it is properly embedded. It is nonempty because
$t\mapsto b(\Phi_t(x))$ is onto $\mathbb R$ along each flow line. For
connectedness, define $r_t : \mathcal M \to \Sigma_t$ by
$r_t(x) := \Phi_{t-b(x)}(x)$. By (ii), this is continuous, and restricts to
the identity on $\Sigma_t$. Hence $\Sigma_t$ is the continuous image of the
connected space $\mathcal M$, so it is connected.

(iii) \emph{Injectivity:} if $\Phi_t(p)=\Phi_{t'}(p')$ with
$p,p'\in\Sigma_0$, applying $b$ gives $t=t'$; since each flow map $\Phi_t$
is a diffeomorphism with inverse $\Phi_{-t}$, it follows that $p=p'$.
\emph{Surjectivity:} $x=\Phi_{b(x)}(r_0(x))$.
\emph{Differential:} $d\Phi(\partial_t)=\nabla b$; for $v\in T_p\Sigma_0$,
differentiating $b(\Phi_t(q))=b(q)+t$ in $q$ gives
$db(d\Phi_t(v))=db(v)=0$, so $d\Phi_t(v)\perp\nabla b$, and
$d\Phi_t(v)\neq0$ since $\Phi_t$ is a diffeomorphism. Hence $d\Phi$ is
everywhere nonsingular and $\Phi$, being bijective, is a global
$C^1$-diffeomorphism. The same computation gives the orthogonality of the
leaves and, together with $g(\nabla b,\nabla b)=1$, the form
$\Phi^*g=dt^2+h_t$; since $\Phi$ is only $C^1$, the coefficients of $h_t$
are continuous in $t$.

Finally let $n=2$. A connected boundaryless $1$-manifold is diffeomorphic to
$\mathbb R$ or $S^1$. If $\Sigma_0$ were a circle it would bound a compact
disk $D\subset\mathcal M\cong\mathbb R^2$ (Jordan--Schoenflies), on which
the continuous function $b$ attains its extrema. Neither can be attained at
an interior point, where $\nabla b=0$ is impossible; hence both are
attained on $\partial D$, where $b\equiv 0$, forcing $b\equiv 0$ on $D$ and
again $\nabla b=0$ in the interior, a contradiction. Thus $\Sigma_0\cong
\mathbb R$. Its length is infinite: otherwise its arclength parametrization
would be defined on a bounded interval $(0,L)$ and be Cauchy as
$s\to L^-$, hence convergent by completeness, producing a limit point of
the end and contradicting properness.
\end{proof}

\begin{remark}[Coordinate form; regularity caveat]\label{rem:coordinates}
Along flow lines starting on $\Sigma_0$ the flow parameter $t$ of
Proposition~\ref{prop:global} coincides with $b$, and we write $b$ for it
below. When $b$ happens to be of class $C^3$ (in particular smooth, as on
$\mathbb H^n$ or in the models of Section~\ref{sec:warped}, where the
analogous coordinate is smooth \emph{by construction}), one may write
$\Phi^*g=db^2+h_b$ with a $b$-differentiable family of leaf metrics and
\[
  \Delta b = \partial_b\log\sqrt{\det h_b}\,;
\]
on surfaces, parametrizing $\Sigma_0$ by arclength $s$, one has
$h_b=m(b,s)^2\,ds^2$ with the \emph{horocyclic volume factor} $m$, and one
recovers the familiar coordinate form of the Laplacian
\[
\Delta u = u_{bb} + (\Delta b)\,u_b + \Delta_{\Sigma_b}u.
\]
For a general Cartan--Hadamard manifold, where only $b\in C^2$ is
guaranteed, we do not assert this coordinate form: by
Proposition~\ref{prop:global}(iii) the family $h_b$ is merely continuous in
$b$. All proofs below rely instead on Lemma~\ref{lem:invariant} and on
single flow lines, for which $C^2$ regularity suffices.
\end{remark}

\begin{example}[The upper half-space model]\label{ex:H2}
Let $\Hy^n=\{(x,y): x\in\mathbb R^{n-1},\,y>0\}$ with
$g=(|dx|^2+dy^2)/y^2$, of constant curvature $K\equiv-1$, and let
$\gamma(t)=(0,e^t)$ be the geodesic ray towards the ideal point $\infty$.
For a general point $(x,y)\in\Hy^n$, the hyperbolic distance to
$\gamma(t)$ is given by
\[
\cosh d\big((x,y),(0,e^t)\big) = 1 + \frac{|x|^2+(e^t-y)^2}{2ye^t},
\]
and $\operatorname{arccosh}(z)=\log(2z)+o(1)$ as $z\to\infty$, together with
\[
2\left(1+\frac{|x|^2+(e^t-y)^2}{2ye^t}\right)=\frac{|x|^2+y^2+e^{2t}}{ye^t},
\]
we get
\[
d\big((x,y),(0,e^t)\big)-t
=\log\!\left(\frac{|x|^2+y^2+e^{2t}}{y\,e^{2t}}\right)+o(1)
\;\xrightarrow[t\to\infty]{}\;\log\frac1y .
\]
Hence
\[
b(x,y) = -\log y,
\]
consistent with the normalization $b(\gamma(t))=-t$.

Setting $s:=x$, the metric takes the warped product form
\[
g = db^2 + e^{2b}\,|ds|^2 ,
\]
so $|\nabla b|\equiv 1$, the level sets $\{b=\text{const}\}$ are the
Euclidean horizontal hyperplanes, and by Remark~\ref{rem:coordinates},
\[
\Delta b = \partial_b\log\sqrt{\det h_b}
= \partial_b\log e^{(n-1)b} = n-1 .
\]

Since $\operatorname{Isom}(\Hy^n)$ acts transitively on
$\partial_\infty\Hy^n$, this holds for every ideal point: if $\phi$ maps
$\infty$ to $\xi$, then $b_\xi = b_\infty\circ\phi^{-1}+C_\phi$ for some
constant $C_\phi$, and since the Laplacian is isometry-invariant and
annihilates constants,
\[
\Delta b_\xi = (\Delta b_\infty)\circ\phi^{-1} = n-1 .
\]
In particular $\Delta b_\xi>0$ everywhere, so $-b_\xi$ is never a Busemann
function on $\Hy^n$, as asserted in Remark~\ref{rem:orientation}.
\end{example}

Under a strict upper curvature bound, $\Delta b$ is uniformly positive.
This is the only comparison input needed for Corollary~\ref{cor:curvature}.
The corresponding horosphere comparison is classical; see, for instance,
\cite{HeintzeImHof1977}. For completeness, we give the short
distributional-limit argument that passes from the standard Hessian
comparison estimates for the approximating distance functions
$d(\cdot,\gamma(t))$ to the corresponding bounds for $b$. This passage
requires only the local uniform convergence of $d(\cdot,\gamma(t))-t$,
which follows from the equi-Lipschitz property.

\begin{lemma}[Local uniform convergence from pointwise convergence]
\label{lem:equi-lipschitz-conv}
Let $(X,d)$ be a metric space and let $\{f_t\}_{t\ge 0}$ be an
equi-Lipschitz family of functions on $X$ converging pointwise to a
function $f$. Then $f$ is Lipschitz with the same constant, and the
convergence is uniform on every compact subset of $X$.
\end{lemma}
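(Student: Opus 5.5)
The argument has two parts: first pass the Lipschitz bound to the limit pointwise, then upgrade pointwise convergence to uniform convergence on compacta by an $\varepsilon/3$ argument over a finite net. Let $L$ be the common Lipschitz constant, so $|f_t(x)-f_t(y)|\le L\,d(x,y)$ for all $t\ge0$ and $x,y\in X$. For fixed $x,y$, let $t\to\infty$ in this inequality. Pointwise convergence gives $|f(x)-f(y)|\le L\,d(x,y)$, so $f$ is $L$-Lipschitz.

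For the uniform convergence, fix a compact $K\subset X$ and $\varepsilon>0$. By total boundedness of $K$, choose finitely many points $x_1,\dots,x_m\in K$ such that every $x\in K$ lies within distance $\varepsilon/(3L)$ of some $x_i$; if $L=0$ any single point works. Pointwise convergence at these finitely many points gives a $T$ such that $|f_t(x_i)-f(x_i)|<\varepsilon/3$ for all $i$ and all $t\ge T$. For an arbitrary $x\in K$, pick $x_i$ with $d(x,x_i)<\varepsilon/(3L)$ and estimate
\[
|f_t(x)-f(x)|\le |f_t(x)-f_t(x_i)|+|f_t(x_i)-f(x_i)|+|f(x_i)-f(x)|<\tfrac{\varepsilon}{3}+\tfrac{\varepsilon}{3}+\tfrac{\varepsilon}{3}=\varepsilon .
\]
The first term is bounded using equi-Lipschitz continuity and the third using the Lipschitz bound for $f$ just proved. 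Since $T$ does not depend on $x$, the convergence is uniform on $K$.

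Both steps are routine. The only point needing care is that the index is continuous ($t\ge0$) rather than sequential. This causes no difficulty: pointwise convergence at finitely many points still yields a single threshold $T$, obtained by taking the maximum of the finitely many individual thresholds.
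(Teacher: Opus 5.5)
Your proof is correct and follows the paper's argument essentially verbatim: you pass the Lipschitz bound to the pointwise limit, then run an $\varepsilon/3$ estimate over a finite net of $K$ with a single threshold $T$. The only cosmetic difference is that the paper takes the mesh $\delta=\varepsilon/(3(1+L))$ to avoid dividing by zero when $L=0$, whereas you treat the case $L=0$ separately.
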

\begin{proof}
The Lipschitz bound passes to the pointwise limit. Given a compact
$K\subset X$ and $\varepsilon>0$, let $L\ge0$ be a common Lipschitz
constant and set $\delta:=\varepsilon/(3(1+L))$. Choose a finite
$\delta$-net $\{x_i\}$ in $K$. By pointwise convergence at the finitely
many $x_i$, there is $T$ such that $|f_t(x_i)-f(x_i)|<\varepsilon/3$ for
all $t\ge T$ and all $i$. For any $x\in K$, choose $i$ with
$d(x,x_i)<\delta$. Then
\[
|f_t(x)-f(x)|\le |f_t(x)-f_t(x_i)|+|f_t(x_i)-f(x_i)|+|f(x_i)-f(x)|
\le 2L\delta+\frac{\varepsilon}{3}<\varepsilon .
\]
Hence $f_t\to f$ uniformly on $K$.
\end{proof}

\begin{lemma}[One-sided comparison]\label{lem:comparison}
Assume the sectional curvature satisfies $K\le-\kappa_2<0$.
Then
\[
  \Delta b\;\ge\;(n-1)\sqrt{\kappa_2}\qquad\text{on }\mathcal M .
\]
If in addition $K\ge-\kappa_1$ with $\kappa_1\ge\kappa_2$, then also
$\Delta b\le(n-1)\sqrt{\kappa_1}$.
\end{lemma}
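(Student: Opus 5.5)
The plan follows the route the paper itself announces: prove the Hessian comparison for the approximating distance functions $r_t:=d(\cdot,\gamma(t))$, then pass to the limit $t\to\infty$ in the distributional sense. Setting $f_t:=r_t-t$, the monotonicity and lower bound established after \eqref{eq:busemann_def_revised} show that $f_t\downarrow b$ pointwise. Each $f_t$ is $1$-Lipschitz, so Lemma~\ref{lem:equi-lipschitz-conv} gives $f_t\to b$ uniformly on compact sets.

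\emph{Pointwise comparison for $r_t$.} The function $r_t$ is smooth on $\mathcal M\setminus\{\gamma(t)\}$, since $\exp$ is a diffeomorphism and there is no cut locus. Standard Rauch/Riccati comparison under $K\le-\kappa_2$ gives
\[
\Hess r_t\ \ge\ \sqrt{\kappa_2}\coth\bigl(\sqrt{\kappa_2}\,r_t\bigr)\,(g-dr_t\otimes dr_t),
\]
and tracing yields $\Delta r_t\ge (n-1)\sqrt{\kappa_2}\coth(\sqrt{\kappa_2}r_t)\ge(n-1)\sqrt{\kappa_2}$ away from $\gamma(t)$. Under $K\ge-\kappa_1$, the reverse comparison gives $\Delta r_t\le (n-1)\sqrt{\kappa_1}\coth(\sqrt{\kappa_1}r_t)$.

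\emph{Passage to the limit.} Fix a compact $K$ and a nonnegative test function $\phi\in C_c^\infty(\mathcal M)$ with support in $K$. For $t$ large, $\gamma(t)\notin K$, because $d(o,\gamma(t))=t$. Hence $f_t$ is smooth on a neighbourhood of $\operatorname{supp}\phi$, and
\[
\int f_t\,\Delta\phi=\int\phi\,\Delta r_t\ \ge\ (n-1)\sqrt{\kappa_2}\int\phi .
\]
Uniform convergence on $K$ lets us pass to the limit on the left, giving $\int b\,\Delta\phi\ge(n-1)\sqrt{\kappa_2}\int\phi$. Since $b\in C^2$ (Proposition~\ref{prop:gradient-norm}), integrating by parts turns this into $\int\phi\,(\Delta b-(n-1)\sqrt{\kappa_2})\ge0$ for every $\phi\ge0$. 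Continuity of $\Delta b$ then yields the pointwise bound.

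For the upper bound, note that $r_t\ge t-d(x,o)\to\infty$ uniformly on $K$. Therefore $\coth(\sqrt{\kappa_1}r_t)\to1$ uniformly on $K$, and the same limit argument gives $\Delta b\le(n-1)\sqrt{\kappa_1}$. The assumption $\kappa_1\ge\kappa_2$ is only needed for consistency of the two bounds.

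The main obstacle is the first step: stating the Hessian/Laplacian comparison cleanly for $r_t$ under one-sided curvature bounds. I would cite the standard Riccati comparison for the shape operator of distance spheres along radial geodesics, which applies globally on a Cartan--Hadamard manifold because there are no conjugate points. Everything after that is soft, since only the uniform convergence of Lemma~\ref{lem:equi-lipschitz-conv} and the $C^2$ regularity of $b$ are used.
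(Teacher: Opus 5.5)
Your proposal is correct and follows the paper's proof: Hessian/Laplacian comparison for $r_t$ near the support of a nonnegative test function once $\gamma(t)$ has left it, then integration by parts, locally uniform convergence via Lemma~\ref{lem:equi-lipschitz-conv}, and continuity of $\Delta b$ to get the pointwise bound. For the upper bound the paper writes out an explicit $\varepsilon$-margin, $\Delta r_t\le(n-1)\sqrt{\kappa_1}(1+\varepsilon)$, and lets $\varepsilon\downarrow0$; this is exactly your observation that $\coth(\sqrt{\kappa_1}\,r_t)\to1$ uniformly on compact sets.
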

\begin{proof}
Write $b_t:=d(\cdot,\gamma(t))-t$ and $c_0:=(n-1)\sqrt{\kappa_2}$. Fix
$\varphi\in C_c^\infty(\mathcal M)$ with $\varphi\ge0$, and set
$\mathcal K:=\operatorname{supp}\varphi$. Since $d(\gamma(0),\gamma(t))=t\to
\infty$, for all $t$ large we have $\gamma(t)\notin\mathcal K$; as
$\mathcal M$ is Cartan--Hadamard and hence has no cut locus,
$r_t:=d(\cdot,\gamma(t))$ is smooth on a neighbourhood of $\mathcal K$. The
Hessian comparison theorem under $K\le-\kappa_2$
\cite[Thm.~6.4.3]{Petersen2016} gives
\[
  \Delta r_t\;\ge\;(n-1)\sqrt{\kappa_2}\,
  \coth\!\big(\sqrt{\kappa_2}\,r_t\big)\;\ge\;c_0
  \qquad\text{on }\mathcal K .
\]
Since $b_t=r_t-t$ we have $\Delta b_t=\Delta r_t$ there, and as $\Delta$ is
formally self-adjoint on $C_c^\infty(\mathcal M)$, integration by parts
gives
\[
\int_{\mathcal M} b_t\,\Delta\varphi\,d\mu_g
= \int_{\mathcal M} (\Delta b_t)\,\varphi\,d\mu_g
\;\ge\; c_0\int_{\mathcal M}\varphi\,d\mu_g .
\]
The family $\{b_t\}$ is $1$-Lipschitz and converges pointwise to $b$, so by
Lemma~\ref{lem:equi-lipschitz-conv} the convergence is uniform on
$\mathcal K$ and
\[
\int_{\mathcal M} b_t\,\Delta\varphi\,d\mu_g
\;\longrightarrow\;\int_{\mathcal M} b\,\Delta\varphi\,d\mu_g
= \int_{\mathcal M}(\Delta b)\,\varphi\,d\mu_g ,
\]
the last equality by integration by parts, legitimate since $b\in C^2$
(Proposition~\ref{prop:gradient-norm}). Hence $\Delta b\ge c_0$
distributionally; since $\Delta b$ is continuous, the inequality holds
pointwise. Indeed, were $\Delta b-c_0<0$ somewhere, continuity would give a
ball on which it stays negative, and a nonnegative test function supported
there would contradict the integral inequality.

For the upper bound, the Hessian comparison from below under
$K\ge-\kappa_1$ \cite[Thm.~6.4.3]{Petersen2016} gives, on $\mathcal K$ and
for $t$ large,
\[
\Delta r_t\le(n-1)\sqrt{\kappa_1}\,\coth\!\big(\sqrt{\kappa_1}\,r_t\big).
\]
Since $r_t\ge t-\max_{\mathcal K}d(\cdot,\gamma(0))\to\infty$ uniformly on
$\mathcal K$ and $\coth s\to1$, for every $\varepsilon>0$ and all $t$
large,
\[
\Delta r_t\le(n-1)\sqrt{\kappa_1}\,(1+\varepsilon)\qquad\text{on }\mathcal K .
\]
The same distributional argument yields
$\Delta b\le (n-1)\sqrt{\kappa_1}(1+\varepsilon)$ pointwise, and letting
$\varepsilon\downarrow0$ concludes the proof.
\end{proof}

\section{The main theorem: nonexistence of equal-level Busemann profiles}\label{sec:main}

\begin{definition}[Balanced double-well potential]\label{def:balanced}
A potential $W\in C^1(\R)$ is a \emph{double-well potential} with wells at
$\pm1$ if $\pm1$ are strict local minima of $W$,
\[
W'(\pm1)=0,
\]
and $W$ has no other local minimum in $[-1,1]$. It is \emph{balanced} if in
addition $W(-1)=W(1)$, and \emph{unbalanced} otherwise. When
$W\in C^2(\R)$, the wells are called \emph{non-degenerate} if
$W''(\pm1)>0$.
\end{definition}

The stronger condition is not used in Theorem~\ref{thm:main} or
Corollary~\ref{cor:curvature}, whose equal-level rigidity mechanism only
requires $W\in C^1(\R)$ and $W(-1)=W(1)$ for profiles joining the two
wells. Corollary~\ref{cor:limits-exist} requires in addition that the
critical points of $W$ be isolated; Proposition~\ref{prop:compatibility-front}
imposes the stronger unbalanced bistable hypotheses stated there; and the
normalization \eqref{eq:W-assumptions}, including non-degeneracy of the
wells, is imposed from Section~\ref{sec:warped} onward.

We assume throughout that the elliptic equation of interest is
$\Delta u = W'(u)$, where $W\in C^1(\mathbb R)$ is a prescribed potential;
its precise form will be specified in the application, as the reduction
below is independent of that choice.

\begin{definition}[Busemann profile]\label{def:profile}
A \emph{Busemann profile} is a function of the form $u=U\circ b$ with
$U\in C^2(\R)$.
\end{definition}

\begin{lemma}[Reduction]\label{lem:reduction}
A Busemann profile $u=U\circ b$ solves
\begin{equation}\label{eq:AC}
  \Delta u = W'(u)
\end{equation}
if and only if
\begin{equation}\label{eq:reduced}
  U''(b(x))+U'(b(x))\,\Delta b(x) = W'\big(U(b(x))\big)
  \qquad\text{for all }x\in\mathcal M .
\end{equation}
\end{lemma}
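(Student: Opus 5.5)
The plan is to derive Lemma~\ref{lem:reduction} directly from Lemma~\ref{lem:invariant}, with no further geometric input. Since $U\in C^2(\R)$ and $b\in C^2(\mathcal M)$ (Proposition~\ref{prop:gradient-norm}), the function $u=U\circ b$ lies in $C^2(\mathcal M)$. So equation \eqref{eq:AC} is meant in the classical pointwise sense, and both sides of it are continuous functions on $\mathcal M$.

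The first step is to substitute identity \eqref{eq:invariant} into the left-hand side of \eqref{eq:AC}. At each $x\in\mathcal M$ this gives
\[
\Delta u(x)=U''(b(x))+U'(b(x))\,\Delta b(x).
\]
The right-hand side of \eqref{eq:AC} at $x$ is simply
\[
W'(u(x))=W'\bigl(U(b(x))\bigr).
\]
The second step is to compare the two. Equation \eqref{eq:AC} holds at $x$ exactly when these two expressions agree, which is exactly \eqref{eq:reduced} at $x$. Since this equivalence holds for each fixed $x$, quantifying over all $x\in\mathcal M$ gives both directions of the lemma at once.

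No real obstacle arises. The only point that needs care is regularity: the Laplacian must be applied classically, which requires $b\in C^2$ and $|\nabla b|\equiv1$. Both facts are already established in Proposition~\ref{prop:gradient-norm} and were used in Lemma~\ref{lem:invariant}.

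I would also note after the proof that \eqref{eq:reduced} is imposed at points $x$, not at values $t=b(x)$. The coefficient $\Delta b(x)$ may vary along a horosphere, so in general \eqref{eq:reduced} is not a single ODE in $t$. The flow-line restriction $U''+\alpha_pU'=W'(U)$ with $\alpha_p(t)=\Delta b(\varphi_p(t))$ follows by evaluating \eqref{eq:reduced} at $x=\varphi_p(t)$, using $b(\varphi_p(t))=t$ from Proposition~\ref{prop:global}(ii).
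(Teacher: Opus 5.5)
Your proposal is correct and follows the paper's own argument: the paper's proof just says the lemma is immediate from Lemma~\ref{lem:invariant}. Your pointwise substitution of \eqref{eq:invariant} into \eqref{eq:AC}, with the regularity point noted, is that one-line argument written out in full.
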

\begin{proof}
Immediate from Lemma~\ref{lem:invariant}.
\end{proof}

\begin{remark}[Consistency of the ansatz]\label{rem:consistency}
Fix $t_0\in\R$ and suppose $u=U\circ b$ solves \eqref{eq:AC}. Evaluating
\eqref{eq:reduced} at two points $x,y\in\Sigma_{t_0}=\{b=t_0\}$ and
subtracting, the terms $U''(t_0)$ and $W'(U(t_0))$ cancel, leaving
\[
U'(t_0)\bigl(\Delta b(x)-\Delta b(y)\bigr)=0 .
\]
Hence for each $t_0$, either $U'(t_0)=0$ or $\Delta b$ is constant on
$\Sigma_{t_0}$.

The ansatz is therefore self-limiting: wherever $U'(t)\neq0$, the mean
curvature of the corresponding horosphere $\Sigma_t$
(Corollary~\ref{cor:horosphere}) must be constant along that horosphere.
Note that the two-sided bound of Lemma~\ref{lem:comparison} does not
deliver this: it confines $\Delta b$ to an interval without making it
leaf-constant. Hyperbolic space satisfies the condition in the strongest
form, $\Delta b\equiv n-1$ (Example~\ref{ex:H2}). Theorem~\ref{thm:main}
requires no such leaf-constancy hypothesis: it assumes only
$\inf_{\mathcal M}\Delta b>0$, and rules out nonconstant profiles between
equal potential levels regardless of the leaf structure. Under the
equal-potential hypotheses of Theorem~\ref{thm:main}, nonconstant profiles
are nevertheless excluded without assuming this leaf-constancy a priori: as
already noted in the introduction, the profile ansatz $u=U\circ b$ itself
is retained, and the result extends the \emph{invariant-profile}
nonexistence mechanism of \cite{BirindelliMazzeo2009}, not their symmetry
theorems, which force certain general solutions with invariant asymptotic
data to be invariant.
\end{remark}

\begin{theorem}[Rigidity of Equal-Potential Busemann Profiles]\label{thm:main}
Let $(\mathcal{M}^n, g)$, with $n \ge 2$, be a smooth Cartan--Hadamard
manifold, and let $b=b_\xi$ be a Busemann function on $\mathcal M$
exhibiting a uniformly strictly positive Laplacian, namely
\begin{equation}
    c := \inf_{\mathcal{M}} \Delta_g b > 0.
\end{equation}
Let $W \in C^1(\mathbb{R})$ be a prescribed potential. If $u = U \circ b$,
where $U \in C^2(\mathbb{R})$, is a Busemann profile solving the
semilinear elliptic equation $\Delta_g u = W'(u)$ on $\mathcal{M}$, and the
asymptotic limits $\ell_\pm := \lim_{t \to \pm\infty} U(t)$ exist and are
finite, then the profile satisfies the one-sided energy constraint
 \begin{equation}\label{eq:climbing}
   W(\ell_-)\;\le\;W(\ell_+),
 \end{equation}
where $\ell_-$ corresponds to the limit as $t\to-\infty$ (towards the ideal
boundary point $\xi$), and $\ell_+$ to $t\to+\infty$ (away from $\xi$).
Furthermore, equality $W(\ell_-) = W(\ell_+)$, e.g.\ if
$\ell_\pm\in\{-1,+1\}$ are the two wells of a balanced double-well
potential (Definition~\ref{def:balanced}), holds if and only if $U$ is
identically constant. Consequently, for any balanced double-well
potential, no nonconstant Busemann profile connecting the energy minima
can exist on $\mathcal{M}$.
\end{theorem}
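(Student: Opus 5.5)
Fix a single flow line of $\nabla b$. By Proposition~\ref{prop:global}(ii) we may take $p\in\Sigma_0$ and $\varphi_p(t):=\Phi_t(p)$, so that $b(\varphi_p(t))=t$ for all $t\in\R$. Evaluating the reduced equation \eqref{eq:reduced} (Lemma~\ref{lem:reduction}) at $x=\varphi_p(t)$ gives the scalar ODE
\[
U''(t)+\alpha(t)U'(t)=W'(U(t)),\qquad \alpha(t):=\Delta b(\varphi_p(t))\ge c>0 ,
\]
with $\alpha$ continuous. With the Hamiltonian $H(t):=\tfrac12 U'(t)^2-W(U(t))$, which is $C^1$ because $U\in C^2$ and $W\in C^1$, the ODE gives the dissipation identity $H'(t)=U'(U''-W'(U))=-\alpha(t)U'(t)^2\le -c\,U'(t)^2\le 0$. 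Hence $H$ is nonincreasing and $H(t_1)-H(t_2)=\int_{t_1}^{t_2}\alpha (U')^2\ge c\int_{t_1}^{t_2}(U')^2$ for $t_1<t_2$.

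The main obstacle is to show that $U'(t)\to0$ as $t\to\pm\infty$; only the limits of $U$ are assumed. I would first prove that $H$ has finite limits at $\pm\infty$. Since $U$ is continuous with finite limits, it is bounded, so $W(U)$ is bounded. Monotonicity of $H$ gives $H\le H(0)$ on $[0,\infty)$, hence $(U')^2$ is bounded there, and $H$, being monotone and bounded below on $[0,\infty)$, has a finite limit $H_+$. As $t\to-\infty$, $H$ increases, so a priori $H(t)\to+\infty$ is possible. To exclude it, note that if $H\to+\infty$ then $\tfrac12(U')^2\to\infty$, so for $t\ll0$ the derivative $U'$ has a fixed sign and $|U'|\to\infty$. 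This contradicts the finiteness of $\ell_-$, because $\int_{-\infty}^{0}U'$ would diverge. Thus $H_-:=\lim_{t\to-\infty}H$ is finite, $U'$ is bounded on $\R$, and $\int_\R (U')^2\le (H_--H_+)/c<\infty$. Since $\tfrac12(U')^2=H+W(U)$ converges at $\pm\infty$ and is integrable, its limit must be $0$. So $U'(t)\to0$, and therefore $H_\pm=-W(\ell_\pm)$.

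Now $H_-\ge H_+$ reads $-W(\ell_-)\ge -W(\ell_+)$, which is \eqref{eq:climbing}. For the equality case, if $W(\ell_-)=W(\ell_+)$ then $H_-=H_+$. Since $H$ is monotone, it is constant, so $0=H'=-\alpha(U')^2$ with $\alpha\ge c>0$. Hence $U'\equiv0$ and $U$ is constant. Conversely, a constant $U$ trivially gives $\ell_-=\ell_+$.

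For the final consequence, let $W$ be a balanced double well and suppose $\ell_\pm\in\{-1,1\}$, so that $W(\ell_-)=W(\ell_+)$. Then $U$ is constant, so $\ell_-=\ell_+$, and no profile connecting distinct wells exists. Only the single flow line and the lower bound $\alpha\ge c$ are used; no leaf-constancy of $\Delta b$ is needed, in accordance with Remark~\ref{rem:consistency}.
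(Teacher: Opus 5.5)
Your proof is correct and follows the paper's argument. Both restrict to one flow line, use the dissipation identity $H'=-\alpha(U')^2\le -c(U')^2$, identify $H(\pm\infty)=-W(\ell_\pm)$, and read off \eqref{eq:climbing} and the equality case from the monotonicity of $H$. The only difference is how you identify the limits of $H$. The paper takes $\tau_m\in(m,m+1)$ with $U'(\tau_m)=U(m+1)-U(m)\to0$ and uses monotonicity of $H$ to get both finite limits at once; this is shorter than your exclusion of $H(-\infty)=+\infty$ followed by $L^2$ decay, and it does not use $c>0$ at that step, which is why it carries over verbatim to Corollary~\ref{cor:nonnegative-drift}. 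Your route, in exchange, also yields $U'(t)\to0$.
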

\begin{proof}
Let $\varphi:\R\to\M$ be an integral curve of $\nabla b$, parametrized so
that $b(\varphi(0))=0$; by Proposition~\ref{prop:global}(ii), $\varphi$ is
a complete unit-speed geodesic with $b(\varphi(t))=t$. Evaluating
\eqref{eq:reduced} along $\varphi$ and writing
$\alpha(t):=\dg b(\varphi(t))\ge c>0$, continuous since $b\in C^2$, we find
that $U$ satisfies
\begin{equation}\label{eq:ode-flowline}
  U''(t)+\alpha(t)\,U'(t)=W'(U(t)),\qquad t\in\R .
\end{equation}
If $U$ is nonconstant, Remark~\ref{rem:consistency} shows that $\dg b$ is
constant on each horosphere met by $\{U'\neq0\}$, so $\alpha$ does not
depend on the chosen flow line there; in any case the argument below uses
only the bound $\alpha\ge c$.

Define
\begin{equation}\label{eq:hamiltonian}
  H(t):=\tfrac12\,U'(t)^2-W(U(t))\in C^1(\R),
\end{equation}
so that by \eqref{eq:ode-flowline},
\begin{equation}\label{eq:dissipation}
  H'(t)=U'\big(U''-W'(U)\big)=-\alpha(t)\,U'(t)^2\;\le\;-c\,U'(t)^2\;\le\;0 .
\end{equation}
Thus $H$ is nonincreasing and its limits $H(\pm\infty)$ exist in the
extended reals.

We identify these limits by a sequential argument. Since $U$ has finite
limits $\ell_\pm$, the mean value theorem on $[m,m+1]$ provides
$\tau_m\in(m,m+1)$ with
\[
  U'(\tau_m)=U(m+1)-U(m)\longrightarrow0 \qquad(m\to\pm\infty).
\]
As $\tau_m\to\pm\infty$ we have $U(\tau_m)\to\ell_\pm$, so by continuity of
$W$,
\[
  H(\tau_m)=\tfrac12U'(\tau_m)^2-W(U(\tau_m))\longrightarrow -W(\ell_\pm).
\]
Being monotone, $H$ has limits at $\pm\infty$ that agree with those along
any sequence tending to $\pm\infty$; hence
\[
  H(-\infty)=-W(\ell_-),\qquad H(+\infty)=-W(\ell_+),
\]
both finite.

To obtain the integral identity, for any $R>0$,
\[
  H(R)-H(-R)=\int_{-R}^{R} H'(t)\,dt=-\int_{-R}^{R}\alpha(t)\,U'(t)^2\,dt .
\]
The integrand $\alpha U'^2$ is nonnegative, so by monotone convergence the
improper integral exists in $[0,+\infty]$; its value is finite because
$H(\pm\infty)$ are. Letting $R\to\infty$,
\begin{equation}\label{eq:integral-identity}
  W(\ell_+)-W(\ell_-)=\int_{-\infty}^{+\infty} \alpha(t)\,U'(t)^2\,dt \;\ge\;0,
\end{equation}
which is \eqref{eq:climbing}. Equality holds if and only if the integral
vanishes; since $\alpha\ge c>0$, this forces $U'\equiv0$, i.e.\ $U$
constant, say $U\equiv q$; evaluating \eqref{eq:ode-flowline} at any $t$
then gives $W'(q)=0$. Conversely, if $U\equiv q$ is any constant solution,
then $\ell_-=\ell_+=q$ and equality holds trivially. In particular, every
nonconstant profile satisfies $W(\ell_-)<W(\ell_+)$.
\end{proof}

\begin{corollary}\label{cor:curvature}
Let $\mathcal M$ be Cartan--Hadamard with sectional curvature
$K\le-\kappa_2<0$, and let $W\in C^1$ be a balanced double-well potential
(Definition~\ref{def:balanced}). Then no nonconstant Busemann profile
connects the wells.
\end{corollary}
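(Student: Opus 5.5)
The plan is to reduce the corollary to Theorem~\ref{thm:main} by way of the comparison estimate of Lemma~\ref{lem:comparison}. There are only two things to check: that the curvature bound delivers the hypothesis $\inf_{\mathcal M}\Delta b>0$, and that a profile ``connecting the wells'' is precisely one with finite limits $\ell_\pm\in\{-1,+1\}$ at which $W$ takes the same value.

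\textbf{Step 1 (drift).} Fix $\xi\in\partial_\infty\mathcal M$ and $b=b_\xi$. The lower bound of Lemma~\ref{lem:comparison} under $K\le-\kappa_2<0$ gives $\Delta b\ge(n-1)\sqrt{\kappa_2}$ pointwise. Since $n\ge2$, this yields
\[
c:=\inf_{\mathcal M}\Delta b\ \ge\ (n-1)\sqrt{\kappa_2}>0 .
\]
This holds for every ideal point $\xi$, so no choice of horosphere family escapes the hypothesis of the theorem.

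\textbf{Step 2 (interpretation).} ``Connects the wells'' means that $u=U\circ b$ solves $\Delta u=W'(u)$ with $U\in C^2$, and that the limits $\ell_\pm=\lim_{t\to\pm\infty}U(t)$ exist and form the set $\{-1,+1\}$, in either order. In particular both limits are finite. Balancedness (Definition~\ref{def:balanced}) gives $W(\ell_-)=W(\ell_+)$ whichever limit is $-1$ and whichever is $+1$; this matters because the theorem itself is one-sided in its orientation. If one also wants to allow $\ell_-=\ell_+$ equal to a single well, the equality $W(\ell_-)=W(\ell_+)$ is trivial in that case as well.

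\textbf{Step 3 (conclusion).} With Steps 1 and 2 in place, every hypothesis of Theorem~\ref{thm:main} holds, and we are in the equality case. The equality clause of that theorem then forces $U$ to be constant, which contradicts nonconstancy. The theorem only needs $W\in C^1$, which matches the regularity assumed in the corollary.

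\textbf{Expected obstacle.} The only point requiring care is Step 1, and specifically the passage from the Hessian comparison for the approximating distance functions to a pointwise bound on $\Delta b$. Lemma~\ref{lem:comparison} already supplies this, using the distributional limit together with the continuity of $\Delta b$, which follows from $b\in C^2$. The remaining steps are bookkeeping.
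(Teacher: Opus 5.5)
Your proposal is correct and is exactly the paper's argument: Lemma~\ref{lem:comparison} gives $\inf_{\mathcal M}\Delta b\ge(n-1)\sqrt{\kappa_2}>0$, and the equality case of Theorem~\ref{thm:main} then rules out a nonconstant profile. Your check that balancedness gives $W(\ell_-)=W(\ell_+)$ in either orientation is a worthwhile bit of bookkeeping that the paper leaves implicit.
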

\begin{proof}
Lemma~\ref{lem:comparison} gives
$\inf_{\mathcal M}\Delta b\ge(n-1)\sqrt{\kappa_2}>0$; apply
Theorem~\ref{thm:main}.
\end{proof}

Since $\Delta_g b\ge0$ holds automatically
(Proposition~\ref{prop:flow-geodesic}), it is natural to ask how much of
Theorem~\ref{thm:main} survives when the strict lower bound $c>0$ is
dropped. At the $C^1$ level of Theorem~\ref{thm:main} we do not know; but
assuming $W'$ locally Lipschitz suffices to replace that bound by the far
weaker $\Delta_g b\not\equiv0$.

\begin{corollary}[Nontrivial Busemann drift]\label{cor:nonnegative-drift}
Let $(\mathcal M,g)$ be Cartan--Hadamard and let $b=b_\xi$ be a Busemann
function with
\[
\Delta_g b\not\equiv0 .
\]
Let $W\in C^1(\mathbb R)$ with $W'$ locally Lipschitz (in particular, any
$W\in C^2(\mathbb R)$). Let $u=U\circ b$, with $U\in C^2(\mathbb R)$,
solve $\Delta_g u=W'(u)$, and suppose the limits
$\ell_\pm=\lim_{t\to\pm\infty}U(t)$ exist, are finite, and satisfy
$W(\ell_-)=W(\ell_+)$. Then $U$ is constant.
\end{corollary}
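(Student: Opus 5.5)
The plan is to combine the dissipation identity from the proof of Theorem~\ref{thm:main} with Cauchy uniqueness. We only know $\Delta_g b\ge0$ (Proposition~\ref{prop:flow-geodesic}), so on every flow line $\varphi$ of $\nabla b$ the coefficient $\alpha(t)=\Delta_g b(\varphi(t))$ is continuous and nonnegative, and it vanishes nowhere identically only somewhere. The Hamiltonian $H=\tfrac12 U'^2-W(U)$ still satisfies $H'=-\alpha U'^2\le0$. The sequential argument from the proof of Theorem~\ref{thm:main} needs only finite limits of $U$ and continuity of $W$, so it gives $H(\pm\infty)=-W(\ell_\pm)$ verbatim. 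Hence, for every flow line,
\[
0=W(\ell_+)-W(\ell_-)=\int_{-\infty}^{\infty}\alpha(t)\,U'(t)^2\,dt .
\]
Because the integrand is continuous and nonnegative, $\alpha(t)U'(t)^2=0$ for all $t$, on every flow line.

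Next we pick a flow line along which $\alpha$ is positive somewhere. Since $\Delta_g b\not\equiv0$, there is a point $x_0$ with $\Delta_g b(x_0)>0$. By Proposition~\ref{prop:global}(ii), the integral curve $\varphi$ of $\nabla b$ through $x_0$ satisfies $b(\varphi(t))=t$ after translating the parameter. So $\alpha(t_0)>0$ at $t_0=b(x_0)$, and by continuity $\alpha>0$ on an open interval $I\ni t_0$. On $I$ we then get $U'\equiv0$, so $U\equiv q$ there, and $U''\equiv0$ on $I$. The ODE \eqref{eq:ode-flowline} then gives $W'(q)=0$.

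The step I expect to be the main obstacle is globalizing from $I$ to all of $\mathbb R$, because the coefficient $\alpha$ may vanish outside $I$. For this I would use Cauchy uniqueness on the flow line $\varphi$. The ODE $U''=-\alpha(t)U'+W'(U)$ has continuous dependence on $t$ and is locally Lipschitz in $(U,U')$, since $W'$ is locally Lipschitz and $\alpha$ is continuous; this is exactly where the hypothesis on $W'$ enters. The constant function $q$ solves the same ODE on all of $\mathbb R$ because $W'(q)=0$, and it has the same data $(q,0)$ at $t_0$ as $U$. Picard--Lindelöf uniqueness, applied on every compact interval, then gives $U\equiv q$ on $\mathbb R$.

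It is enough to work on this single flow line because $b$ maps it onto $\mathbb R$, so $U$ is determined everywhere. This concludes the argument. For the $C^1$ case one would note that it is only this uniqueness step that fails.
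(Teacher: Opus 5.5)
Your proposal is correct and follows the paper's proof essentially step for step. The sequential argument gives $H(\pm\infty)=-W(\ell_\pm)$ using only $\alpha\ge0$, and equal levels then force $\alpha\,(U')^2\equiv0$; you read this off the integral identity, while the paper reads it off the constancy of the monotone $H$, which is the same thing. Positivity of $\alpha$ near $t_0=b(x_0)$ pins $U\equiv q$ with $W'(q)=0$ on an interval, and Cauchy uniqueness for the locally Lipschitz field globalizes this, which, as you note, is the only place the Lipschitz hypothesis on $W'$ is used.
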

\begin{proof}
Choose $x_0\in\mathcal M$ with $\Delta_g b(x_0)>0$, set $t_0:=b(x_0)$, and
let $\varphi$ be the integral curve of $\nabla b$ with $\varphi(t_0)=x_0$;
by Proposition~\ref{prop:global}(ii) it is complete with
$b(\varphi(t))=t$. Writing $\alpha(t):=\Delta_g b(\varphi(t))$, continuous
since $b\in C^2(\mathcal M)$ (Proposition~\ref{prop:gradient-norm}) and
nonnegative by Proposition~\ref{prop:flow-geodesic}, equation
\eqref{eq:ode-flowline} holds along $\varphi$, and $H'=-\alpha (U')^2\le0$
by the same computation as in \eqref{eq:dissipation}, now using only
$\alpha\ge0$.

The sequential argument of Theorem~\ref{thm:main}, which uses only the
existence of the finite limits $\ell_\pm$ and the monotonicity of $H$, not
the bound $\alpha\ge c$, yields $H(\mp\infty)=-W(\ell_\mp)$. The
equal-level hypothesis forces $H(-\infty)=H(+\infty)$, so $H$ is constant
and
\[
\alpha(t)\,U'(t)^2\equiv0 \qquad (t\in\mathbb R).
\]
Since $\alpha(t_0)>0$ and $\alpha$ is continuous, $\alpha>0$ on an open
interval $I\ni t_0$; hence $U'\equiv0$ on $I$, so $U\equiv q$ there for
some constant $q$, and evaluating \eqref{eq:ode-flowline} on $I$ gives
$W'(q)=0$.

Thus $(U,U')(t_0)=(q,0)$, and the constant function $q$ solves
\eqref{eq:ode-flowline} with that same datum. The field
$(U,V)\mapsto(V,\,W'(U)-\alpha(t)V)$ is continuous in $t$ and locally
Lipschitz in $(U,V)$, so the Cauchy problem has a unique solution; hence
$U\equiv q$ on all of $\mathbb R$.
\end{proof}

\begin{remark}[The exceptional case is exactly the flat splitting]
\label{rem:dichotomy}
Throughout this remark $W$ is as in Corollary~\ref{cor:nonnegative-drift},
with $W'$ locally Lipschitz. The hypothesis $\Delta_g b\not\equiv0$ is
sharp, and its failure is completely understood. Indeed $\Hess b\ge0$
(Proposition~\ref{prop:flow-geodesic}), so $\Delta_g b\equiv0$ forces
$\Hess b\equiv0$: a positive semidefinite form of vanishing trace
vanishes. By Remark~\ref{rem:orientation}, $\mathcal M$ then splits
isometrically as $\mathbb R\times\mathcal N$ with $b$ equal, up to an
additive constant, to minus the projection onto the first factor, so
$\Delta_g(U\circ b)=U''\circ b$ and every solution of $U''=W'(U)$
produces a Busemann profile. For a balanced double-well potential
(Definition~\ref{def:balanced}) one has $W>W(\pm1)$ on $(-1,1)$,
otherwise the minimum of $W$ over $[-1,1]$ would be attained at an
interior point, which would be a further local minimum there, and the
classical heteroclinic joining $-1$ to $1$ is such a profile: nonconstant,
with equal potential levels.

Hence, for $W'$ locally Lipschitz, Corollary~\ref{cor:nonnegative-drift}
is a dichotomy: an equal-level Busemann profile is constant unless $b$ is
affine, in which case $\mathcal M$ splits and rigidity genuinely fails, as
the balanced double well above shows.
\end{remark}

The existence of the finite limits $\ell_\pm$ is an explicit hypothesis of
Theorem~\ref{thm:main}; it is not automatic under the general assumption
$W\in C^1(\mathbb R)$, where no sign or growth condition is imposed
outside a bounded range. For a \emph{bounded} profile $U$, the dissipation
identity together with $\alpha\ge c>0$, combined with a bound on the
Hamiltonian $H$, yields
\[
U'\in L^2(\mathbb R).
\]
Under two-sided negative curvature pinching, $-\kappa_1\le K \le-\kappa_2<0$,
and the additional assumption that $W$ has isolated critical points,
Corollary~\ref{cor:limits-exist} upgrades this to the existence of finite
limits $\ell_\pm$ with $W'(\ell_\pm)=0$ for every bounded Busemann
profile. We do not pursue conditions guaranteeing boundedness of $U$
itself, which would require growth hypotheses on $W'$ outside the range of
interest.

We first record an elementary fact about limit sets.

\begin{lemma}[Connectedness of the limit set]\label{lem:omega-connected}
Let $f:[0,\infty)\to\R$ be continuous and bounded. Then its set of
accumulation points as $t\to+\infty$,
\[
  \omega(f):=\{p\in\R:\ f(t_n)\to p\ \text{for some}\ t_n\to+\infty\},
\]
is a nonempty closed interval. The same holds as $t\to-\infty$.
\end{lemma}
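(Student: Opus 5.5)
The plan is to argue directly from the definition of $\omega(f)$ and the intermediate value theorem, showing in turn that $\omega(f)$ is nonempty, closed, bounded, and connected. Since connected subsets of $\R$ are intervals, these four properties together identify $\omega(f)$ as a nonempty compact interval, possibly a single point.

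\emph{Nonempty and bounded.} Let $M:=\sup|f|<\infty$. The sequence $f(n)$ lies in $[-M,M]$, so Bolzano--Weierstrass gives a subsequence $f(n_k)\to p$, and hence $p\in\omega(f)$. Every accumulation point is a limit of values in $[-M,M]$, so $\omega(f)\subset[-M,M]$.

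\emph{Closed.} Let $p_j\in\omega(f)$ with $p_j\to p$. For each $j$, choose $t_j\ge j$ with $|f(t_j)-p_j|<1/j$; this is possible because some sequence tending to $+\infty$ realizes $p_j$. Then $t_j\to\infty$ and $f(t_j)\to p$, so $p\in\omega(f)$.

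\emph{Connected.} This is the only step with real content, and I expect it to be the main point. It suffices to show that if $p<q$ both lie in $\omega(f)$ and $p<r<q$, then $r\in\omega(f)$. Take sequences $s_n\to\infty$ with $f(s_n)\to p$ and $t_n\to\infty$ with $f(t_n)\to q$. Passing to subsequences, we may interlace them so that $s_1<t_1<s_2<t_2<\cdots$ and $f(s_n)<r<f(t_n)$ for every $n$. The intermediate value theorem on $[s_n,t_n]$ then gives $r_n\in(s_n,t_n)$ with $f(r_n)=r$. Since $r_n\ge s_n\to\infty$, we get $r\in\omega(f)$.

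The case $t\to-\infty$ follows by applying the same argument to $t\mapsto f(-t)$ on $[0,\infty)$.
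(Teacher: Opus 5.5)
Your proof is correct and follows essentially the same route as the paper: Bolzano--Weierstrass for nonemptiness, a diagonal selection for closedness, and interlaced sequences with the intermediate value theorem for connectedness. Your extra observation that $\omega(f)\subset[-M,M]$, which shows the interval is compact, is left implicit in the paper.
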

\begin{proof}
Nonemptiness follows from the Bolzano--Weierstrass theorem. To see
closedness, take a sequence $p_k\in\omega(f)$ with $p_k\to p$. For each
$k$, choose $t_{k,j}\to\infty$ with $f(t_{k,j})\to p_k$. A diagonal
argument yields $t_k\to\infty$ with $f(t_k)\to p$, hence $p\in\omega(f)$.

For connectedness, let $p<q$ lie in $\omega(f)$ and let $p<r<q$. Choose
sequences $s_n\to\infty$ and $u_n\to\infty$ such that $f(s_n)\to p$ and
$f(u_n)\to q$. Passing to subsequences if necessary, we may assume that
$s_n<u_n<s_{n+1}$ for all $n$ large (interlacing). For sufficiently large
$n$, we have $f(s_n)<r<f(u_n)$. By the intermediate value theorem, there
exists $v_n\in(s_n,u_n)$ with $f(v_n)=r$. Since $v_n\to\infty$, it follows
that $r\in\omega(f)$. Therefore $\omega(f)$ is connected, hence an
interval (closed by the first part).
\end{proof}

\begin{corollary}[Existence of limits under two-sided pinching]
\label{cor:limits-exist}
Let $\mathcal M$ be Cartan--Hadamard with $-\kappa_1 \le K \le -\kappa_2 
0$, and let $W \in C^1(\mathbb R)$ have isolated critical points. Then
every bounded Busemann profile $u = U \circ b$ solving \eqref{eq:AC} has
finite limits $\ell_\pm = \lim_{t \to \pm\infty} U(t)$, with
$W'(\ell_\pm) = 0$. In particular, the hypothesis of finite limits in
Theorem~\ref{thm:main} may be replaced by boundedness of $U$ under these
assumptions.
\end{corollary}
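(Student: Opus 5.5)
Fix a flow line $\varphi$ of $\nabla b$ as in the proof of Theorem~\ref{thm:main}, so that $U$ satisfies \eqref{eq:ode-flowline} with $\alpha(t)=\Delta b(\varphi(t))$. Lemma~\ref{lem:comparison} gives $c\le\alpha\le C$ with $c=(n-1)\sqrt{\kappa_2}$ and $C=(n-1)\sqrt{\kappa_1}$; the upper bound is the only place the lower curvature bound enters. Boundedness of $U$, say $|U|\le M$, makes $W'(U)$ bounded. The plan is first to show that $U'$ is bounded and that $\int_{\mathbb R}(U')^2<\infty$. Then I would show $U'\to0$ at $\pm\infty$. Finally I would combine Lemma~\ref{lem:omega-connected} with the isolatedness of critical points to get convergence.

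\emph{Step 1: bounds on $U'$.} The equation is linear in $U'$: $(U')'+\alpha U'=W'(U)$. Variation of constants on $[s,t]$ gives
\[
U'(t)=U'(s)e^{-\int_s^t\alpha}+\int_s^t e^{-\int_\tau^t\alpha}W'(U(\tau))\,d\tau .
\]
Since $\alpha\ge c$, this yields $\limsup_{t\to+\infty}|U'(t)|\le \sup|W'(U)|/c$, so $U'$ is bounded on every half-line $[s,\infty)$. Boundedness of $U'$ as $t\to-\infty$ is less direct, because the damping amplifies backward in time. Here I would argue on the Hamiltonian $H$, which is nonincreasing by \eqref{eq:dissipation}. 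Suppose $|U'(t_k)|\to\infty$ along $t_k\to-\infty$. Then $|U''|\le C|U'|+\sup|W'(U)|$, so $|U'|$ stays comparable to $|U'(t_k)|$ on an interval of length of order $1/C$ around $t_k$. Over that interval $U$ would move by roughly $|U'(t_k)|/C$, which contradicts $|U|\le M$. The upper bound $\alpha\le C$ is exactly what makes this work, and this is where I expect the main difficulty to lie. With $U'$ bounded, $H$ is bounded and monotone, so $H(\pm\infty)$ are finite. Integrating \eqref{eq:dissipation} then gives $c\int_{\mathbb R}(U')^2\le H(-\infty)-H(+\infty)<\infty$.

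\emph{Step 2: $U'\to0$.} Since $U'$ is bounded, $U''=W'(U)-\alpha U'$ is bounded, so $(U')^2$ is uniformly Lipschitz. A nonnegative, uniformly continuous, integrable function tends to $0$ at $\pm\infty$, hence $U'(t)\to0$.

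\emph{Step 3: limits.} Lemma~\ref{lem:omega-connected} shows that the accumulation set $\omega_+$ of $U$ at $+\infty$ is a nonempty closed interval. Take $p\in\omega_+$ and $t_k\to\infty$ with $U(t_k)\to p$. Shifting, $U_k(t):=U(t+t_k)$ is bounded in $C^2$ on compacts. The shifted coefficients $\alpha(\cdot+t_k)$ are bounded, so after passing to a subsequence they converge weakly-$*$ in $L^\infty$ to some $\bar\alpha$. Because $U_k'\to0$ locally uniformly, the limit is $U_\infty\equiv p$, and the term $\alpha_kU_k'$ tends to $0$ uniformly. Integrating the equation over $[0,1]$ gives $U_k'(1)-U_k'(0)+\int_0^1\alpha_kU_k'=\int_0^1W'(U_k)$, and letting $k\to\infty$ yields $0=W'(p)$. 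Thus $\omega_+$ is an interval contained in the critical set of $W$. Since critical points are isolated, $\omega_+$ is a single point $\ell_+$, and $U\to\ell_+$ with $W'(\ell_+)=0$. The same argument gives the limit at $-\infty$. The final assertion then follows by feeding these limits into Theorem~\ref{thm:main}.
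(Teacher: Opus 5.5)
Your proof is correct and has the same skeleton as the paper's: bound $U'$ and get $U'\in L^2$, deduce $U'\to0$ from bounded $U''$, show accumulation points are critical by integrating \eqref{eq:ode-flowline} over unit intervals, and conclude with Lemma~\ref{lem:omega-connected} plus isolatedness. The genuine difference is how you obtain the a priori bound on $U'$.

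\textbf{Your route.} You use variation of constants forward in time and a Gronwall-type argument backward. From $|U''|\le C|U'|+K$ you get $|U'|\ge|U'(t_k)|/4$ on an interval of length about $1/C$ around $t_k$, once $|U'(t_k)|$ is large. Then $U$ would travel unboundedly far, contradicting boundedness. This is valid. It is also symmetric in time, so the variation-of-constants part is redundant. However, it already spends the upper bound $\alpha\le C$.

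\textbf{The paper's route.} It does what your phrase ``argue on the Hamiltonian'' suggests, which your written argument never actually uses. For any $t$, the mean value theorem on $[t-1,t]$ gives $s$ with $|U'(s)|\le 2M$. Monotonicity of $H$ then gives
\[
H(t)\le H(s)\le 2M^2+\max_{[-M,M]}|W| .
\]
On the other side, $H\ge-\max_{[-M,M]}W$ trivially. Hence $U'^2=2(H+W(U))$ is bounded on all of $\mathbb R$, using only $\alpha\ge0$.

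\textbf{What the comparison shows.} The step you flagged as the main difficulty is actually the easiest one. The lower curvature bound (i.e.\ $\alpha\le C$) is only needed afterwards, for bounded $U''$ and for $\int_t^{t+1}\alpha U'\to0$. Similarly, the weak-$*$ compactness of the shifted coefficients $\alpha(\cdot+t_k)$ in your Step~3 plays no role. The direct integral identity over $[0,1]$, which you also write down, is exactly the paper's argument and suffices on its own.
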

\begin{proof}
Fix a flow line $\varphi$ as in the proof of Theorem~\ref{thm:main}, so
that $U$ solves \eqref{eq:ode-flowline} with
$\alpha(t) = \Delta b(\varphi(t))$. By Lemma~\ref{lem:comparison},
\[
c := (n-1)\sqrt{\kappa_2} \ \le\ \alpha(t)\ \le\ (n-1)\sqrt{\kappa_1} =: C
\qquad (t \in \mathbb R),
\]
so $\alpha$ is bounded above and below by positive constants. Let
$I \subset \mathbb R$ be the closure of the range of $U$, a compact
interval.

First, we establish boundedness of $H$, $U'$, $U''$, and the
integrability of $U'^2$. From \eqref{eq:dissipation}, $H$ is
nonincreasing, since $\alpha\ge c>0$. Writing $B:=\max_{x\in I}|x|$, so
that $|U|\le B$ on $\mathbb R$, we bound $H$ on both sides.

\emph{Lower bound.} Since $W$ is continuous on the compact set $I$,
\[
H(t) = \tfrac12 U'(t)^2 - W(U(t)) \;\ge\; -\max_I W \qquad (t\in\mathbb R).
\]

\emph{Upper bound.} Fix $t\in\mathbb R$. By the mean value theorem on
$[t-1,t]$ there is $s\in(t-1,t)$ with
\[
U'(s)=U(t)-U(t-1),\qquad\text{hence}\qquad |U'(s)|\le 2B .
\]
Since $H$ is nonincreasing and $s<t$,
\[
  H(t)\le H(s)
  =\tfrac12U'(s)^2-W(U(s))
  \le2B^2+\max_I|W|
  =:H_+ .
\]
Hence $H$ is bounded on $\mathbb R$; being monotone, it has finite limits
$H(\pm\infty)$, with
\[
-\max_I W\le H(+\infty)\le H(-\infty)\le H_+ .
\]
Consequently,
\[
U'(t)^2 = 2\bigl(H(t) + W(U(t))\bigr)
\]
is bounded, hence so is $U'$. Moreover, $U'' = W'(U) - \alpha U'$ is
bounded because $W'$ is continuous on $I$ and $\alpha, U'$ are bounded.
Furthermore, \eqref{eq:dissipation} gives $c\, U'^2 \le -H'$, so for
every $R > 0$
\[
c \int_{-R}^{R} U'(t)^2 \, dt \le H(-R) - H(R)
\le H(-\infty) - H(+\infty),
\]
and letting $R \to \infty$,
\[
\int_{\mathbb R} U'(t)^2 \, dt
\le \frac{H(-\infty) - H(+\infty)}{c} < \infty.
\]

Next, we show that $U'(t) \to 0$ as $t \to \pm\infty$. Let
$L:=\|U''\|_{L^\infty(\mathbb R)}$. If $L=0$, then $U'$ is constant on
$\mathbb R$; since $U'\in L^2(\mathbb R)$, necessarily $U'\equiv0$, and
the claim is trivial. Assume henceforth $L>0$; then $U'$ is
$L$-Lipschitz. Suppose, for contradiction, that $U'(t) \not\to 0$ as
$t \to +\infty$. Then there exist $\varepsilon > 0$ and $t_n \to +\infty$
with $|U'(t_n)| \ge \varepsilon$. Put $\delta := \varepsilon/(2L)$. For
$|t - t_n| \le \delta$,
\[
|U'(t)| \ge |U'(t_n)| - L|t - t_n| \ge \frac{\varepsilon}{2}.
\]
After passing to a subsequence with $t_{n+1} > t_n + 2\delta$, the
intervals $[t_n - \delta, t_n + \delta]$ are pairwise disjoint, and each
contributes at least
\[
2\delta \cdot \left(\frac{\varepsilon}{2}\right)^2
= \frac{\varepsilon^3}{4L}
\]
to $\int_{\mathbb R} U'^2$, contradicting its finiteness. The same
argument applies as $t \to -\infty$.

Now we prove that every accumulation point of $U$ as $t \to +\infty$ is a
critical point of $W$. Integrating \eqref{eq:ode-flowline} over
$[t, t+1]$,
\[
\int_t^{t+1} W'(U(s)) \, ds
= U'(t+1) - U'(t) + \int_t^{t+1} \alpha(s) U'(s) \, ds.
\]
As $t \to +\infty$, the right-hand side tends to $0$: the first two
terms by the decay of $U'$, and the integral because
$|\alpha U'| \le C \sup_{[t,t+1]} |U'| \to 0$. Now let $p \in \omega(U)$,
say $U(t_n) \to p$ with $t_n \to +\infty$. Put
$\varepsilon_n:=\sup_{[t_n,t_n+1]}|U'|$; by the decay of $U'$
established above, $\varepsilon_n\to0$, so
\[
  \sup_{s\in[t_n,t_n+1]}|U(s)-p|
  \;\le\;|U(t_n)-p|+\varepsilon_n\;\longrightarrow\;0 ,
\]
i.e.\ $U\to p$ uniformly on $[t_n,t_n+1]$. As $W'$ is uniformly
continuous on $I$, it follows that $W'(U) \to W'(p)$ uniformly there,
whence
\[
0 = \lim_{n \to \infty} \int_{t_n}^{t_n + 1} W'(U(s)) \, ds = W'(p).
\]

Finally, by Lemma~\ref{lem:omega-connected}, the set $\omega(U)$ (for
$t \to +\infty$) is a closed interval; by the previous paragraph it is
contained in the critical set of $W$, which is discrete. An interval
contained in a discrete set is a singleton, so $\omega(U) = \{\ell_+\}$
and $U(t) \to \ell_+$ with $W'(\ell_+) = 0$. The same reasoning applied
to $t \to -\infty$ yields $\ell_-$.
\end{proof}

\begin{remark}[Sharpness: unequal levels admit nonconstant profiles]
\label{rem:sharpness}
The conclusion of Theorem~\ref{thm:main} is sharp: nonconstant profiles
can exist when $W(\ell_-)<W(\ell_+)$, and this is compatible with a
balanced double well, the constraint falls on the levels of the limits,
not on the shape of $W$.

Take $W\in C^2(\R)$ with wells at $\pm1$ in the sense of
Definition~\ref{def:balanced}, an additional critical point at $0$ with
$W(0)>0$, and $W(-1)=W(1)<W(0)$. Note that this $W$ is \emph{balanced} in
the sense of Definition~\ref{def:balanced}: the point is precisely that
Theorem~\ref{thm:main} constrains the levels $W(\ell_\pm)$ of the limits,
not the shape of $W$, and here the profile joins a well to the
intermediate critical value $0$, which sits strictly higher. (Precisely:
the potential $W$ is taken to satisfy the hypotheses of
\cite[eq.~(1.2), Prop.~2.2]{BirindelliMazzeo2009}, not merely the weaker
properties stated above.) On $\Hy^n$ in the upper half-space model of
Example~\ref{ex:H2}, with height coordinate $y$, $b=-\log y$ and
$\xi=\infty$, Busemann profiles $u=U\circ b$ are exactly the solutions
invariant under the parabolic subgroup fixing $\xi$. Birindelli--Mazzeo
\cite[Prop.~2.2, second assertion]{BirindelliMazzeo2009} construct such a
solution, unique up to the dilations $y\mapsto\lambda y$, $\lambda>0$
(which act on $U$ as the translations $b\mapsto b-\log\lambda$), with
\[
  \lim_{y\to+\infty} u = 1, \qquad \lim_{y\to 0} u = 0 .
\]
Since $b\to-\infty$ as $y\to+\infty$, this reads $\ell_-=1$ and
$\ell_+=0$ in Busemann coordinates, so $W(\ell_-)<W(\ell_+)$: the profile
climbs the potential in the direction of increasing $b$, as
Theorem~\ref{thm:main} requires. Note that it joins a well to the local
maximum, not two wells, precisely the configuration the theorem permits.
The connection between the two wells is excluded, and that is the first
assertion of \cite[Prop.~2.2]{BirindelliMazzeo2009}, recovered here by
Corollary~\ref{cor:curvature}.

The profile need not be monotone. In the logarithmic height coordinate
$\xi=\log y=-b$ used by \cite{BirindelliMazzeo2009}, the linearization at
the origin has characteristic equation
$\lambda^2-(n-1)\lambda-W''(0)=0$; when $W''(0)<-(n-1)^2/4$, the origin
is an unstable spiral in the $\xi$-orientation. Equivalently, in our
Busemann coordinate $b=-\xi$, it is a \emph{stable} spiral, approached as
$b\to+\infty$, where indeed $U(b)\to0$. Birindelli--Mazzeo further
identify a critical value $\gamma$ such that the profile remains
strictly positive if $-W''(0)\le\gamma$ and changes sign if
$-W''(0)>\gamma$; thus neither positivity nor monotonicity is automatic
once $|W''(0)|$ is large. This is immaterial here, since
Theorem~\ref{thm:main} requires only the existence of the limits
$\ell_\pm$.
\end{remark}

When $\Delta b\equiv c$ is constant (as on $\Hy^n$, where $c=n-1$),
equation \eqref{eq:ode-flowline} is the traveling-wave profile equation
$U''+cU'=W'(U)$ of the parabolic Allen--Cahn flow with wave speed $c$.
For a balanced double well, the only speed admitting a two-well front is
$c=0$: integrating the dissipation identity $H'=-c\,(U')^2$ over $\R$ and
using $H(\pm\infty)=-W(\ell_\pm)=-W(\pm1)$ gives
$0=H(+\infty)-H(-\infty)=-c\int_\R (U')^2$, forcing $c=0$ for
nonconstant $U$. Theorem~\ref{thm:main} is the geometric incarnation of
this classical fact (see Fife--McLeod~\cite{FifeMcLeod1977} for the
parabolic front theory), with the geometry supplying a variable,
uniformly positive speed.

\subsection*{Exact horospherical fronts at the compatibility point}\label{subsec:exact-fronts}

\begin{proposition}[Exact monotone horospherical fronts]
\label{prop:compatibility-front}
Assume that $\Delta_g b\equiv c$ for a constant $c>0$. Let
$W\in C^2(\mathbb R)$ be a double-well potential whose critical points are
three simple zeros $-1<m<1$ of $W'$, with
\[
W''(\pm1)>0,\qquad W''(m)<0,\qquad W(-1)<W(1),
\]
and let $\tau^*(W')>0$ denote the unique Fife--McLeod speed for which
\begin{equation}\label{eq:fife-mcleod-front}
U''+\tau U'=W'(U)
\end{equation}
admits a strictly increasing solution $U\in C^2(\mathbb R)$ satisfying
$U(-\infty)=-1$ and $U(+\infty)=1$. Then the following are equivalent:
\begin{enumerate}
\item[\textup{(i)}] $c=\tau^*(W')$;
\item[\textup{(ii)}] there exists a strictly increasing
$U\in C^2(\mathbb R)$, with $U(-\infty)=-1$, $U(+\infty)=1$, such that
$u:=U\circ b$ solves $\Delta_g u=W'(u)$ on $\mathcal M$.
\end{enumerate}
Whenever these equivalent conditions hold, every level set $\{u=s\}$,
$s\in(-1,1)$, is a horosphere $\Sigma_{t_s}=\{b=t_s\}$, with constant
mean curvature $H_{\Sigma_{t_s}}=c$ with respect to $N=\nabla b$; for
$n=2$, its signed geodesic curvature is $k_g=-c$.
\end{proposition}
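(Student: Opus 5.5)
The plan is to reduce everything to the one-dimensional travelling-wave problem via Lemma~\ref{lem:reduction}. Since $\Delta_g b\equiv c$, equation \eqref{eq:reduced} reads $U''(b(x))+cU'(b(x))=W'(U(b(x)))$ for all $x$. Because $b:\mathcal M\to\mathbb R$ is surjective (each flow line realizes every value, Proposition~\ref{prop:global}(ii)), this holds pointwise in $\mathcal M$ if and only if $U''+cU'=W'(U)$ holds on all of $\mathbb R$. Thus $u=U\circ b$ solves $\Delta_g u=W'(u)$ exactly when $U$ solves \eqref{eq:fife-mcleod-front} with $\tau=c$.

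For (i)$\Rightarrow$(ii), I take the Fife--McLeod front $U$ at speed $\tau^*(W')=c$. It is strictly increasing, $C^2$, and joins $-1$ to $1$, so by the reduction $U\circ b$ is a solution. For (ii)$\Rightarrow$(i), the given $U$ is a strictly increasing solution of \eqref{eq:fife-mcleod-front} with $\tau=c$ and the prescribed limits. The uniqueness of the speed, built into the definition of $\tau^*(W')$ as the unique speed admitting such a front, forces $c=\tau^*(W')$.

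The main point needing care is citing this uniqueness correctly. The classical argument is the following: two monotone fronts with speeds $\tau_1<\tau_2$ are compared via sliding/phase-plane trajectories leaving the saddle at $-1$ along the unstable manifold. The unstable branch depends monotonically on $\tau$, since $\frac{d}{dU}\bigl(\tfrac12 V^2\bigr)=W'(U)-\tau V$ with $V=U'>0$. Hence at most one $\tau$ hits the stable manifold of $1$. I will invoke \cite{FifeMcLeod1977} for this rather than reprove it.

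As a consistency check, the paper's dissipation identity gives $W(1)-W(-1)=c\int U'^2>0$, which matches $W(-1)<W(1)$ and $\tau^*>0$, so no sign conflict arises.

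For the geometric conclusion, fix $s\in(-1,1)$. Strict monotonicity and the limits give a unique $t_s$ with $U(t_s)=s$, so $\{u=s\}=b^{-1}(t_s)=\Sigma_{t_s}$. Corollary~\ref{cor:horosphere}(1) then gives $H_{\Sigma_{t_s}}=\Delta_g b=c$ with respect to $N=\nabla b$, and for $n=2$ part (3) gives $k_g=-\Delta_g b=-c$.
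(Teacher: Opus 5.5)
Your proposal is correct and follows the paper's proof. Both arguments use Lemma~\ref{lem:invariant} and the surjectivity of $b$ along a complete flow line to reduce to $U''+cU'=W'(U)$ on $\mathbb R$, then invoke Fife--McLeod existence for (i)$\Rightarrow$(ii) and speed uniqueness for (ii)$\Rightarrow$(i), and finally identify $\{u=s\}$ with $\Sigma_{t_s}$ and read off $H_{\Sigma_{t_s}}=c$ and $k_g=-c$ from Corollary~\ref{cor:horosphere}; your phase-plane sketch of uniqueness is a sound aside, but citing \cite{FifeMcLeod1977} suffices, as in the paper.
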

\begin{proof}
(i)$\Rightarrow$(ii): By the Fife--McLeod theorem
\cite[\S2, Cor.~2.3 and Thm.~2.4]{FifeMcLeod1977}, the unbalanced
condition $W(-1)<W(1)$ singles out a unique speed $\tau^*(W')>0$ for
which \eqref{eq:fife-mcleod-front} has a strictly increasing solution
with the prescribed limits; such a $U$ is a bijection of $\mathbb R$
onto $(-1,1)$. The compatibility hypothesis $c=\tau^*(W')$ then turns
\eqref{eq:fife-mcleod-front} into $U''+c\,U'=W'(U)$. Since
$b\in C^2(\mathcal M)$ and $U\in C^2(\mathbb R)$,
$u:=U\circ b\in C^2(\mathcal M)$, and Lemma~\ref{lem:invariant} gives
\[
\Delta_g u = U''(b) + U'(b)\,\Delta_g b = U''(b) + c\,U'(b) = W'(U(b)) = W'(u).
\]

(ii)$\Rightarrow$(i): Choose a complete integral curve $\varphi$ of
$\nabla b$, parametrized so that $b(\varphi(t))=t$. Evaluating the
profile equation along $\varphi$, Lemma~\ref{lem:invariant} and
$\Delta_g b\equiv c$ give
\[
U''(t)+c\,U'(t)=W'(U(t)) \qquad (t\in\mathbb R).
\]
Since $U$ is strictly increasing with $U(\mp\infty)=\mp1$, uniqueness of
the Fife--McLeod speed \cite[\S2, Cor.~2.3 and Thm.~2.4]{FifeMcLeod1977}
yields $c=\tau^*(W')$.

For either direction, given $s\in(-1,1)$, let $t_s$ be the unique real
number with $U(t_s)=s$; since $U$ is a bijection onto $(-1,1)$, such
$t_s$ exists and is unique. Then $\{u=s\}=\{U(b)=s\}=\{b=t_s\}=
\Sigma_{t_s}$, a horosphere. By Corollary~\ref{cor:horosphere},
$H_{\Sigma_t}=\Delta_g b=c$ for every $t$, and $k_g=-c$ when $n=2$.
\end{proof}

As a by-product, under the standing hypothesis $\Delta_g b\equiv c$ of
this proposition, elliptic regularity applied to $\Delta_g b=c$ shows
that $b$ is in fact smooth, resolving any regularity question in this
regime independently of Proposition~\ref{prop:gradient-norm}.

\subsection*{Extension: heterogeneous drift via the Busemann dictionary}
\label{subsec:heterogeneous}

Proposition~\ref{prop:compatibility-front} imposes a stringent
constraint: horospherical fronts exist exactly when $c=\tau^*(W')$, with
$c=\Delta_g b_\xi$ constant. Pinching alone does not deliver constancy:
by Lemma~\ref{lem:comparison} it only confines $\Delta_g b_\xi$ to the
interval $[(n-1)\sqrt{\kappa_2},(n-1)\sqrt{\kappa_1}]$. The natural
intermediate hypothesis is that $\Delta_g b_\xi$ be constant on each
horosphere, i.e.\ a function of $b_\xi$ alone. This is precisely the
condition that makes the reduced coefficient independent of the chosen
flow line and hence turns the profile equation into a single
nonautonomous ODE. By Remark~\ref{rem:consistency}, such leaf-constancy
is necessarily present, for any Busemann profile solving the equation,
at every level where $U'(t)\neq0$.

\begin{proposition}[Geometric dictionary: pinching as heterogeneous
media]\label{prop:dictionary}
Let $\mathcal M$ satisfy $-\kappa_1\le K\le-\kappa_2<0$ and suppose
\begin{equation}\label{eq:busemann-alpha}
  \Delta_g b_\xi(x) = \alpha\bigl(b_\xi(x)\bigr)
\end{equation}
for some continuous $\alpha$. Then necessarily
$\alpha(\R)\subset[(n-1)\sqrt{\kappa_2},\,(n-1)\sqrt{\kappa_1}]$ by
Lemma~\ref{lem:comparison}, and:
\begin{enumerate}
\item[\textup{(i)}] every solution $U\in C^2(\R)$ of
\begin{equation}\label{eq:nonauto-main}
  U''(t)+\alpha(t)\,U'(t)=W'(U(t))
\end{equation}
yields, via $u:=U\circ b_\xi$, a Busemann profile solving
$\Delta_g u=W'(u)$ on $\mathcal M$;
\item[\textup{(ii)}] conversely, if $u=U\circ b_\xi$ solves
$\Delta_g u=W'(u)$ on $\mathcal M$, then $U$ solves
\eqref{eq:nonauto-main} on all of $\R$. In either case,
$u(\mathcal M)=U(\R)$, so $u$ is bounded if and only if $U$ is;
\item[\textup{(iii)}] if $U$ is strictly monotone with finite limits
$\ell_\pm$, then for every
$s\in(\min\{\ell_-,\ell_+\},\max\{\ell_-,\ell_+\})$ the level set
$\{u=s\}$ is the single horosphere $\{b_\xi=U^{-1}(s)\}$.
\end{enumerate}
Consequently, existence and nonexistence for \eqref{eq:nonauto-main} are
equivalent to existence and nonexistence \emph{within the class of
Busemann profiles} on $\mathcal M$, and the correspondence preserves
boundedness, finite asymptotic limits, and monotonicity.
\end{proposition}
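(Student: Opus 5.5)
The plan is to reduce everything to Lemma~\ref{lem:invariant} together with the global flow structure of Proposition~\ref{prop:global}. The range constraint on $\alpha$ comes first. Lemma~\ref{lem:comparison} gives $(n-1)\sqrt{\kappa_2}\le\Delta_g b_\xi\le(n-1)\sqrt{\kappa_1}$ pointwise on $\mathcal M$. Since $b_\xi$ is surjective onto $\R$ (every level set $\Sigma_t$ is nonempty by Proposition~\ref{prop:global}(i)), every $t\in\R$ equals $b_\xi(x)$ for some $x$. Then $\alpha(t)=\Delta_g b_\xi(x)$ lies in that interval, so $\alpha(\R)\subset[(n-1)\sqrt{\kappa_2},(n-1)\sqrt{\kappa_1}]$.

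For (i), take $U\in C^2(\R)$ solving \eqref{eq:nonauto-main}. Lemma~\ref{lem:invariant} gives, at any $x\in\mathcal M$,
\[
\Delta_g u(x)=U''(b_\xi(x))+U'(b_\xi(x))\,\alpha(b_\xi(x))=W'(U(b_\xi(x)))=W'(u(x)),
\]
by \eqref{eq:busemann-alpha} and the ODE evaluated at $t=b_\xi(x)$. Equivalently, \eqref{eq:reduced} holds, which is exactly Lemma~\ref{lem:reduction}.

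For (ii), suppose $u=U\circ b_\xi$ solves $\Delta_g u=W'(u)$ on $\mathcal M$. Lemma~\ref{lem:reduction} gives \eqref{eq:reduced} at every $x$. Fix $t\in\R$ and choose $x\in\Sigma_t$, which is possible since $\Sigma_t\neq\emptyset$; alternatively, use a single complete flow line $\varphi$ with $b_\xi(\varphi(t))=t$. Substituting $\Delta_g b_\xi(x)=\alpha(t)$ yields \eqref{eq:nonauto-main} at $t$. Since $t$ was arbitrary, the ODE holds on all of $\R$. The identity $u(\mathcal M)=U(b_\xi(\mathcal M))=U(\R)$ again uses only surjectivity of $b_\xi$, and the boundedness equivalence follows immediately from it.

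For (iii), suppose $U$ is strictly monotone with finite limits. Then $U$ is a bijection from $\R$ onto the open interval between $\ell_-$ and $\ell_+$, so for $s$ in that interval $t_s:=U^{-1}(s)$ is uniquely defined. It follows that $\{u=s\}=\{x: U(b_\xi(x))=s\}=\{b_\xi=t_s\}=\Sigma_{t_s}$, exactly as in the final paragraph of the proof of Proposition~\ref{prop:compatibility-front}. The closing ``consequently'' statement is a rephrasing of (i) and (ii): the map $U\mapsto U\circ b_\xi$ is a bijection between $C^2$ solutions of \eqref{eq:nonauto-main} and Busemann-profile solutions. Injectivity holds because $b_\xi$ is onto. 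Boundedness is preserved by (ii). Finite asymptotic limits and monotonicity are properties of $U$ itself, and they transfer along every flow line because $b_\xi(\Phi_t(x))=b_\xi(x)+t$.

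I do not expect a genuine obstacle. The only point needing care is surjectivity of $b_\xi$, which makes $\alpha$ well defined on all of $\R$ and gives $u(\mathcal M)=U(\R)$; this is supplied by Proposition~\ref{prop:global}. Continuity of $\alpha$ is assumed in the hypothesis, and it is in any case automatic from $\alpha(t)=\Delta_g b_\xi(\varphi(t))$ along a flow line.
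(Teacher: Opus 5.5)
Your proposal is correct and follows the paper's proof essentially step for step. You use Lemma~\ref{lem:invariant} with \eqref{eq:busemann-alpha} for (i), surjectivity $b_\xi(\mathcal M)=\R$ from Proposition~\ref{prop:global} for (ii) and for $u(\mathcal M)=U(\R)$, and the bijection of $U$ onto the open interval between $\ell_-$ and $\ell_+$ for (iii); your explicit derivation of $\alpha(\R)\subset[(n-1)\sqrt{\kappa_2},(n-1)\sqrt{\kappa_1}]$ from Lemma~\ref{lem:comparison} plus surjectivity, which the paper only asserts in the statement, is a correct addition.
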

\begin{proof}
By Lemma~\ref{lem:invariant} and \eqref{eq:busemann-alpha},
\[
\Delta_g(U\circ b_\xi) = U''(b_\xi)+\alpha(b_\xi)\,U'(b_\xi).
\]
This proves (i). Conversely, if $u=U\circ b_\xi$ solves the PDE, the
same identity gives \eqref{eq:nonauto-main} for every
$t\in b_\xi(\mathcal M)$; by Proposition~\ref{prop:global}(ii),
$b_\xi(\mathcal M)=\mathbb R$, so the ODE holds on all of $\mathbb R$,
proving (ii). Since $u(\mathcal M)=U(b_\xi(\mathcal M))=U(\mathbb R)$,
boundedness of one is equivalent to that of the other. For (iii), a
strictly monotone continuous function with finite limits $\ell_\pm$ is a
bijection of $\mathbb R$ onto
$\bigl(\min\{\ell_-,\ell_+\},\max\{\ell_-,\ell_+\}\bigr)$, so for each
$s$ in that interval there is a unique $t_s=U^{-1}(s)$, and
$\{u=s\}=\{U(b_\xi)=s\}=\{b_\xi=t_s\}$.
\end{proof}

\begin{proposition}[Warped realization under a Riccati bound]
\label{prop:concrete-realization}
Let $\beta:\mathbb R\to(0,\infty)$ be smooth and non-decreasing, and
suppose in addition that
\[
0<\beta_-:=\lim_{t\to-\infty}\beta(t)\;\le\;\beta(t)\;\le\;
\beta_+:=\lim_{t\to+\infty}\beta(t)<\infty
\qquad(t\in\mathbb R),
\]
both limits existing by monotonicity and being finite by this
hypothesis. Set
\[
  \mathcal M = \mathbb R_t \times \mathbb R^{n-1}_s,
  \qquad g = dt^2 + f(t)^2 |ds|^2,
  \qquad f(t) = \exp\!\Big(\int_0^t \beta\Big).
\]
If $\beta$ satisfies the Riccati inequality
\begin{equation}\label{eq:riccati}
  \beta'(t) \;\le\; \beta_+^2 - \beta(t)^2 \qquad \text{for all } t,
\end{equation}
then $(\mathcal M,g)$ is a complete Cartan--Hadamard manifold, pinched as
$-\beta_+^2\le K\le-\beta_-^2$; the coordinate $t$ is exactly the
Busemann function $b_\xi$ at the ideal point $\xi$ represented by the
rays $\tau\mapsto(-\tau,s)$ ($\tau\ge0$, any $s\in\mathbb R^{n-1}$),
normalized by $b_\xi(0,s_0)=0$; and $\Delta_g b_\xi=(n-1)\beta(t)$,
giving \eqref{eq:busemann-alpha} with $\alpha=(n-1)\beta$. In
particular, nonconstant choices of $\beta$ satisfying the above
assumptions provide Cartan--Hadamard manifolds with nonconstant Busemann
drift.
\end{proposition}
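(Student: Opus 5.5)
The plan is to verify the four assertions in turn: the metric is complete and Cartan--Hadamard with the stated pinching; $t$ is the Busemann function $b_\xi$; and $\Delta_g t=(n-1)\beta$. All of this uses the explicit structure of the warped product $g=dt^2+f(t)^2|ds|^2$ over the flat fibre $\mathbb R^{n-1}$, with $f'/f=\beta$ and $f''/f=\beta'+\beta^2$.

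\emph{Completeness and topology.} $\mathcal M$ is diffeomorphic to $\mathbb R^n$, so it is simply connected. Completeness follows from the standard fact that a warped product $B\times_f F$ with $B,F$ complete and $f>0$ smooth is complete. Directly: a Cauchy sequence has bounded $t$-coordinates, since $t$ is $1$-Lipschitz. On a slab $|t|\le T$ the metric is bi-Lipschitz to the Euclidean one, because $f$ is bounded above and below there, so the sequence converges.

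\emph{Curvature.} Take the orthonormal frame $e_0=\partial_t$, $e_i=f^{-1}\partial_{s_i}$. For a warped product over a flat fibre, the curvature operator is diagonal on the bivectors $e_0\wedge e_i$ and $e_i\wedge e_j$, with eigenvalues
\[
K(e_0,e_i)=-\frac{f''}{f}=-(\beta'+\beta^2),\qquad K(e_i,e_j)=-\frac{(f')^2}{f^2}=-\beta^2 .
\]
For a unit decomposable bivector $\sigma=\sum c_{ab}\,e_a\wedge e_b$ with $\sum c_{ab}^2=1$, the sectional curvature is $\langle\mathcal R\sigma,\sigma\rangle=\sum c_{ab}^2\lambda_{ab}$. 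It therefore lies between the smallest and largest eigenvalue, and it suffices to bound the eigenvalues.
\begin{itemize}
\item Mixed eigenvalue: since $\beta'\ge0$ and $\beta\ge\beta_->0$, we get $\beta'+\beta^2\ge\beta_-^2$, and the Riccati inequality \eqref{eq:riccati} gives $\beta'+\beta^2\le\beta_+^2$.
\item Tangential eigenvalue: $\beta_-^2\le\beta^2\le\beta_+^2$.
\end{itemize}
Hence $-\beta_+^2\le K\le-\beta_-^2<0$, and in particular $\mathcal M$ is Cartan--Hadamard.

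\emph{Busemann function.} Since $|\nabla t|=1$ and $\nabla_{\nabla t}\nabla t=0$ (the $t$-lines are geodesics of any warped product), each curve $\gamma(\tau)=(-\tau,s_0)$ is a unit-speed geodesic ray. Because $t$ is $1$-Lipschitz, it is also minimizing. For $x=(t,s)$ I will squeeze $d(x,\gamma(T))-T$ from both sides.
\begin{itemize}
\item Lower bound: the $1$-Lipschitz property of $t$ gives $d(x,\gamma(T))\ge t+T$.
\item Upper bound: go down the $t$-line from $(t,s)$ to $(-T,s)$, at length $t+T$, and then along the straight horizontal segment in the slice $\{-T\}\times\mathbb R^{n-1}$, at length $f(-T)|s-s_0|$. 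This gives $d(x,\gamma(T))\le t+T+f(-T)|s-s_0|$.
\end{itemize}
Since $f(-T)\le e^{-\beta_-T}\to0$, the limit is $b_\xi(x)=t$, which is normalized so that $b_\xi(0,s_0)=0$. Any other base ray $(-\tau,s)$ is asymptotic to $\gamma$, because the distance between the two is at most $f(-\tau)|s-s_0|\to0$. So all these rays define the same ideal point $\xi$.

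\emph{Laplacian.} In these coordinates the volume density is $f^{n-1}$, so
\[
\Delta_g t=\partial_t\log f^{n-1}=(n-1)\beta(t),
\]
which is \eqref{eq:busemann-alpha} with $\alpha=(n-1)\beta$. A nonconstant $\beta$ gives nonconstant drift. One example is the Riccati solution $\beta=\beta_+\tanh$-type shifted, or any slowly increasing $\beta$ with $\beta'$ small, which keeps the required inequality.

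I expect the main point needing care to be the sectional curvature of mixed planes, i.e.\ justifying diagonality of the curvature operator. I would do this via O'Neill's warped-product formulas, which show that $R(e_0,e_i)e_j$ and $R(e_i,e_j)e_0$ vanish for distinct indices. The rest is routine.
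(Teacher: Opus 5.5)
Your proof is correct and follows essentially the same route as the paper: completeness via the $1$-Lipschitz coordinate $t$ and slab estimates; the mixed and tangential curvatures $-(\beta'+\beta^2)$ and $-\beta^2$ bounded using $\beta'\ge0$, $\beta_-\le\beta\le\beta_+$ and the Riccati inequality; the squeeze $t\le d(x,\gamma(T))-T\le t+f(-T)|s-s_0|$ with $f(-T)\le e^{-\beta_-T}$; and $\Delta_g t=(n-1)f'/f$, which you compute from the volume density $f^{n-1}$ rather than from the second fundamental form, an immaterial difference. Your diagonal-curvature-operator remark usefully extends the pinching to non-coordinate planes, which the paper leaves implicit; the only inaccuracy is in your closing aside on examples, which the statement does not need, since a small $\beta'$ alone does not give the Riccati inequality (the inequality forces $e^{2\beta_+t}(\beta_+-\beta)$ to be nondecreasing, so it fails whenever $0<\beta_+-\beta=o(e^{-2\beta_+t})$).
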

\begin{proof}
\emph{Curvature.} The sectional curvatures are
$K_{\mathrm{mix}} = -f''/f = -(\beta' + \beta^2)$ on planes
$\partial_t \wedge \partial_{s_i}$, and
$K_{\mathrm{tan}} = -(f'/f)^2 = -\beta^2$ on tangential planes (present
when $n \ge 3$). Since $\beta \ge \beta_-$, the tangential curvatures
lie in $[-\beta_+^2, -\beta_-^2]$ automatically. The mixed curvatures
satisfy $K_{\mathrm{mix}} \le -\beta_-^2$ (as $\beta' \ge 0$), but the
lower bound $K_{\mathrm{mix}} \ge -\beta_+^2$ holds if and only if
$\beta$ obeys \eqref{eq:riccati}, which does \emph{not} follow from
monotonicity alone (without the stated two-sided bound, monotonicity
and positivity alone only guarantee $\beta_-\in[0,\infty)$ and
$\beta_+\in(0,\infty]$, e.g.\ $\beta(t)=e^t/(1+e^t)$ is smooth,
positive, and non-decreasing, yet $\beta_-=0$; the identification of
$t$ with a Busemann function below uses $\beta_->0$ directly, and
\eqref{eq:riccati} itself is vacuous unless $\beta_+<\infty$). Under
\eqref{eq:riccati} the metric is therefore pinched as
$-\beta_+^2 \le K \le -\beta_-^2$.

\emph{Completeness.} Since $g\ge dt^2$ pointwise, every curve $\eta$
satisfies $\mathrm{length}_g(\eta)\ge\bigl|\!\int t'\bigr|\ge|\Delta t|$
along it, so $d_g(x,y)\ge|t(x)-t(y)|$ for all $x,y\in\mathcal M$; as in
the warped-line class (A) of Section~\ref{sec:warped}, for a curve
$\gamma(u)=(t(u),s(u))$ of $g$-length $L$ starting at $t(0)=t_0$,
$|t(u)-t_0|\le L$ for every $u$, so the entire curve stays in the slab
$[t_0-L,t_0+L]$. On that slab the continuous positive function $f$ has
a positive minimum $m$, and $g\ge dt^2+m^2|ds|^2$ there controls the
$s$-coordinate as well. Hence a $d_g$-Cauchy sequence is Cauchy in the
$t$- and, once $t$ is bounded, in the $s$-coordinates, and thus
converges in $(\mathcal M,g)$; by Hopf--Rinow, $(\mathcal M,g)$ is
complete, hence Cartan--Hadamard (the curvature computation above gives
$K\le0$).

\emph{Identification of $t$ with a Busemann function.} Fix
$s_0\in\mathbb R^{n-1}$ and let $\gamma(\tau):=(-\tau,s_0)$,
$\tau\ge0$, the unit-speed geodesic ray obtained by restricting the
flow line of $\partial_t$ through $(0,s_0)$ to $\tau\ge0$ with reversed
orientation. Since $\beta\ge\beta_->0$,
\[
f(-R)=\exp\!\left(-\int_{-R}^{0}\beta(\tau)\,d\tau\right)
\le e^{-\beta_- R}\longrightarrow0 \qquad (R\to\infty),
\]
so any two vertical rays $\{s=s_0\}$ and $\{s=s_1\}$ are asymptotic:
their $s$-separation at parameter $-R$ has length $f(-R)|s_0-s_1|\to0$,
and hence every ray $\tau\mapsto(-\tau,s)$, for any $s$, represents the
same ideal point $\xi$ as $\gamma$. For $x=(t,s)\in\mathcal M$ and
$R>0$ large, the vertical-then-horizontal path from $x$ through
$(-R,s)$ to $\gamma(R)=(-R,s_0)$ has length $t+R+f(-R)|s-s_0|$, giving
the upper bound
\[
d(x,\gamma(R))-R \;\le\; t+f(-R)|s-s_0| \;\longrightarrow\; t
\qquad(R\to\infty),
\]
while $g\ge dt^2$ forces every curve joining $x$ to $\gamma(R)$ to
traverse a $t$-displacement of at least $t+R$, so
$d(x,\gamma(R))\ge t+R$ and hence $d(x,\gamma(R))-R\ge t$ for all $R$.
Combining the two bounds,
\[
d(x,\gamma(R))-R \;\longrightarrow\; t \qquad (R\to\infty),
\]
so $t$ is exactly the Busemann function $b_\xi$ (with the
normalization $b_\xi(0,s_0)=0$) associated to this ray. Since
$|\nabla t|\equiv1$, Proposition~\ref{prop:flow-geodesic} gives
$\Hess t(\partial_t,\cdot)=0$, so $\Delta_g t$ is the trace of the
second fundamental form of the leaf. For the metric
$g=dt^2+f(t)^2|ds|^2$ one has $\Gamma^{t}_{ij}=-f f'\delta_{ij}$,
hence, in the leaf orthonormal frame $e_i:=f^{-1}\partial_{s_i}$,
\[
  \mathrm{II}(e_i,e_j)=\Hess t(e_i,e_j)=-f^{-2}\Gamma^{t}_{ij}
  =\frac{f'(t)}{f(t)}\,\delta_{ij},
\]
so $\mathrm{II}$ is $(f'/f)$ times the identity and
$\Delta_g t=(n-1)f'/f=(n-1)\beta(t)$, giving \eqref{eq:busemann-alpha}
with $\alpha=(n-1)\beta$.
\end{proof}

The geometric content of Proposition~\ref{prop:dictionary} is the
interpretation of the horosphere-dependent drift coefficient as the
horospherical mean-curvature function in the Busemann setting;
Proposition~\ref{prop:concrete-realization} realizes a concrete class of
such coefficients by variable-curvature Cartan--Hadamard metrics.

Equation \eqref{eq:nonauto-main} itself is classical: with
$\rho(t):=\exp\int_0^t\alpha$ it is the Sturm--Liouville form
$(\rho U')'=\rho\,W'(U)$, and specific instances arise for invariant
solutions on $\Hy^n$: radial solutions give $\alpha(r)=(n-1)\coth r$,
and $H_{\mathrm h}$-invariant ones $\alpha(t)=(n-1)\tanh t$
\cite[\S2]{BirindelliMazzeo2009}. The inverse existence problem for
heterogeneous drifts satisfying the pinching bounds of
Proposition~\ref{prop:dictionary} is Problem~A in the Open problems
subsection below.

\begin{remark}[Geometry of the constant-drift hypothesis]
\label{rem:constant-drift-geometry}
The hypothesis $\Delta b\equiv c$ is restrictive but far from vacuous:
\begin{itemize}
\item \textbf{Hyperbolic space.} Every Busemann function on $\Hy^n$ has
  $\Delta b\equiv n-1$ (Example~\ref{ex:H2}); the same constant
  horospherical drift underlies the horospheric-wave propagation
  studied by Matano, Punzo, and Tesei~\cite{MatanoPunzoTesei2015}.
\item \textbf{Harmonic manifolds.} On a harmonic Cartan--Hadamard
  manifold the horospheres have constant mean curvature, so
  $\Delta b\equiv c$ for every Busemann function; the flat case
  $\mathcal M=\mathbb R^n$ is harmonic in this sense with $c=0$. Under
  strictly negative curvature, however, $c>0$: besides the rank-one
  symmetric spaces of noncompact type, this class contains the
  nonsymmetric Damek--Ricci spaces \cite{DamekRicci1992,Heber2006},
  which are harmonic and Einstein without being symmetric, and satisfy
  $\Delta b\equiv c>0$.
\end{itemize}
Note that constancy of the trace is much weaker than the affine
condition $\Hess b\equiv0$: since $\Delta b\equiv c>0$, the relevant
horospheres cannot be totally geodesic, so the affine splitting
mechanism of Remark~\ref{rem:orientation} does not apply to this
Busemann function. This does not, by itself, preclude some other
product splitting of $\mathcal M$, as illustrated by
$\Hy^2\times\mathbb R$.

The two main results therefore cover complementary regimes:
Proposition~\ref{prop:compatibility-front} applies when the mean
curvature is \emph{exactly constant} and $W$ is unbalanced at the
Fife--McLeod speed; Theorem~\ref{thm:main} applies when it is merely
\emph{uniformly positive} and the limits sit at equal levels of $W$.
\end{remark}

The dissipation identity \eqref{eq:integral-identity}, evaluated on a
Fife--McLeod front, gives
$\tau^*(W')\int_{\R}(U')^2=W(1)-W(-1)$, whence
\[
\operatorname{sign}\tau^*(W')=\operatorname{sign}\bigl(W(1)-W(-1)\bigr),
\]
with $\tau^*(W')=0$ exactly in the balanced case (for the same
bistable class of Proposition~\ref{prop:compatibility-front}).
Theorem~\ref{thm:main} and Proposition~\ref{prop:compatibility-front}
are thus two readings of the same dissipation identity: the former says
that a positive drift is incompatible with equal potential levels for a
nonconstant profile, while the latter states that a strictly increasing
Busemann front joining $-1$ to $1$ exists if and only if
$c=\tau^*(W')$.

Since $\tau^*$ is determined by the nonlinearity alone and is unique,
for any $c\neq\tau^*(W')$ there is no strictly increasing Busemann
front joining $-1$ to $1$ on a manifold with $\Delta_g b\equiv c$.

\section{Warped-line and rotational surfaces: existence and nonexistence
of symmetric layers}\label{sec:warped}

In this section we specialize to surfaces, where the ambient metric is
built from a smooth warping function, so, in contrast with
Remark~\ref{rem:coordinates}, all objects are smooth and the coordinate
form of the operator is exact by construction.

\subsection*{Standing assumptions on the potential}\label{subsec:standing}
Throughout, $W\in C^2(\R)$ is a balanced double-well potential in the
sense of Definition~\ref{def:balanced}, satisfying in addition
\begin{align}\label{eq:W-assumptions}
\begin{aligned}
  W>0 \ \text{on}\ (-1,1),\quad W(\pm1)=W'(\pm1)=0,\\
  W''(\pm1)>0,\quad W\ \text{even},\quad W'<0\ \text{on}\ (0,1).
\end{aligned}
\end{align}
The model case is $W(u)=\tfrac14(1-u^2)^2$. No sign or growth condition
is imposed outside $[-1,1]$; where a result requires the profile to
take values in $[-1,1]$, this is stated explicitly as a hypothesis.

\subsection*{The two symmetric classes}\label{subsec:classes}

\emph{(A) Warped-line surfaces.} $\mathcal M=\R^2_{(t,s)}$,
$g=dt^2+f(t)^2ds^2$, with $f\in C^\infty(\R)$, $f>0$, $f''\ge0$. Then
$K=-f''/f\le0$. Completeness follows from $g\ge dt^2$: for any curve
$\gamma(u)=(t(u),s(u))$ of length $L$ starting at $t(0)=t_0$, one has
$|t(u)-t_0|\le L_g(\gamma|_{[0,u]})\le L$ for every $u$, so the entire
curve, not merely its endpoint, stays in the slab $[t_0-L,t_0+L]$. On
that slab, the continuous positive function $f$ has a positive minimum
$m$, and $g\ge dt^2+m^2|ds|^2$ there controls the $s$-coordinate along
the curve as well. Hence a Cauchy sequence for $d_g$ is Cauchy in the
$t$- and, once $t$ is bounded, in the $s$-coordinates, and thus
converges in $(\mathcal M,g)$; by the Hopf--Rinow theorem
$(\mathcal M,g)$ is complete, hence Cartan--Hadamard.

For the leaves $\{t=\mathrm{const}\}$, the unit tangent field is
$e_s:=f(t)^{-1}\partial_s$. Since $\Gamma^t_{ss}=-f(t)f'(t)$, we have
$\mathrm{II}(e_s,e_s)=\Hess t(e_s,e_s)=f'(t)/f(t)$. As
$\Hess t(\partial_t,\cdot)=0$, it follows that
\[
\Delta_g t=\frac{f'(t)}{f(t)}=:a(t).
\]
Thus $a(t)$ is the mean curvature of the leaf with respect to
$\partial_t$. A leaf is a geodesic exactly when $f'(t)=0$; under
$f''\ge0$ the set $\{f'=0\}$ is an interval, possibly empty or a single
point. On $\Hy^2$: $f(t)=\cosh t$ gives the equidistants of a geodesic,
with a geodesic leaf at $t=0$; $f(t)=e^t$ gives the horocycles, with
$a\equiv1$ and no geodesic leaf (cf.\ Example~\ref{ex:H2}).

\emph{(B) Rotational surfaces.} $\mathcal M=\R^2$,
$g=dr^2+f(r)^2d\theta^2$, $\theta\in\R/2\pi\mathbb Z$, where $f$ is the
restriction to $[0,\infty)$ of a smooth \emph{odd} function on $\R$
with $f'(0)=1$, the standard polar smoothness condition ensuring that
$g$ extends smoothly across the pole, with $f>0$ on $(0,\infty)$ and
$f''\ge0$. Then $K=-f''/f\le0$, and since $f'$ is nondecreasing with
$f'(0)=1$ we get $f'\ge1$, hence $a=f'/f>0$: \emph{no leaf is a
geodesic}, consistently with the absence of closed geodesics on
Cartan--Hadamard manifolds. Note, however, that $f'\ge1$ and $f(0)=0$
force $f(r)\ge r$, so $a(r)\le f'(r)/r$; for many natural examples of
subexponential growth, including the Euclidean case $f(r)=r$, one has
$\inf a=0$, whereas for $f(r)=\sinh r$ one has $a(r)=\coth r$ and
$\inf a=1$. In class \textup{(B)} the uniform drift hypothesis of
Theorem~\ref{thm:main} therefore fails typically rather than
exceptionally (and Theorem~\ref{thm:main} does not apply in this class
regardless, since $r$ is not a Busemann function), which is why a
separate mechanism (Theorem~\ref{thm:radial}) is needed there.

In this section the coordinates $t$ and $r$ are geometric coordinates
of the warped models and are not assumed to be Busemann functions; the
reduced ODEs below hold independently of the Busemann setting. When
comparison with Theorem~\ref{thm:main} is made, it concerns the shared
dissipation mechanism, not an identification of these coordinates with
Busemann functions (cf.\ Remark~\ref{rem:no-drift-examples}).

In both classes a profile $u=U(t)$ (resp.\ $U(r)$) solves \eqref{eq:AC}
if and only if
\begin{equation}\label{eq:ode-warped}
  U''+a\,U'=W'(U),\qquad a=\frac{f'}{f},
\end{equation}
with $U'(0)=0$ in the rotational case, since smoothness of $u$ at the
pole forces the radial derivative to vanish there. As in
\eqref{eq:dissipation}, $H:=\tfrac12(U')^2-W(U)$ satisfies
$H'=-a\,(U')^2$.

\begin{remark}[Failure of uniform positive drift: what is and is not
ruled out]\label{rem:no-drift-examples}
When $a\ge c>0$ fails, the dissipation mechanism of
Theorem~\ref{thm:main} does not apply directly, and nonconstant
equal-level profiles may or may not exist, depending on finer features
of the geometry. Two contrasting instances illustrate this.

\begin{enumerate}
\item \textbf{Sign-changing drift in a non-Busemann coordinate:
existence.} In class \textup{(A)} take $f(t)=\cosh t$, so
$a(t)=\tanh t$ vanishes at $t=0$ and changes sign. Here $t$ is the
signed-distance coordinate to a complete geodesic of $\Hy^2$, not a
Busemann function (indeed $\Delta t=\tanh t$ changes sign, whereas
every Busemann function satisfies $\Delta b\ge0$ by convexity); thus
neither Theorem~\ref{thm:main} nor Theorem~\ref{thm:translational}
applies, the former because $t$ is not Busemann, the latter because
$a$ is not nonnegative on $\mathbb R$. A nonconstant equal-level
profile does exist here: Theorem~\ref{thm:existence} provides an odd,
strictly increasing solution of $U''+\tanh(t)\,U'=W'(U)$ with
$U(\pm\infty)=\pm1$, whose nodal set is the geodesic leaf $\{t=0\}$;
compare \cite[Prop.~2.4]{BirindelliMazzeo2009}. Positivity of the
drift is thus essential to the dissipation mechanism, but this example
does not constitute a sharpness test within the Busemann class of
Theorem~\ref{thm:main}.

\item \textbf{Drift vanishing at infinity: nonexistence persists, by a
different mechanism.} In class \textup{(B)} take $f(r)=r$, the
Euclidean plane, so $a(r)=1/r>0$ but $\inf a=0$ and
Theorem~\ref{thm:main} does not apply. Nevertheless
Theorem~\ref{thm:radial} still rules out nonconstant radial profiles
with values in $[-1,1]$ and limit in $\{\pm1\}$: the condition
$U'(0)=0$ at the pole pins the Hamiltonian at
$H(0)=-W(U(0))\le0$, and monotonicity of $H$ together with
$H(\infty)=0$ forces $H\equiv0$. The pole substitutes for global
uniform dissipation.

This concerns \emph{radial} profiles only. Nonconstant entire
solutions of \eqref{eq:AC} on $\R^2$ certainly exist, the planar layer
$u(x_1,x_2)=U_0(x_1)$, with $U_0$ the classical heteroclinic of
$U_0''=W'(U_0)$, but they are not radial, and so lie outside the
profile ansatz used here.
\end{enumerate}

The horocyclic case $f(t)=e^t$, $a\equiv1$, is \emph{not} an instance
of failing drift: there $\inf a=1>0$ and Theorem~\ref{thm:main}
applies, so no equal-level profile exists
(Corollary~\ref{cor:horocyclic}). The nonconstant solution of
$U''+U'=W'(U)$ in \cite[Prop.~2.2]{BirindelliMazzeo2009} connects
\emph{unequal} potential levels; see Remark~\ref{rem:sharpness}.
\end{remark}

\begin{theorem}[Nonexistence: monotone warpings]
\label{thm:translational}
In class \textup{(A)}, assume $a=f'/f\ge0$ with $a\not\equiv0$
(equivalently, $f$ nondecreasing and nonconstant). Let $U\in C^2(\R)$
solve \eqref{eq:ode-warped} with finite limits $\ell_\pm$ at
$\pm\infty$ satisfying $W(\ell_-)=W(\ell_+)$. Then $U$ is constant. In
particular, no entire solution of \eqref{eq:AC} depending only on $t$
(i.e.\ $\partial_s$-invariant) on such a surface connects the two
wells $\pm1$ across the foliation. The case $a\le0$ reduces to this
one via $t\mapsto-t$, which exchanges $\ell_-$ and $\ell_+$; the
balance hypothesis being symmetric, the conclusion is unchanged.
\end{theorem}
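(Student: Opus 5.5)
The argument follows the proof of Corollary~\ref{cor:nonnegative-drift}, with the flow-line coefficient now the explicit drift $a=f'/f$ of the warped model. Since $W\in C^2(\R)$ by the standing assumptions, $W'$ is locally Lipschitz, so Cauchy uniqueness is available. We set $H:=\tfrac12(U')^2-W(U)$. As recorded after \eqref{eq:ode-warped}, $H'=-a\,(U')^2\le0$ because $a\ge0$, so $H$ is nonincreasing on $\R$ and has limits $H(\pm\infty)$ in the extended reals.

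First we identify these limits exactly as in the proof of Theorem~\ref{thm:main}. The sequential argument uses only the finite limits $\ell_\pm$ and the monotonicity of $H$, not any positive lower bound on the drift. The mean value theorem on $[m,m+1]$ gives $\tau_m$ with $U'(\tau_m)\to0$ and $U(\tau_m)\to\ell_\pm$. Hence $H(\tau_m)\to-W(\ell_\pm)$, and by monotonicity $H(\pm\infty)=-W(\ell_\pm)$. The equal-level hypothesis gives $H(-\infty)=H(+\infty)$. A monotone function with equal limits at both ends is constant, so $H'\equiv0$, that is,
\[
a(t)\,U'(t)^2\equiv0 \qquad (t\in\R).
\]

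Next, since $a\not\equiv0$ and $a$ is continuous (indeed smooth, as $f$ is smooth and positive), there is $t_0$ with $a(t_0)>0$, and hence an open interval $I\ni t_0$ on which $a>0$. On $I$ we get $U'\equiv0$, so $U\equiv q$ there. Evaluating \eqref{eq:ode-warped} on $I$ gives $W'(q)=0$. The constant function $q$ then solves \eqref{eq:ode-warped} with the same Cauchy datum $(q,0)$ at $t_0$. The first-order system $(U,V)\mapsto(V,\,W'(U)-a(t)V)$ is continuous in $t$ and locally Lipschitz in $(U,V)$, so uniqueness gives $U\equiv q$ on $\R$.

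The ``in particular'' clause follows because a $\partial_s$-invariant solution $u=U(t)$ of \eqref{eq:AC} satisfies \eqref{eq:ode-warped}, and a connection between the wells $\pm1$ has $W(\ell_-)=W(\ell_+)=0$; the previous step then makes $U$ constant, so no nonconstant connection exists. For the case $a\le0$, we put $\tilde U(t):=U(-t)$. Then $\tilde U''-a(-t)\tilde U'=W'(\tilde U)$, the new drift $-a(-t)\ge0$ is not identically zero, and the limits are swapped. Since $W(\ell_-)=W(\ell_+)$ is symmetric in the two limits, the first case applies to $\tilde U$.

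The only delicate point is that, without a uniform positive lower bound on $a$, we cannot argue through $\int a\,(U')^2$ alone. The argument therefore has to pass through local positivity of $a$ followed by Cauchy uniqueness, and this is exactly where the $C^2$ regularity of $W$ in \eqref{eq:W-assumptions} is used.
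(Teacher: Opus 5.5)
Your proof is correct and follows the paper's argument essentially step for step: $H$ is nonincreasing because $a\ge0$, the sequential argument gives $H(\pm\infty)=-W(\ell_\pm)$, hence $a\,(U')^2\equiv0$, and local positivity of $a$ combined with Cauchy uniqueness (available since $W\in C^2$) yields $U\equiv q$. You also check explicitly that $\tilde U(t)=U(-t)$ has drift $-a(-t)\ge0$, and you spell out the ``in particular'' clause; both are details the paper leaves implicit.
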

\begin{proof}
Exactly as in the proof of Theorem~\ref{thm:main}, with $\alpha=a\ge0$
in place of $\alpha\ge c>0$: $H$ is nonincreasing, the sequential
argument identifies $H(\pm\infty)=-W(\ell_\pm)$, and the balance
hypothesis forces $H$ constant, hence
\[
  a(t)\,U'(t)^2\equiv 0 \qquad (t\in\R).
\]

Since $a\not\equiv0$ and $a=f'/f$ is continuous, there is an open
interval $I$ on which $a>0$; hence $U'\equiv0$ on $I$, so $U\equiv q$
there and \eqref{eq:ode-warped} gives $W'(q)=0$. Fix $t_1\in I$. Then
$(U,U')(t_1)=(q,0)$ and the constant $q$ solves \eqref{eq:ode-warped}
with the same datum; as $W\in C^2$ by \eqref{eq:W-assumptions}, the
field $(U,V)\mapsto(V,\,W'(U)-a(t)V)$ is locally Lipschitz in $(U,V)$,
so uniqueness for the Cauchy problem gives $U\equiv q$ on $\mathbb R$.
\end{proof}

Neither the uniform bound $\inf a>0$ nor the structure of the zero set
of $a$ is needed: the essential ingredient beyond the $C^1$ setting of
Theorem~\ref{thm:main} is Cauchy uniqueness, exactly as in
Corollary~\ref{cor:nonnegative-drift}, and the convexity of $f$ plays
no role in this step.

For constant $f$ the drift vanishes identically and
\eqref{eq:ode-warped} becomes $U''=W'(U)$, which admits the classical
heteroclinic joining $\pm1$ for any balanced double well. Some
nonvanishing of the drift is therefore indispensable: $a\not\equiv0$
here, $\Delta_g b\not\equiv0$ in Corollary~\ref{cor:nonnegative-drift},
and $\inf_{\mathcal M}\Delta_g b>0$ in Theorem~\ref{thm:main}, where no
local Lipschitz condition on $W'$ is imposed.

\begin{corollary}[No horocyclic two-well profiles on $\Hy^2$]
\label{cor:horocyclic}
The equation $U''+U'=W'(U)$ admits no nonconstant solution with finite
limits at equal potential levels; in particular, no bounded entire
solution of \eqref{eq:AC} on $\Hy^2$ invariant under a parabolic
one-parameter group of isometries connects the two wells $\pm1$. This
is the case $f(t)=e^t$ of Theorem~\ref{thm:translational}, the
constant-drift case of Theorem~\ref{thm:main}, and recovers for
$\Hy^2$ the parabolic nonexistence statement of
\cite[Prop.~2.2, first assertion]{BirindelliMazzeo2009}.
\end{corollary}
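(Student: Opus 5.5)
The corollary has two parts: an ODE statement and its geometric reading on $\Hy^2$. I will deduce the ODE statement from Theorem~\ref{thm:translational}, or equivalently from the dissipation identity of Theorem~\ref{thm:main} with constant drift. Then I will translate parabolic invariance into the Busemann-profile ansatz.

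For the ODE, let $U\in C^2(\R)$ solve $U''+U'=W'(U)$ with finite limits $\ell_\pm$ and $W(\ell_-)=W(\ell_+)$. Put $H=\tfrac12(U')^2-W(U)$. Then $H'=-(U')^2$, and the sequential mean-value argument from the proof of Theorem~\ref{thm:main} gives $H(\pm\infty)=-W(\ell_\pm)$. Integrating over $\R$ yields
\[
\int_\R (U')^2\,dt=W(\ell_+)-W(\ell_-)=0,
\]
so $U'\equiv0$. This is exactly the integral identity \eqref{eq:integral-identity} with $\alpha\equiv1$. No Lipschitz hypothesis is needed, because the drift is bounded below by $1$. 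Alternatively, one can quote Theorem~\ref{thm:translational} with $f(t)=e^t$, $a\equiv1$.

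For the geometric statement I will use the warped form $g=db^2+e^{2b}ds^2$ from Example~\ref{ex:H2}, with $b=-\log y$ and $\Delta b\equiv1$. Up to conjugation in $\operatorname{Isom}(\Hy^2)$, a parabolic one-parameter group is the group of horizontal translations $x\mapsto x+\lambda$ fixing $\xi=\infty$. Conjugation is harmless: it carries Busemann functions to Busemann functions up to additive constants, as noted in Example~\ref{ex:H2}. The orbits of this group are exactly the horocycles $\{b=\mathrm{const}\}$, so an invariant function has the form $u=U\circ b$. Smoothness of $U$ follows by restricting $u$ to the geodesic $x=0$, where $b$ is the arclength coordinate.

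By Lemma~\ref{lem:reduction}, $u$ solves \eqref{eq:AC} if and only if $U''+U'=W'(U)$. ``Connecting the two wells'' means $\ell_\pm\in\{-1,1\}$, which are equal potential levels since $W$ is balanced. The first part then forces $U$, and hence $u$, to be constant.

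The only delicate point is interpretive: making precise that the solution ``connects the wells.'' I will take this to mean that the limits of $U$ exist and lie in $\{\pm1\}$, as in Theorem~\ref{thm:main}. If only boundedness is assumed, Corollary~\ref{cor:limits-exist} supplies the limits, since $\Hy^2$ has $K\equiv-1$ and $W$ has isolated critical points under \eqref{eq:W-assumptions}. The case where both limits equal the same well is also covered, since the argument only uses $W(\ell_-)=W(\ell_+)$.
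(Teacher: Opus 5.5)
Your proposal is correct and is essentially the paper's argument: the corollary is the $\alpha\equiv1$ case of the dissipation identity \eqref{eq:integral-identity}, equivalently Theorem~\ref{thm:translational} with $f(t)=e^t$. It is combined with the identification of parabolic-invariant solutions on $\Hy^2$ with Busemann profiles $U\circ b$, $\Delta b\equiv1$, as in Example~\ref{ex:H2} and Remark~\ref{rem:sharpness}.

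One small inaccuracy in your optional boundedness remark: \eqref{eq:W-assumptions} says nothing about $W$ outside $[-1,1]$, so $W$ need not have isolated critical points globally. The proof of Corollary~\ref{cor:limits-exist} only needs this on the range of $U$, which holds if, e.g., $|u|\le1$. This does not affect your main argument, since the limits $\pm1$ are part of what ``connects the two wells'' means.
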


\begin{theorem}[Nonexistence: radial profiles]\label{thm:radial}
In class \textup{(B)}, let $u=U(r)$ be a radial solution of
\eqref{eq:AC} with values in $[-1,1]$ and
$\lim_{r\to\infty}U(r)\in\{-1,+1\}$. Then $U$ is constant, equal to
$+1$ or $-1$.
\end{theorem}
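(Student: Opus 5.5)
We follow the mechanism already sketched in Remark~\ref{rem:no-drift-examples}(2). Along $r\in[0,\infty)$ the profile solves \eqref{eq:ode-warped}, $U''+aU'=W'(U)$ with $a=f'/f>0$ on $(0,\infty)$, and $U'(0)=0$. The Hamiltonian $H=\tfrac12(U')^2-W(U)$ satisfies $H'=-a(U')^2\le0$ on $(0,\infty)$. Since $U\in C^2[0,\infty)$ (indeed $u$ is $C^2$ at the pole), $H$ is continuous on $[0,\infty)$ and nonincreasing. At the pole,
\[
H(0)=-W(U(0))\le 0,
\]
because $U(0)\in[-1,1]$ and $W\ge0$ on $[-1,1]$ by \eqref{eq:W-assumptions}.

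Next we identify $H(+\infty)$. The limit $\ell_+:=\lim_{r\to\infty}U(r)\in\{\pm1\}$ exists and is finite. The sequential argument from the proof of Theorem~\ref{thm:main} gives $H(+\infty)=-W(\ell_+)=0$: apply the mean value theorem on $[m,m+1]$ to get $\tau_m$ with $U'(\tau_m)\to0$, and use monotonicity of $H$. That argument uses only finite limits and monotonicity of $H$, not a positive lower bound on the drift. Hence $0=H(\infty)\le H(r)\le H(0)\le0$, so $H\equiv0$ on $[0,\infty)$.

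Therefore $a(r)U'(r)^2=-H'(r)\equiv0$. Since $a>0$ for $r>0$ (because $f'\ge1$ and $f>0$ there), this gives $U'\equiv0$ on $(0,\infty)$, so $U$ is constant, and by continuity $U\equiv\ell_+\in\{\pm1\}$. This constant indeed solves the equation since $W'(\pm1)=0$.

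The one point needing care is the behaviour at $r=0$, where $a=f'/f\sim1/r$ is singular. We only need $H$ continuous on $[0,\infty)$ and nonincreasing on the open half-line, which holds because $u$ is $C^2$ near the pole. We never integrate $aU'^2$ up to $r=0$. Also, $U'(0)=0$ is exactly the smoothness condition recorded before \eqref{eq:ode-warped}. No Cauchy uniqueness is required, so this step presents no real obstacle.
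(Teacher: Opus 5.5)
Your proposal is correct and follows the paper's proof essentially step by step. The pole condition gives $H(0)=-W(U(0))\le 0$, the sequential mean-value argument gives $H(\infty)=0$, monotonicity of the continuous $H$ squeezes $H\equiv 0$, and $a>0$ on $(0,\infty)$ forces $U'\equiv 0$. The only cosmetic difference is that you identify the constant as $\ell_+$ by continuity, whereas the paper uses $W(U(0))=-H(0)=0$ together with the fact that the only zeros of $W$ in $[-1,1]$ are $\pm1$.
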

\begin{proof}
Smoothness of $u$ at the pole gives $U'(0)=0$, so
$H(0)=-W(U(0))\le0$ by \eqref{eq:W-assumptions} and $|U(0)|\le1$.
Since $a>0$ on $(0,\infty)$, the dissipation identity $H'=-a(U')^2$
makes $H$ nonincreasing there.

To identify $H(\infty)$, argue as in Theorem~\ref{thm:main}: writing
$\ell:=U(\infty)\in\{\pm1\}$, the mean value theorem on $[m,m+1]$
gives $\tau_m\to\infty$ with $U'(\tau_m)\to0$, so
$H(\tau_m)\to-W(\ell)=0$; monotonicity of $H$ then forces
$H(\infty)=0$.

Hence $H(r)\ge0$ for every $r>0$, and since $H$ is continuous at the
pole, $H(0)\ge0$. With $H(0)\le0$ this gives $H(0)=0$, and
monotonicity squeezes $H\equiv0$ on $[0,\infty)$. Then $H'\equiv0$,
which by $a>0$ forces $U'\equiv0$, so $U\equiv U(0)$. Finally
$W(U(0))=-H(0)=0$, and the only zeros of $W$ in $[-1,1]$ are $\pm1$.
\end{proof}

The mechanism here differs from that of Theorem~\ref{thm:main}: no
uniform lower bound on $a$ is needed, only $a(r)>0$ for $r>0$ together
with the smoothness condition $U'(0)=0$, which fixes the Hamiltonian
at the pole and, by monotonicity, forces $H\equiv0$, a boundary
condition at a single point substitutes for global uniform
dissipation. The argument in fact only needs $W\ge0$ on the range of
$U$ and $U(\infty)$ a zero of $W$; we have stated it in the
double-well setting for consistency with this section. This is the
same elementary energy mechanism used in the hyperbolic radial case by
Birindelli--Mazzeo \cite[Prop.~2.1]{BirindelliMazzeo2009};
Theorem~\ref{thm:radial} isolates the two features actually needed,
$a>0$ on $(0,\infty)$ and the pole condition $U'(0)=0$, and thereby
covers every rotational surface of class~\textup{(B)}, where
$\inf a=0$ is the typical situation.

\begin{theorem}[Existence: the layer across a geodesic leaf,
reflection-symmetric case]\label{thm:existence}
In class \textup{(A)}, assume in addition that $f$ is \emph{even}, so
that $a=f'/f$ is odd and the leaf $\{t=0\}$ is a geodesic; together
with the convexity $f''\ge0$ of class~\textup{(A)}, this gives $a\ge0$
on $[0,\infty)$. Then \eqref{eq:ode-warped} admits an odd solution
$U\in C^3(\R)$ with
\[
  U(\pm\infty)=\pm1,\qquad |U|<1,\qquad U'>0\ \text{on}\ \R ,
\]
and $U\in C^\infty$ if $W\in C^\infty$. Consequently $u(t,s)=U(t)$ is
a bounded, nonconstant entire solution of \eqref{eq:AC} on $(\M,g)$
whose zero set is exactly the geodesic leaf $\{t=0\}$.
\end{theorem}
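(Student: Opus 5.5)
We construct the odd layer by shooting from the geodesic leaf. Since $a$ is odd and $W'$ is odd ($W$ even), the equation \eqref{eq:ode-warped} is invariant under $U(t)\mapsto -U(-t)$. It therefore suffices to solve on $[0,\infty)$ with $U(0)=0$, $U'(0)=p>0$, and extend oddly. The odd extension is $C^2$ because $U''(0)=W'(0)-a(0)p=0$: here $a(0)=0$ since $f$ is even, and $W'(0)=0$ by oddness. Since $W\in C^2$, the field $(U,V)\mapsto(V,W'(U)-a(t)V)$ is locally Lipschitz, so the Cauchy problem is well posed and solutions depend continuously on $p$. Moreover $U''=W'(U)-aU'$ is $C^1$, so $U\in C^3$; bootstrapping gives $C^\infty$ when $W\in C^\infty$.

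For each $p>0$ let $U_p$ be the solution and define the exit time
\[
T_p:=\sup\{t>0:\ 0<U_p<1 \text{ and } U_p'>0 \text{ on } (0,t)\}.
\]
On $(0,\infty)$ we have $a\ge0$, so $H=\tfrac12(U')^2-W(U)$ is nonincreasing. Partition $(0,\infty)$ into two sets.
\begin{itemize}
\item $A:=\{p: U_p \text{ reaches } 1 \text{ at } T_p<\infty \text{ with } U_p'(T_p)>0\}$.
\item $B:=\{p: U_p' \text{ vanishes at some } T_p<\infty \text{ with } U_p(T_p)\in(0,1)\}$.
\end{itemize}

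These sets are disjoint, and both are open by continuous dependence. For $A$ this is a transversal crossing. For $B$ we use that at a zero of $U'$ with $U\in(0,1)$ we have $U''=W'(U)<0$, so the zero is nondegenerate and persists under perturbation.

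Next we show both sets are nonempty. For $p$ small, $H(0)=\tfrac12p^2-W(0)<0$, and monotonicity of $H$ gives $H<0$ on $[0,T_p)$. Reaching $U=1$ would require $H=\tfrac12U'^2\ge0$ there, which is impossible. It remains to check that $U'$ must actually vanish in finite time. If $U$ increased forever to a limit $\ell\le1$, the sequential argument of Theorem~\ref{thm:main} gives $H(\infty)=-W(\ell)$, while $\ell$ must be a critical point of $W$ with $\ell\in(0,1]$. Since $W'<0$ on $(0,1)$, this forces $\ell=1$ and $H(\infty)=0$, contradicting $H<0$ decreasing. So small $p$ lies in $B$.

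For large $p$ we need $p\in A$. Here the obstacle is the friction $a$, which may grow. On any interval where $U'>0$ we have $(U')^2/2$ decreasing at rate at most $aU'^2+|W'|U'$. Fix $L$ with $\max_{[0,L]}a=:A_L$. A Gronwall-type estimate on $V=U'$ over $[0,L]$ gives $V'\ge -A_LV-C$, so $V\ge pe^{-A_Lt}-C'$. Choosing $p$ large then forces $U$ to travel distance $1$ before time $\min(L,\cdot)$, with $U'$ still positive. Hence large $p$ lies in $A$.

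By connectedness of $(0,\infty)$ there is $p^*\notin A\cup B$. We then show $U:=U_{p^*}$ is the layer. Since $p^*\notin A\cup B$ we get $T_{p^*}=\infty$: the remaining finite-$T$ scenarios are $U=1$ with $U'=0$, or $U'=0$ with $U\in\{0,1\}$, and Cauchy uniqueness against the constant solution $1$ excludes them. Thus $U$ is increasing and bounded by $1$, so it has a limit $\ell$. As above, $\ell$ is a critical point of $W$ in $(0,1]$, and $W'<0$ on $(0,1)$ forces $\ell=1$. Strict inequality $U<1$ holds by the definition of $T_p$. Strict monotonicity $U'>0$ holds because $U'$ never vanishes on $(0,\infty)$ and $U'(0)=p^*>0$; oddness extends all of this to $\mathbb R$. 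The zero set of $U$ is $\{0\}$, so the nodal set of $u$ is the leaf $\{t=0\}$.

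The main obstacle is the classification at the limiting shot $p^*$. The argument must rule out that $U'$ tends to $0$ while $U$ stalls below $1$, and must handle a possibly unbounded $a$. The stall is excluded by the hypothesis $W'<0$ on $(0,1)$ together with the limit-point argument from Corollary~\ref{cor:limits-exist}. That argument needs $U'$ bounded, which follows from $H$ being bounded, and $U''$ bounded locally; when $a$ is unbounded, I would instead identify the limit via $H(\infty)$ directly, as in Theorem~\ref{thm:main}.
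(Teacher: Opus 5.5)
You follow the paper's architecture: shoot from the geodesic leaf with $U(0)=0$, $U'(0)=p$, split $(0,\infty)$ into the open disjoint sets of slopes that reach $1$ transversally and slopes that turn around below $1$, then take a slope in neither. Your large-slope step via $V'\ge -A_LV-C$ on $[0,1]$ is valid. It is a slightly simpler substitute for the paper's dissipation estimate, and both use only $\max_{[0,1]}a<\infty$. There is, however, a genuine gap exactly where you locate the main obstacle: the claim that the limit $\ell$ of a shot with $U'>0$ on $(0,\infty)$ is a critical point of $W$.

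You justify this with the integration argument of Corollary~\ref{cor:limits-exist}. That argument needs $\int_t^{t+1}aU'\to0$, essentially $\sup a<\infty$, which fails in general for even $f$ in class (A). For example, $f(t)=e^{t^2}$ is even, positive and convex, with $a(t)=2t$. Your fallback, identifying $\ell$ through $H(\infty)=-W(\ell)$, does not supply the missing information. It gives $U'\to0$, but for any $\ell\in(0,1)$ the value $-W(\ell)$ is just a negative number. That is perfectly consistent with $H$ decreasing from $H(0)=\tfrac12p^2-W(0)<0$ in the small-$p$ case, or from $H(0)$ at $p^*$. So, as written, neither the nonemptiness of $B$ nor the conclusion $\ell=1$ at the critical slope is proved when $a$ is unbounded.

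The missing idea is to use the \emph{sign} of the friction rather than its size. Since $a\ge0$ on $[0,\infty)$ and $U'>0$, you have $U''=W'(U)-aU'\le W'(U)$. If $\ell\in(0,1)$, then $W'(U)\to W'(\ell)<0$, so $U''\le\tfrac12W'(\ell)$ for large $t$, and $U'$ must vanish in finite time, which contradicts $U'>0$. Equivalently, $\int_t^{t+1}W'(U)\ge U'(t+1)-U'(t)\to0$ gives $W'(\ell)\ge0$. This is the paper's argument. It needs no bound on $a$, and with it your proof closes.

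Two minor points. Existence up to the exit time should be recorded: on $(0,T_p)$, monotonicity of $H$ gives $\tfrac12U'^2\le H(0)+\max_{[0,1]}W$, so there is no blow-up. Also, the scenario $U'(T)=0$, $U(T)=0$ is excluded by monotonicity, or by uniqueness against the equilibrium $0$, not by uniqueness against the constant $1$.
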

\begin{proof}
For $\sigma>0$ let $U_\sigma$ be the maximal solution of
\eqref{eq:ode-warped} on $[0,\infty)$ with $U_\sigma(0)=0$,
$U_\sigma'(0)=\sigma$. We first record \emph{global existence while
$U_\sigma\in[0,1]$}: on any interval where $U_\sigma\in[0,1]$, $H$ is
nonincreasing (as $a\ge0$), and $0\le W(U_\sigma)\le W(0)$ by
\eqref{eq:W-assumptions}, so
$\tfrac12(U_\sigma')^2=H+W(U_\sigma)\le H(0)+W(0)=\tfrac{\sigma^2}2$;
thus $(U_\sigma,U_\sigma')$ stays bounded. Since the vector field
$(U,V)\mapsto(V,\,W'(U)-a(t)V)$ is locally Lipschitz in $(U,V)$ and
$a$ is continuous, boundedness on compact intervals prevents
finite-time blow-up; hence $U_\sigma$ extends for as long as
$U_\sigma\in[0,1]$, up to the first exit time from this strip. Define
\begin{align*}
  \mathcal A&:=\{\sigma>0:\ \exists\,t_1>0,\ U_\sigma(t_1)=1,\
  U_\sigma'>0 \text{ on }[0,t_1]\},\\
  \mathcal B&:=\{\sigma>0:\ \exists\,t_1>0,\ U_\sigma'(t_1)=0,\
  U_\sigma\in(0,1) \text{ on }(0,t_1],\ U_\sigma'>0 \text{ on }(0,t_1)\}.
\end{align*}
Thus $\mathcal A$ collects the slopes whose profile reaches the level
$1$ with positive speed, and $\mathcal B$ those whose profile turns
around strictly below it.

\emph{Structure of the argument (shooting method).} We show below
that $\mathcal A$ and $\mathcal B$ are open, disjoint and nonempty.
Since $(0,\infty)$ is connected, their union cannot exhaust it, so
there is a critical slope $\sigma^*$ in neither; its profile is the
desired heteroclinic on $[0,\infty)$, and the odd extension completes
the proof.

\emph{Disjoint and open.} Disjointness: whichever event occurs first
excludes the other; a simultaneous event $U_\sigma(t_1)=1$,
$U_\sigma'(t_1)=0$ would force, by uniqueness at the equilibrium
$(1,0)$ (note $W'(1)=0$), $U_\sigma\equiv1$, contradicting
$U_\sigma(0)=0$. Openness: both defining events are stable under
small perturbations of $\sigma$, by continuous dependence of
solutions of \eqref{eq:ode-warped} on the initial datum: in
$\mathcal A$, the level $1$ is crossed with $U_\sigma'(t_1)>0$; in
$\mathcal B$, the first critical point satisfies
$U_\sigma''(t_1)=W'(U_\sigma(t_1))<0$ strictly, by
\eqref{eq:W-assumptions}; in both cases nearby solutions exhibit the
same first event.

\emph{$\mathcal B$ contains all $\sigma$ with $\sigma^2<2W(0)$.} For
such $\sigma$, $H(0)=\tfrac{\sigma^2}2-W(0)<0$, and $H$ is
nonincreasing, so $H<0$ for all $t$ while $U_\sigma\in[0,1]$. The
level $1$ cannot be reached: there
$\tfrac12(U_\sigma')^2=H+W(1)=H<0$, impossible; moreover
$\tfrac12(U_\sigma')^2=H+W(U_\sigma)<W(U_\sigma)$ requires
$W(U_\sigma)\ge-H\ge-H(0)>0$, keeping $U_\sigma$ bounded away from
$1$ (where $W\to0$). Suppose, for contradiction, that $U_\sigma'>0$
for all $t>0$; then $U_\sigma\nearrow L$ with $L\in(0,1)$ by the
above. Since $a\ge0$ and $U_\sigma'>0$,
\[
  U_\sigma''=W'(U_\sigma)-a\,U_\sigma'\;\le\;W'(U_\sigma)
  \;\longrightarrow\;W'(L)<0 ,
\]
so $U_\sigma''\le\tfrac12W'(L)<0$ for all large $t$ (the limit being
strictly negative), forcing $U_\sigma'$ to reach $0$ in finite time,
a contradiction. Hence $U_\sigma'$ vanishes at a first time $t_1$,
with $U_\sigma(t_1)\in(0,1)$ ($U_\sigma$ increased from $0$ and
stayed below $1$): $\sigma\in\mathcal B$.

\emph{$\mathcal A$ contains all $\sigma>\sigma_0:=
\max\{\sqrt2,\ \sqrt{8W(0)},\ 8\sqrt2\,M_a\}$, where
$M_a:=\max_{t\in[0,1]}a(t)\ge0$} (finite and well-defined because
$a\in C^\infty$ on $\R$). Let $T$ be the first time at which either
$U_\sigma=1$ or $H=\sigma^2/4$. On $[0,T)$ we have $H\ge\sigma^2/4$
and $U_\sigma\in[0,1)$, so, using $W(U_\sigma)\ge0$ on $[0,1]$ by
\eqref{eq:W-assumptions},
\[
  \tfrac12(U_\sigma')^2=H+W(U_\sigma)\ \ge\ \tfrac{\sigma^2}4
  \quad\Longrightarrow\quad U_\sigma'\ \ge\ \tfrac{\sigma}{\sqrt2},
\]
and also, since $H\le H(0)$ and $W(U_\sigma)\le W(0)$ (as $W$ is
nonincreasing on $[0,1]$ by \eqref{eq:W-assumptions}),
$U_\sigma'\le\sigma$. From $U_\sigma'\ge\sigma/\sqrt2$ and
$U_\sigma(0)=0$, the level $1$ is reached by time $\sqrt2/\sigma$
unless the event $H=\sigma^2/4$ occurs first; in either case
\[
  T\ \le\ \frac{\sqrt2}{\sigma}\ <\ 1
  \qquad(\sigma>\sqrt2).
\]
Suppose the event at $T$ is $H(T)=\sigma^2/4$ with $U_\sigma(T)<1$.
The dissipated energy satisfies
\[
  \frac{\sigma^2}4-W(0)
  \;=\;H(0)-H(T)
  \;=\;\int_0^T a\,(U_\sigma')^2
  \;\le\;M_a\,\sigma^2\,T
  \;\le\;\sqrt2\,M_a\,\sigma ,
\]
using $T<1$ (so $[0,T]\subset[0,1]$ and hence $a\le M_a$ on $[0,T]$)
and $U_\sigma'\le\sigma$. For $\sigma^2\ge8W(0)$ the left-hand side
is at least $\sigma^2/8$, so $\sigma\le8\sqrt2\,M_a$, contradicting
$\sigma>\sigma_0$ (when $M_a=0$, the inequality
$\sigma^2/4\le W(0)$ contradicts $\sigma^2\ge8W(0)>0$). Hence the
first event is $U_\sigma=1$, with $U_\sigma'\ge\sigma/\sqrt2>0$
throughout: $\sigma\in\mathcal A$.

\emph{The critical parameter.} Since $(0,\infty)$ is connected and
$\mathcal A,\mathcal B$ are open, disjoint, and nonempty, there is
$\sigma^*\in(0,\infty)\setminus(\mathcal A\cup\mathcal B)$. Write
$U:=U_{\sigma^*}$ and
\[
  t^*:=\sup\{T>0:\ U'>0 \text{ on }(0,T] \text{ and } U<1 \text{ on }[0,T]\}
  \;\in\;(0,\infty],
\]
positive since $U'(0)=\sigma^*>0$; the solution exists up to $t^*$ by
the global-existence observation above. If $t^*<\infty$, then at
$t^*$ either $U'(t^*)=0$ or $U(t^*)=1$ (note $U(t^*)=0$ is
impossible, as $U$ is increasing on $(0,t^*)$ from $U(0)=0$). The
case $U'(t^*)=0$, $U(t^*)\in(0,1)$ puts $\sigma^*\in\mathcal B$; the
case $U(t^*)=1$, $U'(t^*)>0$ puts $\sigma^*\in\mathcal A$; the case
$U(t^*)=1$, $U'(t^*)=0$ forces $U\equiv1$ by uniqueness at the
equilibrium $(1,0)$, contradicting $U(0)=0$. All three are excluded,
so $t^*=\infty$: $U'>0$ and $0<U<1$ on $(0,\infty)$, and
$U\nearrow L\le1$. The value $L\in(0,1)$ is excluded exactly as in
the $\mathcal B$-step ($U''\le\tfrac12W'(L)<0$ for large $t$
contradicts $U'>0$ globally); hence $L=1$.

\emph{Full decay of the derivative.} By the mean value theorem there
are $\tau_m\to\infty$ with $U'(\tau_m)\to0$, so
$H(\tau_m)=\tfrac12U'(\tau_m)^2-W(U(\tau_m))\to-W(1)=0$; since $H$ is
monotone on $[0,\infty)$, $H(t)\to0$, and therefore
$\tfrac12U'(t)^2=H(t)+W(U(t))\to0$, i.e.\ $U'(t)\to0$ as
$t\to\infty$.

\emph{Odd extension and regularity.} Since $a$ is odd and $W'$ is odd
(\eqref{eq:W-assumptions}), the function $\widetilde U(t):=-U(-t)$
solves \eqref{eq:ode-warped} with the same initial data
$(0,\sigma^*)$; by uniqueness, the solution of \eqref{eq:ode-warped}
on $\R$ with data $(0,\sigma^*)$ is odd and coincides with $U$ on
$[0,\infty)$. The extension satisfies $U(\pm\infty)=\pm1$, $|U|<1$
(strict on $(0,\infty)$ as shown, on $(-\infty,0)$ by oddness, and
$U(0)=0$), and $U'>0$ ($U'(0)=\sigma^*$; $U'(t)=U'(-t)$ by oddness).
Standard ODE regularity (bootstrapping) then gives $U\in C^3$, and
$U\in C^\infty$ if $W\in C^\infty$.
\end{proof}

No curvature pinching and no bound on $a$ are required: the only
hypotheses are those of class~\textup{(A)} together with the
reflection symmetry of $f$. For $f(t)=\cosh t$ the theorem gives the
layer across a complete geodesic of $\Hy^2$; a monotone solution of
the same ODE is obtained variationally in
\cite[Prop.~2.4]{BirindelliMazzeo2009}, whose proof isolates
$\int_{-1}^1 W'=0$ as the additional property needed beyond the
standing double-well hypotheses; oddness here follows instead from
the symmetry of $a$ and $W'$ together with uniqueness for the Cauchy
problem.

\begin{remark}[Stability of the symmetric layer]
\label{rem:stability}
Let $u(t,s)=U(t)$ be the layer of Theorem~\ref{thm:existence} and
$J:=\Delta_g-W''(u)$ its linearized operator. Differentiating
$U''+aU'=W'(U)$ in $t$ gives the exact identity
\begin{equation}\label{eq:stability-identity}
J[U'] = -a'(t)\,U'(t), \qquad a' = -K-a^2,
\end{equation}
where $K=-f''/f$ is the Gauss curvature. Hence $a'\ge0$ on $\R$ is
equivalent to $K\le-a^2$, and in that case $v:=U'>0$ is a positive
supersolution of $L:=-J=-\Delta_g+W''(u)$, i.e.\ $Lv\ge0$. For
$\varphi\in C_c^\infty(\mathcal M)$, write $\varphi=v\psi$; the
ground-state substitution gives, for every such $\varphi$,
\begin{align*}
Q(\varphi)&:=\int_{\mathcal M}\bigl(|\nabla\varphi|^2+W''(u)\varphi^2\bigr)\,d\mu_g=\int_{\mathcal M}v^2|\nabla\psi|^2\,d\mu_g+\int_{\mathcal M}\frac{Lv}{v}\,\varphi^2\,d\mu_g
\;\ge\;0,
\end{align*}
the standard Allegretto--Piepenbrink mechanism (cf.\
\cite{FischerColbrieSchoen1980,PigolaRigoliSetti2008} for the broader
context); the argument is self-contained and does not require the
supersolution to lie in $L^2$. This gives nonnegativity of $Q$; an
$L^2$-normalized ground state would require decay estimates not
pursued here.

Geometrically, $K\le-a^2$ says the ambient curvature is at least as
negative as the squared mean curvature of the leaves; it holds
automatically when $K\le-\sup a^2$, and in particular for
$f(t)=\cosh t$, where $a=\tanh t$ and $a'=\operatorname{sech}^2t>0$.
When $a'$ changes sign the preceding argument gives no information.
The corresponding stability and Morse-index questions, for the
warping $f(t)=1+t^2$, are discussed in the Open problems subsection
below. This remark concerns class \textup{(A)}; the extension to
general Busemann profiles is not treated here.
\end{remark}

Within the warped-line class the results above establish: if $f$ is
nondecreasing and nonconstant, no equal-level profile exists
(Theorem~\ref{thm:translational}); this does not require the absence
of geodesic leaves, since $f'$ may vanish on a half-line, and every
leaf with $f'(t)=0$ is then geodesic. If $f$ is even and convex, the
central leaf $\{t=0\}$ is geodesic and reflection-symmetric, and
Theorem~\ref{thm:existence} produces an odd, strictly monotone
two-well layer with nodal set that leaf. In the rotational class,
whose leaves are never geodesics, there are no radial two-well
profiles (Theorem~\ref{thm:radial}).

These are partial results, not an ``if and only if'' criterion; the
case of a convex warping with an isolated interior minimum but no
reflection symmetry is recorded in the Open problems subsection below.

\section{Concluding remarks and open problems}\label{sec:remarks}

\begin{remark}[Energy of the symmetric layers]\label{rem:energy}
For the layers of Theorem~\ref{thm:existence}, the natural reduced
functional is the \emph{energy per unit orbit parameter},
\[
\mathcal E[U]
:=
\int_{\mathbb R}
\left(\tfrac12(U')^2+W(U)\right)f(t)\,dt,
\]
obtained by formally factoring the $s$-integration out of the
Allen--Cahn energy. Since the $s$-orbits have infinite parameter
length and the reduced integrand is nonnegative and not identically
zero, the total energy of the nonconstant layer $u(t,s)=U(t)$ on
$\mathcal M$ is infinite.

The quantity $\mathcal E[U]$ is not an energy per unit leaf length in
any uniform sense, since the length element on the leaf
$\{t=\mathrm{const}\}$ is $f(t)\,ds$ and therefore varies with $t$. We
make no claim that $\mathcal E[U]<\infty$ in general; this depends on
quantitative decay of $1-|U|$ and $U'$ relative to the growth of $f$,
estimates that are not needed here.
\end{remark}

\subsection*{Open problems}\label{subsec:open}
\begin{enumerate}

\item \emph{Beyond the ansatz.} Classify the bounded entire solutions
of \eqref{eq:AC} on a Cartan--Hadamard manifold with $K\le-\kappa_2<0$
whose asymptotic behaviour is prescribed through a Busemann function,
say $u\to\ell_\pm$ as $b\to\pm\infty$, uniformly on horospheres in the
sense $\sup_{x\in\Sigma_t}|u(x)-\ell_\pm|\to0$, \emph{without} assuming
$u=U\circ b$. This would be the variable-curvature analogue of the
\emph{symmetry} theorems of Birindelli--Mazzeo
\cite{BirindelliMazzeo2009}, which force certain solutions on $\Hy^n$
with invariant asymptotic data to be invariant, and which rely on the
isometry group, unavailable here. The convex-barrier technique of
\cite{CavalcanteEspinarMarin2026a}, developed for the asymptotic
Dirichlet problem and likewise independent of any isometry group, may
offer a route to such a classification; the precise relation between
their boundary data and the horospherical limits $\ell_\pm$ used here
remains to be worked out. Theorem~\ref{thm:main} and
Remark~\ref{rem:consistency} suggest, but do not prove, that under
equal-level data such solutions should be constant.

\item \emph{Asymmetric geodesic leaves.} Existence of a two-well layer
across the geodesic leaf of a convex warping with an isolated
interior minimum but without evenness. The natural route is
constrained minimization of $\mathcal E$ with a pinning mechanism
confining minimizing transitions near the minimum of $f$; this is the
subject of work in preparation and is not resolved by the shooting
method of Theorem~\ref{thm:existence}, which relies on reflection
symmetry to collapse two one-sided shooting parameters into one.

\item \emph{Manifolds with constant horospherical mean curvature.}
Classify the Cartan--Hadamard manifolds for which $\Delta b\equiv c$
for some, or every, Busemann function, and determine the possible
values of $c$ in terms of the curvature and the geometry of the ideal
boundary. This is the geometric question underlying the compatibility
condition of Proposition~\ref{prop:compatibility-front}: for fixed
$W$, well-to-well fronts exist exactly when $c=\tau^*(W')$. The
hypothesis holds for hyperbolic spaces, harmonic manifolds, and
Damek--Ricci spaces (Remark~\ref{rem:constant-drift-geometry}), and a
number of classification results are known in the homogeneous
setting; a classification under the general hypotheses of the present
paper, without homogeneity, and for a single ideal point rather than
every Busemann function, is beyond its scope.

\item \emph{Morse index when $a'$ changes sign.} For warpings such as
$f(t)=1+t^2$ the condition $a'\ge0$ fails, since
$a'(t)=2(1-t^2)/(1+t^2)^2$, and the identity $J[U']=-a'U'$ no longer
supplies a positive supersolution with a fixed sign. Although
$K(t)=-2/(1+t^2)\to0$, this alone does not determine the asymptotic
spectral behaviour of $J$: in the global warped coordinates,
\[
J=\partial_t^2+\frac{2t}{1+t^2}\partial_t
+\frac{1}{(1+t^2)^2}\partial_s^2-W''(U(t)),
\]
whose coefficients do not converge to those of the Euclidean layer
operator $\partial_t^2+\partial_s^2-W''(\pm1)$, the tangential
coefficient $1/(1+t^2)^2$ vanishes rather than tending to $1$.
Determining stability or Morse index in this regime requires a
separate spectral analysis, which we do not pursue here.

\item \emph{Problem A (unbalanced layers with variable friction).}
\label{item:problem-A-prime}
In the regime of Proposition~\ref{prop:dictionary},
Theorem~\ref{thm:main} precludes nonconstant profiles with
equal-level limits, so the heterogeneous regime has content only for
unbalanced data, $W(\ell_-)<W(\ell_+)$, where the strict climbing
inequality leaves room for nonconstant profiles. The equation is
\[
U''+\alpha(t)\,U'=W'(U),\qquad
\alpha(\R)\subset\bigl[(n-1)\sqrt{\kappa_2},\,(n-1)\sqrt{\kappa_1}\bigr].
\]
In the autonomous case $\alpha\equiv c$, uniqueness of the
Fife--McLeod speed confines existence to the single value
$c=\tau^*(W')$ (Proposition~\ref{prop:compatibility-front}). The
classical nonautonomous reductions on $\Hy^n$, $\alpha(r)=(n-1)\coth r$
for radial solutions and $\alpha(t)=(n-1)\tanh t$ for
$H_{\mathrm h}$-invariant ones \cite[\S2]{BirindelliMazzeo2009},
illustrate the general mechanism of \eqref{eq:nonauto-main} but do
\emph{not} lie in the bounded regime above: $\coth r\to\infty$ as
$r\to0^+$, so $\alpha$ violates the upper bound
$(n-1)\sqrt{\kappa_1}$, while $\tanh t\to0$ at $t=0$, so $\alpha$
violates the lower bound $(n-1)\sqrt{\kappa_2}>0$. We do not address
the structure of the admissible set of bounded drift profiles
$\alpha$; determining which such profiles admit a monotone
heteroclinic joining the wells is a natural open direction.
\end{enumerate}

\subsection*{Acknowledgements}

The authors thank Professor Stefano Nardulli for motivating and
encouraging them to work on this problem. The first ideas for this
work emerged from his invitation to participate, at UFABC, Brazil, in
July 2023, in activities associated with FAPESP project
No.~2021/05256-0, ``Problemas variacionais geom\'etricos: exist\^encia,
regularidade e caracteriza\c{c}\~ao geom\'etrica de solu\c{c}\~oes''.

\end{document}